\newtheorem{theorem}{Theorem}
\theoremstyle{plain}
\newtheorem{corollary}{Corollary}
\newtheorem{definition}{Definition}
\newtheorem{example}{Example}
\newtheorem{lemma}{Lemma}
\numberwithin{equation}{section}
\numberwithin{theorem}{section}  
\numberwithin{proposition}{section}  
\numberwithin{lemma}{section}  
\numberwithin{corollary}{section}
\begin{document}
\title[A Cylinder Moving in a Kinetic Gas]{Approach to Equilibrium of a Body
Colliding Specularly and Diffusely with a Sea of Particles}
\author{Xuwen Chen}
\address{Department of Mathematics, Brown University, Providence, RI 02912}
\email{chenxuwen@math.brown.edu}
\urladdr{http://www.math.brown.edu/\symbol{126}chenxuwen/}
\author{Walter Strauss}
\address{Department of Mathematics and Lefschetz Center for Dynamical
Systems, Brown University, Providence, RI 02912}
\email{wstrauss@math.brown.edu}
\urladdr{http://www.math.brown.edu/\symbol{126}wstrauss/}
\date{May 10, 2013}
\subjclass[2010]{70F45, 35R35, 35Q83, 70F40}
\keywords{free boundary, kinetic theory, diffusive reflection}

\begin{abstract}
We consider a rigid body acted upon by two forces, a constant force and the
collective force of interaction with a continuum of particles. We assume
that some of the particles that collide with the body reflect elastically
(specularly), while others reflect probabilistically with some probablility
distribution $K$. We find that the rate of approach of the body to
equilibrium is $O(t^{-3-p})$ in three dimensions where $p$ can take any
value from 0 to 2, depending on $K$.
\end{abstract}

\maketitle
\tableofcontents

\section{Introduction}

The problem that we are considering has a free boundary, the location of the
body. The other unknown is the configuration of the particles. The particles
may collide with the body elastically or inelastically. Boundary
interactions in kinetic theory are very poorly understood, even when the
boundaries are fixed. Free boundaries are even more difficult. For this
reason we have chosen to consider only the \textit{simplest} problem of this
type, namely, we assume the particles are identical and are rarefied, that
is, do not interact among themselves but only with the body. We assume that
the whole system, consisting of the body and the particles, starts out
rather close to an equilibrium state.

We consider classical particles that are extremely numerous. While one could
consider modeling them as a fluid, we instead model them as a continuum like
in kinetic (Boltzmann, Vlasov) theory \cite{Glassey} but without any
self-interaction. Our focus is on the interaction of the particles with the
body at its boundary. In typical physical scenarios this interaction is
poorly understood. For instance, the boundary may be so rough that a
particle may reflect from it in an essentially random way. There could even
be some kind of physical or chemical reaction between the particle and the
molecules of the body.

The present paper treats a problem similar to the series of remarkable
papers \cite{Ital1, Ital2, Ital3} and uses similar methods. In each of these
papers the initial velocity $V_0$ of the body is near its terminal
(equilibrium) velocity $V_\infty$ and the body moves in only one spatial
direction (to the right). In \cite{Ital1} and \cite{Ital2} all the
collisions are purely specular. The body's initial velocity satisfies $%
V_0<V_\infty$ in \cite{Ital1}, while $V_0>V_\infty$ in \cite{Ital2}. The
latter case is significantly different. In \cite{Ital3} the collisions are
purely diffusive with the collision kernel $K(\mathbf{v},\mathbf{u}) =
C|u_x| e^{-\beta |\mathbf{v}|^2}$ where $C$ and $\beta$ are constants. This
kernel implies that all the colliding particles are emitted with the same
Maxwellian distribution. In the present paper we generalize the boundary
behavior to permit a mixture of specular and diffusive reflections. The
diffusion part is much more general than in \cite{Ital3}.

In \cite{Ital1} and \cite{Ital2} the rate of approach of the velocity to
equilibrium is $O(t^{-d-2})$ in $d$ spatial dimensions. At first glance it
is somewhat surprising that the rate is slower than exponential. This
relatively slow rate is due to some particles colliding with the body
multiple times over long time periods, which produces a frictional effect on
the body that may be called a \textit{long tail memory}. In \cite{Ital3} the
rate is slower, namely, $O(t^{-d-1})$, because the number of collisions is
greater due to the diffuse reflections. In the present paper we find various
rates of approach depending on the specific law of reflection. We find the
rate $O(t^{-d-p})$ where $p$ can take any value from 0 to 2.

In physically realistic situations, many more effects must be included, such
as thermal effects, collisions among the particles themselves, or
electromagnetic effects. In a plasma the particles are usually modeled
kinetically, as for instance the reentry of a space vehicle into the
atmosphere. Another way to model particles that interact with a body would
be to treat them as a classical fluid. For a general discussion on
fluid-structure interaction, see \cite{Piston1}. Somewhat related to this
paper is the piston problem, where the body is a piston moving back and
forth in a finite channel \cite{Piston2} and naturally reaching an
equilibrium state. However, the piston problem is different primarily
because the particles reflect at the ends of the channel and collide an
infinite number of times, rather than scattering to infinity. More relevant
to this paper are the numerical computations in \cite{ATC, TA}, which
corroborate the power-law asymptotic behavior for the diffuse boundary
conditions of \cite{Ital3}. In \cite{Cav} a general convex body, moving
horizontally, is considered, and the results are similar to \cite{Ital1}.

To be specific, here we consider the following problem. The body is a
cylinder $\Omega (t)\subset \mathbb{R}^{d}$. We write $\mathbf{x}%
=(x,x_{\perp }),\ x_{\perp }\in \mathbb{R}^{d-1}$. The cylinder is parallel
to the $x$-axis and the body is constrained to move only in the $x$
direction with velocity $V(t)$. There is a constant horizontal force $E>0$
acting on the body, as well as the horizontal force $F(t)$ due to all the
colliding particles at time $t$. Thus 
\begin{equation*}
\frac{dX}{dt}=V(t),\quad \frac{dV}{dt}=E-F(t),
\end{equation*}%
In the fictitious situation that none of the particles collide more than
once with the body, their collective force on the body is denoted as $%
F_{0}(V)$. (See Lemma \ref{fictitious lemma}.) Then the equilibrium velocity
would be $V_{\infty }$, where $F_{0}(V_{\infty })=E$.

We write the velocity of a particle as $\mathbf{v}=(v_{x},v_{\perp })$,
where $v_{x}=\mathbf{v}\cdot \mathbf{i}$ is the horizontal component and $%
v_{\perp }\in \mathbb{R}^{d-1}$. The particle distribution, denoted by $f(t,%
\mathbf{x},\mathbf{v})$, satisfies $\partial _{t}f+\mathbf{v}\cdot \nabla _{%
\mathbf{x}}f=0$ in $\Omega ^{c}(t)$. We assume the initial velocity $f(0,%
\mathbf{x},\mathbf{v})=f_{0}(\mathbf{v})$ depends only on $\mathbf{v}$ and
is even in $v_{x}$. We also denote the densities before and after a
collision with the body by $f_{\pm }(t,\mathbf{x},\mathbf{v})=\lim_{\epsilon
\rightarrow 0^{+}}f(t\pm \epsilon ,\mathbf{x}\pm \epsilon \mathbf{v},\mathbf{%
v})$. The assumed law of reflection at the two ends of the cylinder is 
\begin{equation*}
f_{+}(t,\mathbf{x};\mathbf{v})=\alpha f_{-}(t,\mathbf{x},2V(t)-v_{x},v_{\bot
})+(1-\alpha )\int_{(u_{x}-V\left( t\right) )(v_{x}-V\left( t\right) )\leq
0}K\left( \mathbf{v}-\mathbf{i}V\left( t\right) ;\mathbf{u}-\mathbf{i}%
V\left( t\right) \right) f_{-}(t,\mathbf{x};\mathbf{u})d\mathbf{u},
\end{equation*}%
where $\mathbf{i}$ is the unit vector in the $x$-direction and $\alpha \in %
\left[ 0,1\right) $. The collision kernel $K$ is assumed to satisfy the
conservation of mass condition \eqref{kernelmass}. Furthermore, $K$ and the
initial density $f_{0}$ satisfy Assumptions A1-A5 in Section 3. Among these
conditions are 
\begin{equation*}
K(\mathbf{v,u})=k(v_{x},u_{x})b(v_{\bot }),\quad c\left\vert
u_{x}\right\vert ^{p}\leqslant \ \int_{v_{x}\geq 0}\ v_{x}^{2}\
k(v_{x},u_{x})\ dv_{x}\ \leqslant C\left\vert u_{x}\right\vert ^{p}
\end{equation*}%
for some constants $c,C,p$ and some function $b(v_{\perp })$ where $0\leq
p\leq 2$. A symmetry assumption implies that the net force on the lateral
boundary vanishes (Lemma \ref{lateral force}).

\begin{theorem}
\label{ThExistence} Given a collision kernel and the initial data $f_{0}$ as
above. If $\gamma =V_{\infty }-V_{0}$ is sufficiently small and positive,
then there exists a solution $(V(t),f(t,x,v))$ of our problem in the
following sense. $V\in C^{1}(\mathbb{R})$ and $f_{\pm }\in L^{\infty }$ for $%
t\in \lbrack 0,\infty ),x\in \partial \Omega (t),v\in \mathbb{R}^{3}$, where
the force $F(t)$ on the cylinder is given by \eqref{force:used formula} 
and the pair of functions $f_{\pm }(t,x,v)$ are (almost everywhere) defined
explicitly in terms of $V(t)$ and $f_{0}(x,v)$.
\end{theorem}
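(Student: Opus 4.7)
The strategy is a Picard-type fixed-point argument for the trajectory $V(t)$. Given any candidate $V$ in a complete metric space $\mathcal{X}$ of velocity profiles on $[0,\infty)$ staying close to $V_\infty$, the free transport equation $\partial_t f+\mathbf{v}\cdot\nabla_\mathbf{x} f=0$ in $\Omega^c(t)$ can be solved explicitly by the method of characteristics coupled to the reflection law. For each $(t,\mathbf{x},\mathbf{v})$ with $\mathbf{x}\in\partial\Omega(t)$, one traces the straight-line characteristic backwards in time; whenever it meets $\partial\Omega(s)$, the reflection identity expresses $f_+$ in terms of $f_-$ at an earlier time through both the specular shift $(v_x\mapsto 2V(s)-v_x)$ and the diffuse integral against $K$. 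Iterating this relation represents $f_\pm$ as a formal series indexed by finite collision histories, with each term built purely out of $V$ and the initial datum $f_0$. Substituting into \eqref{force:used formula} yields a nonlinear functional $F[V](t)$, and one defines $(\mathcal{T}V)(t)=V_0+\int_0^t(E-F[V](s))\,ds$; a fixed point of $\mathcal{T}$ in $\mathcal{X}$ is a solution of the coupled system.

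\textbf{Choice of function space.} Because $F_0(V_\infty)=E$ by Lemma~\ref{fictitious lemma}, the linearization of the one-collision force at equilibrium shows that $E-F_0(V)$ has definite sign in terms of $V_\infty-V$. A natural closed set to iterate on is
\[
\mathcal{X}=\{V\in C([0,\infty)):\ 0\le V_\infty-V(t)\le\gamma\ \text{for all }t\ge 0,\ V(0)=V_0\},
\]
endowed with the uniform metric and (if needed for the later quantitative decay results) a time weight. Smallness of $\gamma$ ensures that $\mathcal{T}$ preserves $\mathcal{X}$ and is contractive: the strict inequality $V(t)<V_\infty$ prevents overshooting and keeps the sign conditions that underlie the one-collision picture, so that the dominant restoring term $-F_0'(V_\infty)(V_\infty-V)$ controls the higher-order corrections.

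\textbf{Main obstacle.} The serious difficulty is controlling the particles that collide many times with the body. Even in the purely specular case \cite{Ital1,Ital2} the re-collisions generate the long memory tail; here every diffuse collision emits a fresh $K$-distributed population, each member of which may again return to $\partial\Omega$ and be re-emitted, producing an infinite cascade of generations. One must prove that the series representing $f_\pm$ converges in $L^\infty$ uniformly in $V\in\mathcal{X}$, and that $F[V]$ is Lipschitz in $V$ with a constant that shrinks as $\gamma\to 0$. The analytic ingredients are the kernel factorisation $K(\mathbf{v},\mathbf{u})=k(v_x,u_x)b(v_\perp)$, the mass conservation \eqref{kernelmass}, and the moment bound $\int_{v_x\ge 0}v_x^2\,k(v_x,u_x)\,dv_x\le C|u_x|^p$, which jointly control the horizontal momentum transferred by each generation; combined with the gap $V_\infty-V(t)>0$, which prevents slow re-emitted particles from catching up with the body immediately, they give summable bounds on the collision series. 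Once these estimates are in place, the Banach fixed-point theorem yields a $V\in\mathcal{X}$, the explicit formulas define $f_\pm$ almost everywhere, and differentiating the identity $V(t)=(\mathcal{T}V)(t)$ gives $V\in C^1(\mathbb{R})$ together with the asserted $L^\infty$ regularity of $f_\pm$.
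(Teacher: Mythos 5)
Your plan is structurally close to the paper's but diverges at two critical points, one of which is a genuine gap.

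\textbf{The contraction claim is unwarranted.} You assert that $\mathcal{T}$ is contractive on $\mathcal{X}$ and invoke the Banach fixed-point theorem. If this were provable, uniqueness of solutions would follow immediately; but the paper explicitly remarks, right after Theorem~\ref{ThExistence}, that uniqueness is an open problem (as in the earlier works it cites). This is a strong signal that no one knows how to show the Lipschitz constant of $W\mapsto R_W$ is less than $1$. The paper instead uses the Schauder fixed-point theorem: it works on a ball $\mathcal{K}$ of Lipschitz functions (compact and convex in $C_b([0,\infty))$), proves that the map $\mathcal{A}:W\mapsto V_W$ sends $\mathcal{K}$ to itself, and proves \emph{mere continuity} of $\mathcal{A}$ by a delicate finite-collision-number truncation (Lemma~\ref{Lemma:continuity of the map}). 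Your proposal skips the compactness setup and replaces the hard continuity lemma with an unproved (and probably unprovable) contraction estimate, so the core analytic step is missing.

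\textbf{The ambient set and the map are not quite right.} Your set $\mathcal{X}=\{0\le V_\infty-V\le\gamma\}$ carries no decay information and no Lipschitz bound, so it is not compact and does not encode what is actually needed to close the argument. The paper's family $\mathcal{W}$ imposes the two-sided decaying bound $\gamma h(t)\le V_\infty-W(t)<\gamma g(t)$ with $g,h$ chosen so that $g\sim t^{-(d+p)}$; the whole point of Lemma~\ref{Prop:DeducingConditionsOnhAndg} and Corollary~\ref{requirements} is to verify that this class is invariant under the iteration. Relatedly, you define $\mathcal{T}V=V_0+\int_0^t(E-F[V])$, evaluating the full force (including $F_0$) at the old iterate. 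The paper instead solves the nonlinear ODE $\dot V_W=E-F_0(V_W)-R_W(t)$, evaluating $F_0$ at the \emph{new} unknown and treating only the precollision remainder $R_W$ as input. This split is what produces the exponentially contracting factor $e^{-\int Q}$ in \eqref{equation:V_inf-V_w} and hence the decay built into $g$ and $h$. Your version of the map does not enjoy this linear stability and would not preserve a decaying class.

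Your identification of the multi-collision cascade as the main obstacle is correct, and your intuition that one should sum over collision generations is exactly what the continuity lemma does. But the way you propose to exploit it — to get a small Lipschitz constant — overreaches; the correct conclusion is merely continuity, and one must trade Banach for Schauder together with the compactness coming from uniform Lipschitz bounds on the velocity profiles.
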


Uniqueness is an open problem, as in \cite{Ital1, Ital2, Ital3}.

\begin{theorem}
\label{ThEvery} Every solution of the problem (in the sense stated above)
satisfies the estimates 
\begin{equation}
\gamma e^{-B_{\infty }t}+\frac{c\gamma ^{p+1}\chi _{(2t_{0},\infty )}\left(
t\right) }{t^{d+p}}\leqslant V_{\infty }-V(t)\leqslant \gamma e^{-B_{0}t}+%
\frac{C\gamma ^{p+1}}{(1+t)^{d+p}},
\end{equation}%
where $B_{0}=\min_{V\in \left[ V_{0},V_{\infty }\right] }F_{0}^{\prime
}(V)>0\ \ $and$\ \ B_{\infty }=\max_{V\in \left[ V_{0},V_{\infty }\right]
}F_{0}^{\prime }(V)<\infty $, for some $t_{0}$ depending on $\gamma $ and
some positive constants $c,C$.
\end{theorem}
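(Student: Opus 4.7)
The plan is to reduce the estimate on $W(t):=V_{\infty}-V(t)$ to a one-dimensional linear ODE driven by a nonnegative memory source, and then to bound that source sharply above and below. Using the fictitious-force lemma I would write $F(t)=F_{0}(V(t))+M(t)$, where $M(t)$ collects the drag contributed by particles that have collided with the body at least twice. Since $F_{0}(V_{\infty})=E$ and $V(t)\in[V_{0},V_{\infty}]$, the mean value theorem yields
\begin{equation*}
W'(t)=F(t)-E=-B(t)\,W(t)+M(t),\qquad B_{0}\leq B(t)\leq B_{\infty},
\end{equation*}
and Duhamel's formula gives
\begin{equation*}
W(t)=\gamma\,e^{-\int_{0}^{t}B(s)\,ds}+\int_{0}^{t}e^{-\int_{s}^{t}B(\tau)\,d\tau}\,M(s)\,ds.
\end{equation*}
The bracketing of $B(t)$ together with $M\geq 0$ produces the exponential factors $\gamma e^{-B_{\infty}t}$ and $\gamma e^{-B_{0}t}$ in the lower and upper bounds respectively.

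The heart of the argument is to establish the pointwise two-sided estimate
\begin{equation*}
\frac{c\,\gamma^{p+1}}{(1+s)^{d+p}}\,\chi_{[t_{0},\infty)}(s)\;\leq\;M(s)\;\leq\;\frac{C\,\gamma^{p+1}}{(1+s)^{d+p}},
\end{equation*}
after which a Laplace-type concentration of the Duhamel integral near $s=t$ produces the polynomial parts of both bounds on $W$. To establish these bounds on $M$, I would insert the explicit representation of $f_{\pm}$ from Theorem~\ref{ThExistence} into the force formula \eqref{force:used formula} and decompose the resulting integral as a sum over recolliding trajectories indexed by the time $\tau<s$ of the previous collision. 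Integration over the transverse velocity $v_{\perp}\in\mathbb{R}^{d-1}$ subject to the geometric return condition supplies a factor $(s-\tau)^{-(d-1)}$; the smallness of the body-particle relative velocity window (of size $\gamma$) together with the hypothesis $c|u_{x}|^{p}\leq\int v_{x}^{2}k(v_{x},u_{x})\,dv_{x}\leq C|u_{x}|^{p}$ on the diffusive kernel produces the remaining scaling $\gamma^{p+1}(s-\tau)^{-(p+1)}$. The specular piece, of order $\gamma^{3}(s-\tau)^{-(d+2)}$, is dominated by the diffusive piece whenever $p<2$.

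Combining these ingredients, the polynomial upper bound on $W(t)$ follows by splitting the Duhamel integral at $s=t/2$: on $[0,t/2]$ the exponential factor $e^{-B_{0}t/2}$ beats any polynomial, while on $[t/2,t]$ the memory term is of order $\gamma^{p+1}/(1+t)^{d+p}$ and the exponential kernel integrates to $O(1/B_{0})$. The polynomial lower bound is obtained analogously from the pointwise lower bound on $M$ over an $O(1)$-window ending at $s=t$, which is available once $t>2t_{0}$, so that at least one full recollision cycle has had time to occur. The principal obstacle is the \emph{lower} bound on $M(s)$, since one must exhibit a concrete cohort of twice-colliding trajectories whose contribution to the memory force has definite sign and cannot be cancelled by any other cohort. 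This I expect to accomplish by linearizing the characteristic flow around the frozen equilibrium $V\equiv V_{\infty}$, using the smallness of $\gamma$ to control the linearization error, and invoking the positivity and symmetry built into Assumptions~A1--A5 (notably the evenness of $f_{0}$ in $v_{x}$ and the nonnegativity of $k$ and $b$) to preclude cancellation between the specular, diffusive, and higher-order multi-collision contributions.
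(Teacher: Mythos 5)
Your Duhamel reduction $W(t)=\gamma e^{-\int_0^t Q}+\int_0^t e^{-\int_s^t Q}R_W(s)\,ds$ with $B_0\le Q\le B_\infty$ matches the paper's Lemma \ref{Prop:DeducingConditionsOnhAndg} exactly, and the split of the Duhamel integral at $s=t/2$ is the same device the paper uses there. The gap is in how you propose to justify the pointwise two-sided estimate on the memory term.

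The claimed bound $M(s)\le C\gamma^{p+1}/(1+s)^{d+p}$ cannot be established directly from the explicit representation of $f_{\pm}$: the memory force at time $s$ depends on the full history $V|_{[0,s]}$, and the relevant velocity window on the right end is $[\langle V\rangle_s,\,V(s)]$ of width $V(s)-\langle V\rangle_s$, not a window of size $\gamma/(s-\tau)$ as your scaling suggests. Without a priori knowledge that $V_\infty-V$ decays, one only has $V(s)-\langle V\rangle_s\le\gamma$ and the contribution with $\tau\in[s/2,s]$ (where the transverse gain $(s-\tau)^{-(d-1)}$ is absent) gives only $M(s)\le C\gamma^{p+1}$, uniformly in $s$. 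Fed into Duhamel this yields $W(t)\le\gamma e^{-B_0t}+C\gamma^{p+1}$, which is not the claimed polynomial decay. The paper's Lemmas \ref{Lemma:Upper and lower bound of R+}--\ref{Corollary:UpperAndLowerBoundOfR} bound $R_W$ \emph{conditionally} on $W$ lying in a family $\mathcal{W}$ with prescribed decay functions $g,h$, and the proof of Theorem \ref{ThEvery} then closes the loop with a continuation argument: one lets $T=\inf\{s: \gamma h<V_\infty-V<\gamma g \text{ fails}\}$, observes that on $(0,T)$ the conditional estimates yield \emph{strictly} better bounds (\eqref{UPPER},\eqref{LOWER}), and concludes $T=\infty$ by contradiction. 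Your proposal lacks this bootstrap, and without it the pointwise bound on $M(s)$ is circular.

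Two further remarks. For the lower bound on $M$, the paper does not linearize the characteristic flow around $V\equiv V_\infty$; it uses Assumption A5 directly to obtain a uniform pointwise lower bound $f_+(\tau,\xi;\mathbf{u})\ge(a_0(V_\infty)+\delta)\,b(u_\perp)$, so that $f_- - f_0 \ge (\delta/2)\,b(u_\perp)$ on the recolliding set by continuity of $a_0$. This is both simpler and more robust than the linearization you propose, which would require controlling errors of the same order $\gamma$ as the signal. Finally, your observation that the specular piece ($\sim u_x^2$) is dominated by the diffusive piece ($\sim|u_x|^p$) when $p<2$ for small $|u_x|$ is correct, but the paper absorbs both into the single function $\ell(u_x)=(1+\alpha)u_x^2+(1-\alpha)\int v_x^2 k\,dv_x$, satisfying $\ell(u_x)\le C|u_x|^p$ for $|u_x|\le\gamma$, which handles all $p\in[0,2]$ uniformly without case splitting.
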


In Section 2 we derive the basic formulas for the total force on a body due
to its interaction with the particles. This is done directly from basic
principles in a more organized way than in \cite{Ital3}. In terms of the
boundary conditions the total force is given in Lemma \ref{total force}. We
assume that particles are not created or annihilated at the boundary
(conservation of mass) (Lemmas \ref{mass} and \ref{lateral mass}). Under
either of two lateral boundary conditions, the force on the lateral side of
the body can be ignored (Lemmas \ref{lateral force} and \ref{alt lateral
force}).

Section 3 introduces a family $\mathcal{W}$ of possible body motions $W$, in
terms of two functions $g(t)$ and $h(t)$ which we determine later. We write
the force due to the possible motion $W$ as $F(t)=F_{0}(t)+R_{W}(t)$, where $%
R_{W}(t)$ is the force due to the collisions occurring before time $t$
(\textquotedblleft precollisions") if the body were to move with velocity $%
W(\cdot )$. Then $W$ generates a new possible motion $V_{W}$ by the equation 
\begin{equation}
\frac{dV_{W}}{dt}=E-F_{0}(t)-R_{W}(t).  \label{iteration equation}
\end{equation}%
The goal is to prove that the mapping $W\rightarrow V_{W}$ has a fixed
point. At the end of the section we list all the assumptions on the
collision kernel $K(\mathbf{v},\mathbf{u})$ and the initial state $f_{0}(%
\mathbf{v})$ of the particles, stated in as general a form as feasible.

In Section 4 we provide several examples of collision kernels for the two
ends of the cylinder. Example 1 is the same gaussian collision law $%
k(v_{x},u_{x})=C|u_{x}|\exp (-\beta v_{x}^{2})$ considered in \cite{Ital3},
where the particles comprise a perfect gas in thermal equilibrium. In that
paper the exponent $p=1$ and the authors assume that $V_{\infty }$ is
sufficiently large without specifying how large. We provide an explicit
condition \eqref{Vinfinity
large} on the size of $V_{\infty }$. We also provide an alternative
condition \eqref{beta small} on the shape of the gaussian that is
independent of $V_{\infty }$.

Example 2 is more interesting. The kernel is $k(v_x,u_x) = C\exp(-v_x^2 /
|u_x|)$ and the value of $p$ is $\frac32$. This means that, for a particle
colliding at an incoming velocity $u_x$ close to that of the body $V(t)$,
its outgoing velocity $v_x$ upon reflection is given by a narrow gaussian
and so is likely to be not very changed. Thus the particles that are almost
grazing are deviated only slightly. On the other hand, if $u_x$ is quite far
from $V(t)$, that is if the collision is more fierce, the particle's
velocity upon reflection is given by a very wide gaussian and so is likely
to take almost any value. It seems that this may be a more realistic
scattering scenario than the one in Example 1.

Example 3 proposes a family of kernels that generalize both previous
examples, permitting any $p\in[0,2]$. Example 4 shows that there is no
requirement that the kernel is an exponential; all that is needed is some
polynomial decay.

Section 5 is devoted to our main estimates on how the particles collectively
generate a force on the body. Because $E>0$, there is a difference between
the left and the right sides. The bounds on the force employ mainly the
first precollisions. The most important conclusion is that $R_W(t)\ge0$. 
The estimates are summarized in Lemma \ref{Corollary:UpperAndLowerBoundOfR}.

In Section 6 we apply the estimates of the force $R_{W}$ to the body's
motion using \eqref{iteration equation}. The functions $g(t)$ and $h(t)$ in
the definition of $\mathcal{W}$ can then be chosen. Finally in Section 7 we
deduce the existence theorem, Theorem \ref{ThExistence}, by a fixed point
argument that iterates the upper bound of $R_{W}(t)$, thus taking account of
all the precollisions. The asymptotic theorem, Theorem \ref{ThEvery}, valid
for any solution, follows easily.

\subsection{Acknowledgements}

We thank Kazuo Aoki for introducing the second author to the previous work
on this subject and for very generous discussions, Andong He for general
discussions, Constantine Dafermos for his advice on mechanics, and Mario
Pulvirenti for elaborating the proof of convergence in \cite{Ital3}.


\section{Force on the Body}

\subsection{Force and Flux}

In this section, we derive the formula of the force on the body assuming
conservation of mass. Let $\Omega (t)$ be the moving body at time $t$, $%
\Omega ^{c}(t)$ its complement (where the particles are located), and $%
\mathbf{v}=(v_{x},v_{\bot })$ the velocity of a particle. We normalize the
body's mass and the particle density to be 1. Then the mass (the total
number of particles) is 
\begin{equation*}
M(t) = \int_{\Omega ^{c}(t)}d\mathbf{x}\int_{{\mathbb{R}}^{3}}\ d\mathbf{v}%
f(t,\mathbf{x},\mathbf{v}).
\end{equation*}%
Conservation of mass means that the flux across the boundary vanishes; that
is, 
\begin{equation}
\int_{\partial \Omega (t)}dS_{\mathbf{x}}\int_{{\mathbb{R}}^{3}}d\mathbf{v}%
\left[ \left( \mathbf{v}-\mathbf{V}(t)\right) \cdot \mathbf{n}\right] f(t,%
\mathbf{x},\mathbf{v})=0,  \label{relation:conservation of particles}
\end{equation}%
where $\partial \Omega (t)$ is the boundary of $\Omega (t)$, $\mathbf{n} = 
\mathbf{n}_\mathbf{x}$ is the outward normal on $\partial \Omega (t)$, and $%
\mathbf{V}=(V_{x},V_{\bot })$ is the velocity of the body. In Section \ref%
{Sec:MassConservation} we will find boundary conditions so that %
\eqref{relation:conservation of particles} is valid.

\begin{lemma}
Assuming conservation of mass (\ref{relation:conservation of particles}),
the horizontal component of the force on the body is given by the formula 
\begin{equation}
F(t)=\int_{\partial \Omega (t)}dS_{\mathbf{x}}\int_{{\mathbb{R}}^{3}}d%
\mathbf{v}\left[ \left( \mathbf{v}-\mathbf{V}(t)\right) \cdot \mathbf{n}%
\right] \left( v_{x}-V_{x}\right) f(t,\mathbf{x},\mathbf{v}).
\label{equation:derivation of F(t)}
\end{equation}
\end{lemma}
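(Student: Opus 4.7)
The plan is to derive \eqref{equation:derivation of F(t)} from horizontal momentum balance. Let
\[
P_x(t) = \int_{\Omega^c(t)} d\mathbf{x}\int_{\mathbb{R}^3} d\mathbf{v}\; v_x\, f(t,\mathbf{x},\mathbf{v})
\]
denote the total horizontal momentum of the particles. Since the only external horizontal force on the combined body+particles system is the constant $E$, conservation of total horizontal momentum gives $\frac{d}{dt}\bigl(V(t)+P_x(t)\bigr)=E$ (with the normalization that the body has unit mass); combining this with $dV/dt=E-F(t)$ yields the identity $F(t)=dP_x/dt$. So it suffices to compute $dP_x/dt$ as a boundary integral over $\partial\Omega(t)$.

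I would first apply the Reynolds transport theorem to the moving region $\Omega^c(t)$, whose finite boundary $\partial\Omega(t)$ moves with velocity $\mathbf{V}(t)$ and carries outward normal $-\mathbf{n}$ (because $\mathbf{n}$ is outward to $\Omega$, not to $\Omega^c$). This produces
\[
\frac{dP_x}{dt} \;=\; \int_{\Omega^c(t)} d\mathbf{x}\int d\mathbf{v}\; v_x\, \partial_t f \;-\; \int_{\partial\Omega(t)} dS_\mathbf{x}\int d\mathbf{v}\; v_x\, (\mathbf{V}\cdot\mathbf{n})\, f.
\]
Next I would substitute the free-transport equation $\partial_t f = -\mathbf{v}\cdot\nabla_\mathbf{x} f$ valid on $\Omega^c(t)$, rewrite $v_x\,\mathbf{v}\cdot\nabla_\mathbf{x} f = \nabla_\mathbf{x}\cdot(v_x\mathbf{v}\, f)$ (using that $v_x$ and $\mathbf{v}$ do not depend on $\mathbf{x}$), and apply the divergence theorem. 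Assuming that $f$ decays fast enough at spatial infinity---a property inherited from the decay of $f_0$ built into the assumptions of Section~3---only the contribution on $\partial\Omega(t)$ survives, yielding
\[
\int_{\Omega^c(t)} d\mathbf{x}\int d\mathbf{v}\; v_x\, \partial_t f \;=\; \int_{\partial\Omega(t)} dS_\mathbf{x}\int d\mathbf{v}\; v_x\, (\mathbf{v}\cdot\mathbf{n})\, f.
\]

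Combining the two boundary integrals produces
\[
F(t) \;=\; \int_{\partial\Omega(t)} dS_\mathbf{x}\int d\mathbf{v}\; v_x\, [(\mathbf{v}-\mathbf{V})\cdot\mathbf{n}]\, f.
\]
To finish, I invoke the conservation-of-mass hypothesis \eqref{relation:conservation of particles}: the same double integral with $v_x$ replaced by $1$ vanishes. Multiplying that vanishing identity by $-V_x(t)$ and adding it to the previous display converts the factor $v_x$ into $v_x-V_x$, which is exactly \eqref{equation:derivation of F(t)}.

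The main obstacle is bookkeeping rather than deep analysis: keeping the outward normals for $\Omega$ versus $\Omega^c$ straight through the Reynolds and divergence-theorem steps, tracking the sign in the momentum-conservation identity that produces $F(t)=dP_x/dt$, and justifying via the decay of $f_0$ that the divergence theorem contributes nothing at spatial infinity. Everything beyond these points is a routine combination of transport and divergence identities.
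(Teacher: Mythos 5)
Your proposal is correct and follows essentially the same route as the paper's own proof: the momentum balance $F(t)=\frac{d}{dt}\int_{\Omega^c(t)}\int v_x f$, the Reynolds transport / moving-domain differentiation producing the $-(\mathbf{V}\cdot\mathbf{n})$ boundary term, the substitution of the transport equation followed by the divergence theorem giving the $+(\mathbf{v}\cdot\mathbf{n})$ boundary term, and finally the use of \eqref{relation:conservation of particles} to convert $v_x$ into $v_x-V_x$. Your explicit remark about the decay of $f$ at spatial infinity and the sign of the outward normal on $\Omega^c$ is a useful clarification that the paper leaves tacit.
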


\begin{proof}
The change of horizontal momentum of the gas and the moving solid together
is given by%
\begin{equation*}
\frac{d}{dt} \left\{ \int_{\Omega ^{c}(t)}d\mathbf{x}\int_{{\mathbb{R}}^{3}}d%
\mathbf{v}\ v_{x}f(t,\mathbf{x},\mathbf{v})+{V}\left( t\right) \right\} =E
\end{equation*}%
Hence $F(t)$ is 
\begin{eqnarray*}
F(t) &=&\frac{d}{dt}\int_{\Omega ^{c}(t)}d\mathbf{x}\int_{{\mathbb{R}}^{3}}d%
\mathbf{v}v_{x}f(t,\mathbf{x},\mathbf{v}) \\
&=&-\int_{\partial \Omega (t)}dS_{\mathbf{x}}\int_{{\mathbb{R}}^{3}}d\mathbf{%
v}(\mathbf{V}(t)\cdot \mathbf{n})v_{x}f(t,\mathbf{x},\mathbf{v}%
)+\int_{\Omega ^{c}(t)}d\mathbf{x}\int_{{\mathbb{R}}^{3}}d\mathbf{v}%
v_{x}\partial _{t}f(t,\mathbf{x},\mathbf{v}) \\
&=&-\int_{\partial \Omega (t)}dS_{\mathbf{x}}\int_{{\mathbb{R}}^{3}}d\mathbf{%
v}(\mathbf{V}(t)\cdot \mathbf{n})v_{x}f(t,\mathbf{x},\mathbf{v}%
)-\int_{\Omega ^{c}(t)}d\mathbf{x}\int_{{\mathbb{R}}^{3}}d\mathbf{v}%
v_{x}\left( \mathbf{v}\cdot \nabla _{\mathbf{x}}f\right) \\
&=&-\int_{\partial \Omega (t)}dS_{\mathbf{x}}\int_{{\mathbb{R}}^{3}}d\mathbf{%
v}(\mathbf{V}(t)\cdot \mathbf{n})v_{x}f(t,\mathbf{x},\mathbf{v}%
)+\int_{\partial \Omega (t)}dS_{\mathbf{x}}\int_{{\mathbb{R}}^{3}}\left( 
\mathbf{v}\cdot \mathbf{n}\right) v_{x}f(t,\mathbf{x},\mathbf{v})d\mathbf{v}
\\
&=&\int_{\partial \Omega (t)}dS_{\mathbf{x}}\int_{{\mathbb{R}}^{3}}d\mathbf{v%
}\left[ \left( \mathbf{v}-\mathbf{V}(t)\right) \cdot \mathbf{n}\right]
v_{x}f(t,\mathbf{x},\mathbf{v}).
\end{eqnarray*}%
Via conservation of mass (\ref{relation:conservation of particles}), we can
write 
\begin{eqnarray*}
F(t) &=&\frac{d}{dt}\int_{\Omega ^{c}(t)}d\mathbf{x}\int_{{\mathbb{R}}^{3}}d%
\mathbf{v}v_{x}f(t,\mathbf{x},\mathbf{v})-V_{x}\frac{d}{dt}\int_{\Omega
^{c}(t)}d\mathbf{x}\int_{{\mathbb{R}}^{3}}d\mathbf{v}f(t,\mathbf{x},\mathbf{v%
}) \\
&=&\int_{\partial \Omega (t)}dS_{\mathbf{x}}\int_{{\mathbb{R}}^{3}}d\mathbf{v%
}\left[ \left( \mathbf{v}-\mathbf{V}(t)\right) \cdot \mathbf{n}\right]
\left( v_{x}-V_{x}\right) f(t,\mathbf{x},\mathbf{v}).
\end{eqnarray*}
\end{proof}

\begin{lemma}
If we specialize $\Omega (t)$ to a cylinder centered at $(X(t),0,0)$ with
its circular base perpendicular to the x-axis and moving only horizontally,
i.e. $\mathbf{V}=(V,0,0)$, then the horizontal force $F$ is given by%
\begin{equation}
F(t)=F_{L,R}(t)+F_{S}(t),  \label{force:used formula}
\end{equation}%
where $F_{L,R}(t)$ is the contribution from both $\partial \Omega _{L}(t)$
and $\partial \Omega _{R}(t)$, the left and right ends of the cylinder, and $%
F_{S}(t)$ is the contribution from $\partial \Omega _{S}(t)$, the lateral
side of the cylinder. Written explicitly in terms of the incident and
reflected particles, they are%
\begin{eqnarray*}
F_{L,R}(t) &=&\int_{\partial \Omega _{R}(t)}dS_{\mathbf{x}%
}\int_{v_{x}\leqslant V(t)}d\mathbf{v}[v_{x}-V(t)]^{2}f_{-}\left( t,\mathbf{x%
},\mathbf{v}\right) \\
&&+\int_{\partial \Omega _{R}(t)}dS_{\mathbf{x}}\int_{v_{x}\geqslant V(t)}d%
\mathbf{v}[v_{x}-V(t)]^{2}f_{+}\left( t,\mathbf{x},\mathbf{v}\right) \\
&&-\int_{\partial \Omega _{L}(t)}dS_{\mathbf{x}}\int_{v_{x}\geqslant V(t)}d%
\mathbf{v}[v_{x}-V(t)]^{2}f_{-}\left( t,\mathbf{x},\mathbf{v}\right) \\
&&-\int_{\partial \Omega _{L}(t)}dS_{\mathbf{x}}\int_{v_{x}\leqslant V(t)}d%
\mathbf{v}[v_{x}-V(t)]^{2}f_{+}\left( t,\mathbf{x},\mathbf{v}\right)
\end{eqnarray*}%
and%
\begin{eqnarray}
F_{S}(t) &=&\int_{\partial \Omega _{S}(t)}dS_{\mathbf{x}}\int_{\mathbf{v}%
\cdot \mathbf{n\leqslant }0}d\mathbf{v}\left( \mathbf{v}\cdot \mathbf{n}%
\right) \left( v_{x}-V\right) f_{-}(t,\mathbf{x},\mathbf{v})
\label{force:lateral} \\
&&+\int_{\partial \Omega _{S}(t)}dS_{\mathbf{x}}\int_{\mathbf{v}\cdot 
\mathbf{n\geqslant }0}d\mathbf{v}\left( \mathbf{v}\cdot \mathbf{n}\right)
\left( v_{x}-V\right) f_{+}(t,\mathbf{x},\mathbf{v}).  \notag
\end{eqnarray}
\end{lemma}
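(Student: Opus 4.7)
The plan is to specialize the general force formula \eqref{equation:derivation of F(t)} to the cylindrical geometry by a piece-by-piece analysis of $\partial\Omega(t)=\partial\Omega_L(t)\cup\partial\Omega_R(t)\cup\partial\Omega_S(t)$. Writing $F(t)$ as the sum of the three corresponding surface integrals and grouping the two end contributions immediately gives the decomposition $F(t)=F_{L,R}(t)+F_S(t)$; what remains is to rewrite the integrand on each piece.

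On the right end the outward normal is $\mathbf{n}=\mathbf{i}$ and $\mathbf{V}=(V,0,0)$, so $(\mathbf{v}-\mathbf{V}(t))\cdot\mathbf{n}=v_x-V$ and the integrand in \eqref{equation:derivation of F(t)} reduces to $(v_x-V)^2 f$. On the left end $\mathbf{n}=-\mathbf{i}$, producing $-(v_x-V)^2 f$; this accounts for the overall minus signs in the last two lines of the asserted formula for $F_{L,R}$. On the lateral surface, $\mathbf{n}\perp\mathbf{i}$ forces $\mathbf{V}\cdot\mathbf{n}=0$, so $(\mathbf{v}-\mathbf{V}(t))\cdot\mathbf{n}=\mathbf{v}\cdot\mathbf{n}$, yielding exactly the integrand appearing in \eqref{force:lateral}.

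Next I would invoke the definition $f_\pm(t,\mathbf{x},\mathbf{v})=\lim_{\epsilon\to 0^+} f(t\pm\epsilon,\mathbf{x}\pm\epsilon\mathbf{v},\mathbf{v})$ to identify the appropriate trace of $f$: on the incoming set $(\mathbf{v}-\mathbf{V}(t))\cdot\mathbf{n}<0$ one uses $f_-$, and on the outgoing set $(\mathbf{v}-\mathbf{V}(t))\cdot\mathbf{n}>0$ one uses $f_+$. On the right face this pairs $v_x\leq V$ with $f_-$ and $v_x\geq V$ with $f_+$; the left face, where $\mathbf{n}=-\mathbf{i}$ flips the sign, pairs $v_x\geq V$ with $f_-$ and $v_x\leq V$ with $f_+$; the lateral face splits directly along the sign of $\mathbf{v}\cdot\mathbf{n}$. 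Substituting line by line then reproduces the displayed formulas for $F_{L,R}(t)$ and $F_S(t)$.

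The argument is essentially geometric bookkeeping, and the only delicate point, which I would treat as the main obstacle, is the legitimacy of replacing $f$ by its one-sided traces $f_\pm$. On the grazing set $(\mathbf{v}-\mathbf{V}(t))\cdot\mathbf{n}=0$ the explicit factor $(\mathbf{v}-\mathbf{V}(t))\cdot\mathbf{n}$ already kills the integrand, so the choice of limit is immaterial there; off the grazing set, free transport $\partial_t f+\mathbf{v}\cdot\nabla_\mathbf{x} f=0$ in $\Omega^c(t)$ guarantees that the appropriate trace exists and coincides with $f$ from the relevant side of the collision. No serious analytic estimate is required beyond this.
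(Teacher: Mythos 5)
Your proposal is correct and follows essentially the same route as the paper: specialize \eqref{equation:derivation of F(t)} to the cylinder piece by piece, note $\mathbf{n}=\pm\mathbf{i}$ on the ends and $\mathbf{V}\cdot\mathbf{n}=0$ on the lateral side, then split $f$ into its $f_-$/$f_+$ traces according to the sign of $(\mathbf{v}-\mathbf{V})\cdot\mathbf{n}$. Your additional remark that the grazing set $(\mathbf{v}-\mathbf{V})\cdot\mathbf{n}=0$ contributes nothing because of the explicit vanishing factor is a harmless elaboration the paper leaves implicit.
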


\begin{proof}
At the right side of the cylinder, $\mathbf{n}=(1,0,0)$ and $\mathbf{V}%
=(V,0,0)$. So for the right end, we get from 
\eqref{equation:derivation
of F(t)} the term%
\begin{eqnarray*}
&&\int_{\partial \Omega _{R}(t)}dS_{\mathbf{x}}\int_{{\mathbb{R}}^{3}}d%
\mathbf{v}\left[ \left( \mathbf{v}-\mathbf{V}(t)\right) \cdot \mathbf{n}%
\right] \left( v_{x}-V\right) f(t,\mathbf{x},\mathbf{v}) \\
&=&\int_{\partial \Omega _{R}(t)}dS\int d\mathbf{v}[v_{x}-V(t)]^{2}f(t,%
\mathbf{x},\mathbf{v}).
\end{eqnarray*}%
Splitting the above integral into its absorbed (incident) and emitted
(scattered) parts as%
\begin{equation*}
f\left( t,\mathbf{x},\mathbf{v}\right) =f_{-}\left( t,\mathbf{x},\mathbf{v}%
\right) \chi \left( \left\{ v_{x}\leqslant V(t)\right\} \right) +f_{+}\left(
t,\mathbf{x},\mathbf{v}\right) \chi \left( \left\{ v_{x}>V(t)\right\}
\right) ,
\end{equation*}%
we have%
\begin{eqnarray}
&&\int_{\partial \Omega _{R}(t)}dS_{\mathbf{x}}\int d\mathbf{v}%
[v_{x}-V(t)]^{2}f(t,\mathbf{x},\mathbf{v})  \label{force:right} \\
&=&\int_{\partial \Omega _{R}(t)}dS_{\mathbf{x}}\int_{v_{x}\leqslant V(t)}d%
\mathbf{v}[v_{x}-V(t)]^{2}f_{-}\left( t,\mathbf{x},\mathbf{v}\right)  \notag
\\
&&+\int_{\partial \Omega _{R}(t)}dS_{\mathbf{x}}\int_{v_{x}\geqslant V(t)}d%
\mathbf{v}[v_{x}-V(t)]^{2}f_{+}\left( t,\mathbf{x},\mathbf{v}\right) . 
\notag
\end{eqnarray}%
Similarly, at the left end of the cylinder we have 
\begin{eqnarray}
&&\int_{\partial \Omega _{L}(t)}dS_{\mathbf{x}}\int_{{\mathbb{R}}^{3}}d%
\mathbf{v}\left[ \left( \mathbf{v}-\mathbf{V}(t)\right) \cdot \mathbf{n}%
\right] \left( v_{x}-V\right) f(t,\mathbf{x},\mathbf{v})  \label{force:left}
\\
&=&-\int_{\partial \Omega _{L}(t)}dS\int d\mathbf{v}[v_{x}-V(t)]^{2}f(t,%
\mathbf{x},\mathbf{v})  \notag \\
&=&-\int_{\partial \Omega _{L}(t)}dS_{\mathbf{x}}\int_{v_{x}\geqslant V(t)}d%
\mathbf{v}[v_{x}-V(t)]^{2}f_{-}\left( t,\mathbf{x},\mathbf{v}\right)  \notag
\\
&&-\int_{\partial \Omega _{L}(t)}dS_{\mathbf{x}}\int_{v_{x}\leqslant V(t)}d%
\mathbf{v}[v_{x}-V(t)]^{2}f_{+}\left( t,\mathbf{x},\mathbf{v}\right) . 
\notag
\end{eqnarray}%
Adding (\ref{force:right}) and (\ref{force:left}) gives $F_{L,R}(t).$

At the lateral boundary we have%
\begin{eqnarray*}
F_{S}(t) &=&\int_{\partial \Omega _{S}(t)}dS_{\mathbf{x}}\int_{{\mathbb{R}}%
^{3}}d\mathbf{v}\left[ \left( \mathbf{v}-\mathbf{V}(t)\right) \cdot \mathbf{n%
}\right] \left( v_{x}-V\right) f(t,\mathbf{x},\mathbf{v}) \\
&=&\int_{\partial \Omega _{S}(t)}dS_{\mathbf{x}}\int_{{\mathbb{R}}^{3}}d%
\mathbf{v}\left( \mathbf{v}\cdot \mathbf{n}\right) \left( v_{x}-V\right) f(t,%
\mathbf{x},\mathbf{v}).
\end{eqnarray*}%
Here the sign of $\mathbf{v}\cdot \mathbf{n}$ indicates the incident and
scattered particles, namely, 
\begin{equation*}
f\left( t,\mathbf{x},\mathbf{v}\right) =f_{-}\left( t,\mathbf{x},\mathbf{v}%
\right) \chi \left( \left\{ \mathbf{v}\cdot \mathbf{n}\leqslant 0\right\}
\right) +f_{+}\left( t,\mathbf{x},\mathbf{v}\right) \chi \left( \left\{ 
\mathbf{v}\cdot \mathbf{n}>0\right\} \right) ,
\end{equation*}%
so that 
\begin{eqnarray*}
F_{S}(t) &=&\int_{\partial \Omega _{S}(t)}dS_{\mathbf{x}}\int_{\mathbf{v}%
\cdot \mathbf{n\leqslant }0}d\mathbf{v}\left( \mathbf{v}\cdot \mathbf{n}%
\right) \left( v_{x}-V\right) f_{-}(t,\mathbf{x},\mathbf{v}) \\
&&+\int_{\partial \Omega _{S}(t)}dS_{\mathbf{x}}\int_{\mathbf{v}\cdot 
\mathbf{n\geqslant }0}d\mathbf{v}\left( \mathbf{v}\cdot \mathbf{n}\right)
\left( v_{x}-V\right) f_{+}(t,\mathbf{x},\mathbf{v}),
\end{eqnarray*}%
which is exactly (\ref{force:lateral}).
\end{proof}

\subsection{Boundary Conditions\label{Sec:MassConservation}}

In this section, we introduce boundary conditions for the scattering of the
particles which satisfy conservation of mass (\ref{relation:conservation of
particles}). There are three boundaries we are considering, namely, $%
\partial \Omega _{R}(t),$ $\partial \Omega _{L}(t),\partial \Omega _{S}(t).$
We will consider first $\partial \Omega _{R}(t)$ and $\partial \Omega
_{L}(t) $, then $\partial \Omega _{S}(t).$

\subsubsection{Boundary Conditions at the Two Ends}

On $\partial \Omega _{R}(t)$, the right circular base of the cylinder, for $%
v_{x}\geqslant V(t)$, we assume the boundary condition 
\begin{equation}
f_{+}(t,\mathbf{x};\mathbf{v})=\alpha f_{-}(t,\mathbf{x},2V(t)-v_{x},v_{\bot
})+(1-\alpha )\int_{u_{x}\leqslant V\left( t\right) }K\left( \mathbf{v}-%
\mathbf{i}V\left( t\right) ;\mathbf{u}-\mathbf{i}V\left( t\right) \right)
f_{-}(t,\mathbf{x};\mathbf{u})d\mathbf{u},  \label{boundary condition: Right}
\end{equation}%
Similarly, on $\partial \Omega _{L}(t)$, the left circular base of the
cylinder, for $v_{x}\leqslant V(t),$ we assume the boundary condition 
\begin{equation}
f_{+}(t,\mathbf{x};\mathbf{v})=\alpha f_{-}(t,\mathbf{x};2V(t)-v_{x},v_{\bot
})+(1-\alpha )\int_{u_{x}\geqslant V\left( t\right) }K\left( \mathbf{v}-%
\mathbf{i}V\left( t\right) ;\mathbf{u}-\mathbf{i}V\left( t\right) \right)
f_{-}(t,\mathbf{x};\mathbf{u})d\mathbf{u}.  \label{boundary condition: Left}
\end{equation}%
We assume $\alpha \in \left[ 0,1\right) $ since we are interested in the
mixed boundary condition, part specular and part diffusing. We would like to
have the same law of reflection on both circular ends of the cylinder so we
assume that the kernel $K\left( \mathbf{v},\mathbf{u}\right)$ is nonnegative
and is even in both $u_x$ and $v_x$ separately, namely, 
\begin{equation*}
K\left( v_{x},v_{\bot };u_{x},u_{\bot }\right) = K(-v_x,v_\perp;
u_x,u_\perp) = K(v_x,v_\perp; -u_x,u_\perp), \quad \forall u=(u_x,u_\perp),
v=(v_x,v_\perp) \in\mathbb{R}^3.
\end{equation*}

\begin{lemma}
\label{mass} If the collision kernel $K(\mathbf{v,u})$ satisfies 
\begin{equation}
\int_{v_{x}\geqslant 0}v_{x}K(\mathbf{v,u})d\mathbf{v}=\left\vert
u_{x}\right\vert ,  \label{kernelmass}
\end{equation}%
then across both ends of the cylinder the mass is conserved. This means that 
\begin{equation}  \label{flux}
\int_{\partial \Omega _{R}(t)}dS_{\mathbf{x}}\int_{{\mathbb{R}}^{3}}d\mathbf{%
v}\left[ \left( \mathbf{v}-\mathbf{V}(t)\right) \cdot \mathbf{n}\right] f(t,%
\mathbf{x},\mathbf{v}) = \int_{\partial \Omega _{L}(t)}dS_{\mathbf{x}}\int_{{%
\mathbb{R}}^{3}}d\mathbf{v}\left[ \left( \mathbf{v}-\mathbf{V}(t)\right)
\cdot \mathbf{n}\right] f(t,\mathbf{x},\mathbf{v})=0.
\end{equation}
\end{lemma}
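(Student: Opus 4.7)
The plan is to compute the flux across $\partial\Omega_R(t)$ directly from the definition and show it vanishes term by term, then note that the left end is completely symmetric. Since $\mathbf{n}=(1,0,0)$ and $\mathbf{V}=(V,0,0)$ on the right base, $(\mathbf{v}-\mathbf{V})\cdot\mathbf{n} = v_x - V(t)$, and the flux on the right end naturally splits into an incoming part (integrated against $f_-$ over $v_x\leqslant V(t)$) and an outgoing part (integrated against $f_+$ over $v_x\geqslant V(t)$). The idea is to use the boundary condition \eqref{boundary condition: Right} to rewrite the outgoing $f_+$ integral in terms of $f_-$ and then see that the result is exactly the negative of the incoming integral.

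First I would handle the specular contribution $\alpha f_-(t,\mathbf{x},2V(t)-v_x,v_\perp)$. In the integral
$$\alpha\int_{v_x\geqslant V(t)} (v_x-V(t))\, f_-(t,\mathbf{x},2V(t)-v_x,v_\perp)\, d\mathbf{v},$$
I substitute $w_x = 2V(t)-v_x$ (so $dw_x=-dv_x$ and $v_x\geqslant V(t)$ becomes $w_x\leqslant V(t)$), which converts this exactly into
$$-\alpha\int_{w_x\leqslant V(t)} (w_x-V(t))\, f_-(t,\mathbf{x},w_x,v_\perp)\, dw_x\, dv_\perp.$$
Thus the specular term cancels the fraction $\alpha$ of the incoming $f_-$ contribution.

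Next I would handle the diffusive contribution. Shifting to body-frame variables $\tilde{\mathbf{v}}=\mathbf{v}-\mathbf{i}V(t)$ and $\tilde{\mathbf{u}}=\mathbf{u}-\mathbf{i}V(t)$ transforms the region $v_x\geqslant V(t)$ to $\tilde v_x\geqslant 0$ and $u_x\leqslant V(t)$ to $\tilde u_x\leqslant 0$, and turns $v_x-V(t)$ into $\tilde v_x$. An application of Fubini then isolates the inner integral
$$\int_{\tilde v_x\geqslant 0} \tilde v_x\, K(\tilde{\mathbf{v}};\tilde{\mathbf{u}})\, d\tilde{\mathbf{v}} \;=\; |\tilde u_x|$$
by the hypothesis \eqref{kernelmass}. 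Undoing the shift produces the term
$$(1-\alpha)\int_{u_x\leqslant V(t)} (V(t)-u_x)\, f_-(t,\mathbf{x},\mathbf{u})\, d\mathbf{u},$$
which cancels the remaining $(1-\alpha)$ fraction of the incoming $f_-$ contribution. Adding the three pieces gives zero flux across $\partial\Omega_R(t)$.

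Finally, on $\partial\Omega_L(t)$ the outward normal is $\mathbf{n}=(-1,0,0)$ and the incident/outgoing roles swap sides, but the boundary condition \eqref{boundary condition: Left} has the same structure with the velocity domains reversed; the same change of variables $w_x = 2V(t)-v_x$ and body-frame shift apply verbatim, with all inequalities reversed, so the left-end flux also vanishes. No real obstacle arises; the only thing requiring care is bookkeeping of signs and integration domains under the reflection substitution and the kernel shift, so I would lay out those two changes of variable once at the start and reuse them for both ends.
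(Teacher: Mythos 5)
Your proof is correct and follows essentially the same route as the paper: split the right-end flux into incoming ($f_-$) and outgoing ($f_+$) parts, insert the boundary condition, and use Fubini together with \eqref{kernelmass} (after the body-frame shift) to show the outgoing flux exactly cancels the incoming one. The only difference is cosmetic: you carry out the specular cancellation explicitly via the substitution $w_x = 2V(t)-v_x$, whereas the paper simply invokes it as well known and displays only the diffusive computation.
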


\begin{proof}
Because it is very well known that specular reflection preserves mass, we
only consider the diffusing term in (\ref{boundary condition: Right}) and (%
\ref{boundary condition: Left}). The proof is the same on the left and the
right. Consider the right end. Splitting the integral \eqref{flux} at the
right end into its absorbed and emitted parts, we have 
\begin{equation*}
f\left( t,\mathbf{x},\mathbf{v}\right) =f_{-}\left( t,\mathbf{x},\mathbf{v}%
\right) \chi \left( \left\{ v_{x}\leqslant V_{x}(t)\right\} \right)
+f_{+}\left( t,\mathbf{x},\mathbf{v}\right) \chi \left( \left\{
v_{x}>V_{x}(t)\right\} \right) ,
\end{equation*}%
and 
\begin{eqnarray*}
&&\int_{\partial \Omega _{R}(t)}dS_{\mathbf{x}}\int_{{\mathbb{R}}^{3}}d%
\mathbf{v}\left[ \left( \mathbf{v}-\mathbf{V}(t)\right) \cdot \mathbf{n}%
\right] f(t,\mathbf{x},\mathbf{v}) \\
&=&\int_{\partial \Omega _{R}(t)}dS_{\mathbf{x}}\int_{v_{x}\leqslant
V_{x}(t)}d\mathbf{v}\left[ \left( v_{x}-V_{x}(t)\right) \right] f_{-}(t,%
\mathbf{x},\mathbf{v}) \\
&&+\int_{\partial \Omega _{R}(t)}dS_{\mathbf{x}}\int_{v_{x}>V_{x}(t)}d%
\mathbf{v}\left[ \left( v_{x}-V_{x}(t)\right) \right] f_{+}(t,\mathbf{x},%
\mathbf{v}) \\
&=&I+II.
\end{eqnarray*}%
Plugging the boundary condition into $II$, we have%
\begin{eqnarray*}
II &=&\int_{\partial \Omega _{R}(t)}dS_{\mathbf{x}}\int_{v_{x}\geqslant
V_{x}(t)}d\mathbf{v}\left[ \left( v_{x}-V_{x}(t)\right) \right]
\int_{u_{x}\leqslant V_{x}(t)}K(\mathbf{v-i}V_{x}(t),\mathbf{u}-\mathbf{i}%
V_{x}(t))f_{-}(t,\mathbf{x},\mathbf{u}) \\
&=&\int_{\partial \Omega _{R}(t)}dS_{\mathbf{x}}\int_{u_{x}\leqslant
V_{x}(t)}d\mathbf{u}f_{-}(t,\mathbf{x},\mathbf{u})\int_{v_{x}\geqslant
V_{x}(t)}d\mathbf{v}\left[ \left( v_{x}-V_{x}(t)\right) \right] K(\mathbf{v-i%
}V_{x}(t),\mathbf{u}-\mathbf{i}V_{x}(t)).
\end{eqnarray*}%
By \eqref{kernelmass} we have 
\begin{equation*}
\int_{v_{x}\geqslant V_{x}(t)}d\mathbf{v}\left[ \left( v_{x}-V_{x}(t)\right) %
\right] K(\mathbf{v-i}V_{x}(t),\mathbf{u}-\mathbf{i}V_{x}(t))=-\left(
u_{x}-V_{x}(t)\right)
\end{equation*}%
for $u_{x}\leqslant V_{x}.$ Hence 
\begin{equation*}
II=-\int_{\partial \Omega _{R}(t)}dS_{\mathbf{x}}\int_{v_{x}\leqslant
V_{x}(t)}d\mathbf{v}\left[ \left( v_{x}-V_{x}(t)\right) \right] f_{-}(t,%
\mathbf{x},\mathbf{v})=-I.
\end{equation*}%
Thus we conclude that 
\begin{equation*}
\int_{\partial \Omega _{R}(t)}dS_{\mathbf{x}}\int_{{\mathbb{R}}^{3}}d\mathbf{%
v}\left[ \left( \mathbf{v}-\mathbf{V}(t)\right) \cdot \mathbf{n}\right] f(t,%
\mathbf{x},\mathbf{v})=0.
\end{equation*}
\end{proof}


\subsubsection{Boundary Conditions on the Lateral Boundary}

As with the ends, we impose a linear combination of specular and diffusing
boundary conditions on $\partial \Omega _{S}$. Let $\mathbf{n}_{\mathbf{x}}$
be the outward normal and let and $\mathbf{T}_{\mathbf{x}}$ be the circular
tangential direction at $\mathbf{x}\in \partial \Omega _{S}$. We assume on $%
\partial \Omega _{S}$ the boundary condition 
\begin{equation}
f_{+}(t,\mathbf{x};\mathbf{v})=\alpha _{S}f_{-}(t,\mathbf{x};v_{x},\mathbf{%
v\cdot T}_{\mathbf{x}},-\mathbf{v\cdot n}_{\mathbf{x}})+(1-\alpha _{S})\int_{%
\mathbf{u}\cdot \mathbf{n}_{\mathbf{x}}\leqslant 0}K_{S}\left( \mathbf{v};%
\mathbf{u}\right) f_{-}(t,\mathbf{x};\mathbf{u})d\mathbf{u}
\label{boundary condition: Lateral}
\end{equation}%
for $\mathbf{v}\cdot \mathbf{n}_{\mathbf{x}}\leq 0$, where $K_{S}\geq 0$ is
the lateral collision kernel and $\alpha _{L}\in \lbrack 0,1].$ We assume 
\begin{equation*}
K_{S}\left( v_{x},v_{\bot };u_{x},u_{\bot }\right) =K_{S}\left( \pm
v_{x},v_{\bot };\pm u_{x},u_{\bot }\right)
\end{equation*}%
since we want the same reflection law for the particles coming from the left
and the right. 
(We require neither the same kernel nor the same $\alpha $ as at the ends.) 
Notice that the horizontal speed $V(t)$ of the body does not enter the
lateral boundary condition. See Subsection \ref{Sec:LateralPartIncludingAoki}
for a different condition.

Because the body moves only horizontally, no particle can collide on the
lateral side more than once. Thus the particles that collide with the
lateral side must have moved in a straight line from $t=0$. So we can put $%
f_{0}$ in place of $f_{-}$ in (\ref{boundary condition: Lateral}), that is, 
\begin{equation}
f_{+}(t,\mathbf{x};\mathbf{v})=\alpha _{S}f_{0}(v_{x},\mathbf{v\cdot T}_{%
\mathbf{x}},-\mathbf{v\cdot n}_{\mathbf{x}})+(1-\alpha _{S})\int_{\mathbf{u}%
\cdot \mathbf{n}_{\mathbf{x}}\leqslant 0}K_{S}\left( \mathbf{v};\mathbf{u}%
\right) f_{0}(\mathbf{u})d\mathbf{u.}  \label{lateral f_0}
\end{equation}

\begin{lemma}
\label{lateral mass} If the lateral collision kernel $K_{S}$ satisfies 
\begin{equation}
\int_{\mathbf{v\cdot n}_{\mathbf{x}}\geqslant 0}\left( \mathbf{v\cdot n}_{%
\mathbf{x}}\right) K_{S}\left( \mathbf{v};\mathbf{u}\right) d\mathbf{v}%
=\left\vert \mathbf{u\cdot n}_{\mathbf{x}}\right\vert \chi _{\mathbf{u\cdot n%
}_{\mathbf{x}}\leqslant 0},  \label{kernelmass:lateral}
\end{equation}%
then the mass is conserved across the lateral boundary. That is, 
\begin{equation*}
\int_{\partial \Omega _{S}(t)}dS_{\mathbf{x}}\int_{{\mathbb{R}}^{3}}d\mathbf{%
v}\left( \mathbf{v}\cdot \mathbf{n}_{x}\right) f(t,\mathbf{x},\mathbf{v})=0.
\end{equation*}
\end{lemma}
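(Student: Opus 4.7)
The plan is to mimic the proof of Lemma \ref{mass} closely, now with the normal component $\mathbf{v}\cdot\mathbf{n}_{\mathbf{x}}$ playing the role of $v_{x}-V(t)$. The key geometric observation is that on the lateral boundary the outward normal $\mathbf{n}_{\mathbf{x}}$ is perpendicular to $\mathbf{i}$, hence $\mathbf{V}(t)\cdot\mathbf{n}_{\mathbf{x}} = V(t)\,\mathbf{i}\cdot\mathbf{n}_{\mathbf{x}} = 0$; consequently $(\mathbf{v}-\mathbf{V}(t))\cdot\mathbf{n}_{\mathbf{x}} = \mathbf{v}\cdot\mathbf{n}_{\mathbf{x}}$, which explains why the stated flux involves $\mathbf{v}\cdot\mathbf{n}_{\mathbf{x}}$ rather than the relative normal velocity in \eqref{relation:conservation of particles}.

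Fixing $\mathbf{x}\in\partial\Omega_{S}(t)$, I would split the inner $\mathbf{v}$-integral according to the sign of $\mathbf{v}\cdot\mathbf{n}_{\mathbf{x}}$ into an incoming part $I := \int_{\mathbf{v}\cdot\mathbf{n}_{\mathbf{x}}\leq 0}(\mathbf{v}\cdot\mathbf{n}_{\mathbf{x}})f_{-}\,d\mathbf{v}\le 0$ and an outgoing part $II := \int_{\mathbf{v}\cdot\mathbf{n}_{\mathbf{x}}\geq 0}(\mathbf{v}\cdot\mathbf{n}_{\mathbf{x}})f_{+}\,d\mathbf{v}$, then substitute the boundary law \eqref{boundary condition: Lateral} into $II$, decomposing $II = II_{\mathrm{spec}} + II_{\mathrm{diff}}$. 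The target identity is $II = -I$. For $II_{\mathrm{spec}}$, I would work in the local orthonormal frame $(\mathbf{i},\mathbf{T}_{\mathbf{x}},\mathbf{n}_{\mathbf{x}})$ and change variables $\mathbf{v}\mapsto\mathbf{v}^{\ast}:=(v_{x},\mathbf{v}\cdot\mathbf{T}_{\mathbf{x}},-\mathbf{v}\cdot\mathbf{n}_{\mathbf{x}})$; this reflection is measure-preserving, bijects the half-space $\{\mathbf{v}\cdot\mathbf{n}_{\mathbf{x}}\geq 0\}$ with $\{\mathbf{v}^{\ast}\cdot\mathbf{n}_{\mathbf{x}}\leq 0\}$, and flips the sign of the normal component, yielding $II_{\mathrm{spec}} = -\alpha_{S}\,I$.

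For $II_{\mathrm{diff}}$, I would apply Fubini to swap the $\mathbf{v}$- and $\mathbf{u}$-integrals, and then use the hypothesis \eqref{kernelmass:lateral} to evaluate the inner $\mathbf{v}$-integral as $|\mathbf{u}\cdot\mathbf{n}_{\mathbf{x}}| = -\mathbf{u}\cdot\mathbf{n}_{\mathbf{x}}$ on the support $\mathbf{u}\cdot\mathbf{n}_{\mathbf{x}}\leq 0$, which yields $II_{\mathrm{diff}} = -(1-\alpha_{S})\,I$. Adding gives $I+II = I-\alpha_{S}I-(1-\alpha_{S})I = 0$ pointwise in $\mathbf{x}$, and integrating over $\partial\Omega_{S}(t)$ closes the proof.

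I do not anticipate a serious obstacle: the argument mirrors Lemma \ref{mass} line by line, and the only bookkeeping difference is that the specular change of variables now lives in the local tangent-normal frame $(\mathbf{i},\mathbf{T}_{\mathbf{x}},\mathbf{n}_{\mathbf{x}})$ rather than in the fixed axial frame. The normalization \eqref{kernelmass:lateral} is precisely the analog of \eqref{kernelmass}, tailored so that the Fubini step returns exactly $-I$; the kernel symmetry $K_{S}(\pm v_{x},v_{\perp};\pm u_{x},u_{\perp})=K_{S}(v_{x},v_{\perp};u_{x},u_{\perp})$ is not needed for mass conservation and will instead be used later to kill the net lateral force.
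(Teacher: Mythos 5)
Your proposal is correct and follows essentially the same route as the paper's proof: split the flux into incoming and outgoing halves, change variables in the specular term, and apply Fubini plus the normalization \eqref{kernelmass:lateral} in the diffuse term to obtain $II=-I$. The only cosmetic difference is that the paper substitutes \eqref{lateral f_0} (with $f_{0}$) rather than \eqref{boundary condition: Lateral} (with $f_{-}$), which is immaterial since $f_{-}=f_{0}$ on the lateral boundary; your added observation that $\mathbf{V}\cdot\mathbf{n}_{\mathbf{x}}=0$ there, explaining the form of the flux integrand, is a nice touch.
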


\begin{proof}
This is a direct computation. We write%
\begin{eqnarray*}
&&\int_{\partial \Omega _{S}(t)}dS_{\mathbf{x}}\int_{{\mathbb{R}}^{3}}d%
\mathbf{v}\left( \mathbf{v}\cdot \mathbf{n}_{x}\right) f(t,\mathbf{x},%
\mathbf{v}) \\
&=&\int_{\partial \Omega _{S}(t)}dS_{\mathbf{x}}\int_{\mathbf{v}\cdot 
\mathbf{n}_{x}\leqslant 0}d\mathbf{v}\left( \mathbf{v}\cdot \mathbf{n}%
_{x}\right) f_{-}(t,\mathbf{x},\mathbf{v})+\int_{\partial \Omega _{S}(t)}dS_{%
\mathbf{x}}\int_{\mathbf{v}\cdot \mathbf{n}_{x}\geqslant 0}d\mathbf{v}\left( 
\mathbf{v}\cdot \mathbf{n}_{x}\right) f_{+}(t,\mathbf{x},\mathbf{v}) \\
&=&I+II.
\end{eqnarray*}%
Then 
\begin{eqnarray*}
II &=&\alpha _{S}\int_{\partial \Omega _{S}(t)}dS_{\mathbf{x}}\int_{\mathbf{v%
}\cdot \mathbf{n}_{x}\geqslant 0}d\mathbf{v}\left( \mathbf{v}\cdot \mathbf{n}%
_{x}\right) f_{0}(v_{x},\mathbf{v\cdot T}_{\mathbf{x}},-\mathbf{v\cdot n}_{%
\mathbf{x}}) \\
&&+(1-\alpha _{S})\int_{\partial \Omega _{S}(t)}dS_{\mathbf{x}}\int_{\mathbf{%
v}\cdot \mathbf{n}_{x}\geqslant 0}d\mathbf{v}\left( \mathbf{v}\cdot \mathbf{n%
}_{x}\right) \int_{\mathbf{u}\cdot \mathbf{n}_{\mathbf{x}}\leqslant 0}d%
\mathbf{u}\ K_{S}\left( \mathbf{v};\mathbf{u}\right) f_{0}(\mathbf{u}) \\
&=&-\alpha _{S}\int_{\partial \Omega _{S}(t)}dS_{\mathbf{x}}\int_{\mathbf{v}%
\cdot \mathbf{n}_{x}\leqslant 0}d\mathbf{v}\left( \mathbf{v}\cdot \mathbf{n}%
_{x}\right) f_{0}(\mathbf{v}) \\
&&+(1-\alpha _{S})\int_{\partial \Omega _{S}(t)}dS_{\mathbf{x}}\int_{\mathbf{%
u}\cdot \mathbf{n}_{\mathbf{x}}\leqslant 0}d\mathbf{u}f_{0}(\mathbf{u}%
)\left( \int_{\mathbf{v}\cdot \mathbf{n}_{x}\geqslant 0}d\mathbf{v}\left( 
\mathbf{v}\cdot \mathbf{n}_{x}\right) K_{S}\left( \mathbf{v};\mathbf{u}%
\right) \right) \\
&=&-\alpha _{S}\int_{\partial \Omega _{S}(t)}dS_{\mathbf{x}}\int_{\mathbf{v}%
\cdot \mathbf{n}_{x}\leqslant 0}d\mathbf{v}\left( \mathbf{v}\cdot \mathbf{n}%
_{x}\right) f_{0}(\mathbf{v}) \\
&&-(1-\alpha _{S})\int_{\partial \Omega _{S}(t)}dS_{\mathbf{x}}\int_{\mathbf{%
u}\cdot \mathbf{n}_{\mathbf{x}}\leqslant 0}d\mathbf{u}\left( \mathbf{u\cdot n%
}_{\mathbf{x}}\right) f_{0}(\mathbf{u})=-I
\end{eqnarray*}%
by \eqref{lateral f_0}.
\end{proof}

\begin{lemma}
\label{lateral force} If the lateral collision kernel $K_{S}$ satisfies (\ref%
{kernelmass:lateral}), then the contribution $F_{S}(t)$ to the horizontal
force from the lateral boundary vanishes.
\end{lemma}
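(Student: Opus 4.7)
The plan is to reduce $F_S(t)$ to an integral whose integrand is manifestly odd in $v_x$ on each piece, and then conclude by parity. The key structural fact I want to exploit is that, because $\Omega(t)$ is a cylinder with axis parallel to $\mathbf{i}$, the outward normal $\mathbf{n}_\mathbf{x}$ on the lateral boundary $\partial\Omega_S(t)$ is perpendicular to $\mathbf{i}$. Hence $\mathbf{V}\cdot\mathbf{n} = 0$, and $\mathbf{v}\cdot\mathbf{n}_\mathbf{x}$ depends only on $v_{\perp}$, so each hemisphere $\{\mathbf{v}\cdot\mathbf{n}\lessgtr 0\}$ is invariant under the reflection $v_x\mapsto -v_x$.

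First I would apply Lemma \ref{lateral mass}: the vanishing of the lateral mass flux gives
\[
-V\int_{\partial\Omega_S(t)}dS_\mathbf{x}\int_{{\mathbb R}^3} d\mathbf{v}\,(\mathbf{v}\cdot\mathbf{n})\,f(t,\mathbf{x},\mathbf{v}) \;=\; 0,
\]
so the $V$-piece of the formula \eqref{force:lateral} drops out and $F_S(t)$ reduces to
\[
F_S(t) = \int_{\partial\Omega_S(t)}dS_\mathbf{x}\int_{\mathbf{v}\cdot\mathbf{n}\leq 0}\!\! d\mathbf{v}\,(\mathbf{v}\cdot\mathbf{n})\,v_x\,f_-(\mathbf{v}) + \int_{\partial\Omega_S(t)}dS_\mathbf{x}\int_{\mathbf{v}\cdot\mathbf{n}\geq 0}\!\! d\mathbf{v}\,(\mathbf{v}\cdot\mathbf{n})\,v_x\,f_+(\mathbf{v}).
\]

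Next I would run a pointwise parity argument in $v_x$ at each $\mathbf{x}\in\partial\Omega_S$. On the incoming side, because no particle collides with the lateral surface more than once (as observed just before \eqref{lateral f_0}), $f_-(\mathbf{v})=f_0(\mathbf{v})$, which is even in $v_x$ by the standing hypothesis on the initial data. On the outgoing side, the boundary condition \eqref{lateral f_0} writes $f_+$ as the sum of the specular piece $\alpha_S f_0(v_x,\mathbf{v}\cdot\mathbf{T}_\mathbf{x},-\mathbf{v}\cdot\mathbf{n}_\mathbf{x})$, which is even in $v_x$ because the lateral specular map leaves $v_x$ untouched and $f_0$ is even in $v_x$, and the diffuse piece $(1-\alpha_S)\int_{\mathbf{u}\cdot\mathbf{n}\leq 0} K_S(\mathbf{v};\mathbf{u})f_0(\mathbf{u})d\mathbf{u}$, which is even in $v_x$ by the assumed symmetry $K_S(v_x,v_\perp;u_x,u_\perp)=K_S(-v_x,v_\perp;u_x,u_\perp)$. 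In every case the integrand $(\mathbf{v}\cdot\mathbf{n})\,v_x\,f_\pm(\mathbf{v})$ is odd in $v_x$ over a $v_x$-symmetric domain, so each inner integral vanishes and $F_S(t)=0$.

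There is no serious obstacle; the proof is entirely a symmetry/parity bookkeeping. The only thing that must be checked carefully is that every object contributing to $f_\pm$ at the lateral boundary is even in $v_x$, and this is exactly what the evenness hypotheses on $f_0$ and on $K_S$ guarantee. The hypothesis \eqref{kernelmass:lateral} enters only indirectly, through Lemma \ref{lateral mass}, to knock out the $V$-term; the remaining momentum-flux term then dies by the parity argument alone.
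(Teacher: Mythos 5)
Your proof is correct and takes essentially the same route as the paper: eliminate the $V$-term via the vanishing lateral mass flux, then kill the remaining momentum-flux integral by parity in $v_x$, using the evenness of $f_0$ and $K_S$ and the fact that $\mathbf{v}\cdot\mathbf{n}_{\mathbf{x}}$ and the specular map depend only on $v_\perp$ since $\mathbf{n}_{\mathbf{x}}\perp\mathbf{i}$. The paper phrases the first step as $\partial F_S/\partial V=0$ (hence one may set $V=0$), which is the same observation as your direct subtraction of $V\cdot(\text{flux})$.
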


\begin{proof}
Recalling (\ref{force:lateral}), we have 
\begin{equation*}
F_{S}(t)=\int_{\partial \Omega _{S}(t)}dS_{\mathbf{x}}\int_{\mathbf{v}\cdot 
\mathbf{n\leqslant }0}d\mathbf{v}\left( \mathbf{v}\cdot \mathbf{n}\right)
\left( v_{x}-V\right) f(t,\mathbf{x},\mathbf{v}).
\end{equation*}%
Under \eqref{lateral f_0}, $f_{-}=f_{0}$ and $f_{+}$ are both independent of 
$V,$ so that 
\begin{equation*}
\frac{\partial F_{S}(t)}{\partial V}=-\int_{\partial \Omega _{S}(t)}dS_{%
\mathbf{x}}\int_{\mathbb{R}^{3}}d\mathbf{v}\left( \mathbf{v}\cdot \mathbf{n}%
\right) f(t,\mathbf{x},\mathbf{v})=0
\end{equation*}%
by mass conservation. That is, we can put $V=0$ when we compute $F_{S}(t).$
So 
\begin{eqnarray*}
F_{S}(t) &=&\int_{\partial \Omega _{S}(t)}dS_{\mathbf{x}}\int_{\mathbf{v}%
\cdot \mathbf{n\leqslant }0}d\mathbf{v}\left( \mathbf{v}\cdot \mathbf{n}%
\right) v_{x}f_{0}(\mathbf{v}) \\
&&+\alpha _{S}\int_{\partial \Omega _{S}(t)}dS_{\mathbf{x}}\int_{\mathbf{v}%
\cdot \mathbf{n\geqslant }0}d\mathbf{v}\left( \mathbf{v}\cdot \mathbf{n}%
\right) v_{x}f_{0}(v_{x},\mathbf{v\cdot T}_{\mathbf{x}},-\mathbf{v\cdot n}_{%
\mathbf{x}}) \\
&&+(1-\alpha _{S})\int_{\partial \Omega _{S}(t)}dS_{\mathbf{x}}\int_{\mathbf{%
v}\cdot \mathbf{n}_{x}\geqslant 0}d\mathbf{v}\left( \mathbf{v}\cdot \mathbf{n%
}_{x}\right) v_{x}\int_{\mathbf{u}\cdot \mathbf{n}_{\mathbf{x}}\leqslant 0}d%
\mathbf{u}K_{S}\left( \mathbf{v};\mathbf{u}\right) f_{0}(\mathbf{u}) \\
&=&I+II+III.
\end{eqnarray*}%
By the assumption that $f_{0}$ is even in $v_{x}$, we have 
\begin{equation*}
\int_{\mathbb{R}}v_{x}f_{0}(\mathbf{v})dv_{x}=0.
\end{equation*}%
Thus both $I$ and $II$ are $0$. Notice that $K_{S}\left( v_{x},v_{\bot
};u_{x},u_{\bot }\right) =K_{S}\left( -v_{x},v_{\bot };u_{x},u_{\bot
}\right) ,$ we have 
\begin{equation*}
\int_{\mathbb{R}}v_{x}K_{S}\left( \mathbf{v};\mathbf{u}\right) dv_{x}=0
\end{equation*}%
so that $III=0.$
\end{proof}


\subsubsection{Alternative Boundary Conditions on the Lateral Boundary \label%
{Sec:LateralPartIncludingAoki}}

Assume on $\partial \Omega _{S}$ that%
\begin{equation}
f_{+}(t,\mathbf{x};\mathbf{v})=\int_{\mathbf{u}\cdot \mathbf{n}_{\mathbf{x}%
}\leqslant 0}K_{S}\left( \mathbf{v-i}V(t);\mathbf{u-i}V(t)\right) f_{-}(t,%
\mathbf{x};\mathbf{u})d\mathbf{u},  \label{boundary condition: Lateral2}
\end{equation}%
where%
\begin{equation*}
K_{S}\left( v_{x},v_{\bot };u_{x},u_{\bot }\right) =K_{S}\left( \pm
v_{x},v_{\bot };\pm u_{x},u_{\bot }\right) \geq 0.
\end{equation*}%
For convenience, we have dropped the specular part since $V(t)$ is unrelated
to the specular reflections on $\partial \Omega _{S}.$ A special case of
boundary condition \eqref{boundary condition: Lateral2} was studied in \cite%
{Ital3}. We still have conservation of mass if we assume (\ref%
{kernelmass:lateral}) for $K_S$. However, for the alternative boundary
condition \eqref{boundary
condition: Lateral2} the force does not vanish, as we now show.

\begin{lemma}
\label{alt lateral force} Under boundary condition (\ref{boundary condition:
Lateral2}), the lateral force 
\begin{equation}
F_{S}(t)=\int_{\partial \Omega _{S}(t)}dS_{\mathbf{x}}\int_{\mathbf{v}\cdot 
\mathbf{n\leqslant }0}d\mathbf{v}\left( \mathbf{v}\cdot \mathbf{n}_{\mathbf{x%
}}\right) \left( v_{x}-V(t)\right) f_{0}(\mathbf{v})
\label{force:constant lateral force}
\end{equation}%
is a nonnegative function depending solely on $V(t)$. It satisfies $\frac{%
\partial F_{S}}{\partial V}\geqslant 0.$
\end{lemma}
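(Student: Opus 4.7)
The plan is to start from the general formula (\ref{force:lateral}) for $F_{S}(t)$ and eliminate the outgoing piece using the boundary condition (\ref{boundary condition: Lateral2}) together with two symmetries available here: $f_{0}$ is even in $v_{x}$, and $K_{S}$ is even in its first horizontal velocity argument. Because the cylinder moves only horizontally, particles striking $\partial\Omega_{S}(t)$ have followed straight-line trajectories from time $0$, so $f_{-}=f_{0}$ on the lateral surface, exactly as in the discussion leading to (\ref{lateral f_0}).

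First I would insert (\ref{boundary condition: Lateral2}) into the $f_{+}$ piece of (\ref{force:lateral}), switch the order of the $d\mathbf{v}$ and $d\mathbf{u}$ integrations, and perform the change of variables $\mathbf{w}=\mathbf{v}-\mathbf{i}V(t)$. Because the outward normal $\mathbf{n}$ on the lateral surface is perpendicular to $\mathbf{i}$, the condition $\mathbf{v}\cdot\mathbf{n}\geq 0$ is preserved, $\mathbf{v}\cdot\mathbf{n}=\mathbf{w}\cdot\mathbf{n}$, and $v_{x}-V=w_{x}$. The resulting inner integral has the form
\[
\int_{\mathbf{w}\cdot\mathbf{n}\geq 0} w_{x}\,(\mathbf{w}\cdot\mathbf{n})\, K_{S}(\mathbf{w},\mathbf{u}-\mathbf{i}V(t))\, d\mathbf{w},
\]
and it vanishes: the factor $\mathbf{w}\cdot\mathbf{n}$ and the half-space constraint depend only on $w_{\perp}$, so the $w_{x}$-integration reduces to $\int_{\mathbb{R}} w_{x}\, K_{S}(w_{x},\cdot)\,dw_{x}=0$ by the assumed evenness of $K_{S}$ in its first horizontal argument. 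This collapses $F_{S}(t)$ to the incoming-only expression (\ref{force:constant lateral force}).

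Next I would split (\ref{force:constant lateral force}) as
\[
F_{S}(t)=\int_{\partial\Omega_{S}(t)} dS_{\mathbf{x}}\int_{\mathbf{v}\cdot\mathbf{n}\leq 0}(\mathbf{v}\cdot\mathbf{n})\, v_{x}\, f_{0}(\mathbf{v})\,d\mathbf{v}\;-\; V(t)\int_{\partial\Omega_{S}(t)} dS_{\mathbf{x}}\int_{\mathbf{v}\cdot\mathbf{n}\leq 0}(\mathbf{v}\cdot\mathbf{n})\, f_{0}(\mathbf{v})\,d\mathbf{v}.
\]
The first term vanishes because $\mathbf{v}\cdot\mathbf{n}$ depends only on $v_{\perp}$ on the lateral boundary and $f_{0}$ is even in $v_{x}$, so $\int_{\mathbb{R}} v_{x} f_{0}\,dv_{x}=0$. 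The second integral is a nonpositive constant $-A$ with $A\geq 0$, and by translation invariance of the cylinder along $\mathbf{i}$ it is independent of $t$. Therefore $F_{S}(t)=V(t)\, A$, a function of $V(t)$ alone with $\partial F_{S}/\partial V=A\geq 0$, and $F_{S}\geq 0$ in the regime $V(t)\geq 0$ considered in this paper.

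The main obstacle, and the key to making the argument work, is the orthogonality $\mathbf{n}\cdot\mathbf{i}=0$ on the lateral surface, which simultaneously permits the clean change of variables by $\mathbf{i}V(t)$ (preserving the integration region and identifying $\mathbf{v}\cdot\mathbf{n}$ with $\mathbf{w}\cdot\mathbf{n}$) and the vanishing of the first surface integral by evenness of $f_{0}$ in $v_{x}$. Without this orthogonality, the translation would distort the integration domain and couple $V$ into $\mathbf{v}\cdot\mathbf{n}$ nontrivially, and the reduction to the simple linear formula $F_{S}=VA$ would fail.
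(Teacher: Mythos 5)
Your proposal is correct and follows essentially the same route as the paper: use the absence of recollisions to set $f_{-}=f_{0}$, kill the outgoing ($f_{+}$) contribution via the evenness of $K_{S}$ in $v_{x}$ after translating by $\mathbf{i}V(t)$, then use evenness of $f_{0}$ in $v_{x}$ so that only the $-V(t)\int(\mathbf{v}\cdot\mathbf{n})f_{0}$ piece survives, giving $F_{S}=V\cdot A$ with $A\geq 0$. The only stylistic difference is that you write $F_{S}=VA$ explicitly while the paper expresses it as $G(0)=0$, $G'(V)=A\geq 0$; and you are a bit more explicit than the paper in noting that nonnegativity relies on $V\geq 0$.
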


\begin{proof}
Again there is no recollision on $\partial \Omega _{S}$, so we have 
\begin{eqnarray*}
F_{S}(t) &=&\int_{\partial \Omega _{S}(t)}dS_{\mathbf{x}}\int_{\mathbf{v}{\ }%
\cdot \mathbf{n\leqslant }0}d\mathbf{v}\left( \mathbf{v}\cdot \mathbf{n}_{%
\mathbf{x}}\right) \left( v_{x}-V\right) f_{0}(\mathbf{v}) \\
&&+\int_{\partial \Omega _{S}(t)}dS_{\mathbf{x}}\int_{\mathbf{v}\cdot 
\mathbf{n\geqslant }0}d\mathbf{v}\left( \mathbf{v}\cdot \mathbf{n}_{\mathbf{x%
}}\right) \left( v_{x}-V\right) \int_{\mathbf{u}\cdot \mathbf{n}_{\mathbf{x}%
}\leqslant 0}K_{S}\left( \mathbf{v-i}V(t);\mathbf{u-i}V(t)\right) f_{0}(%
\mathbf{u})d\mathbf{u} \\
&=&I+II.
\end{eqnarray*}%
A change of variable of $v_{x}$ gives 
\begin{equation*}
II=\int_{\partial \Omega _{S}(t)}dS_{\mathbf{x}}\int_{\mathbf{v}\cdot 
\mathbf{n\geqslant }0}d\mathbf{v}\left( \mathbf{v}\cdot \mathbf{n}_{\mathbf{x%
}}\right) v_{x}\int_{\mathbf{u}\cdot \mathbf{n}_{\mathbf{x}}\leqslant
0}K_{S}\left( \mathbf{v};\mathbf{u-i}V(t)\right) f_{0}(\mathbf{u})d\mathbf{u}%
=0
\end{equation*}%
because $K_{S}(\mathbf{v},\mathbf{u})$ is even in $v_{x}$. Thus $%
F_{S}(t)=G(V(t))$ where 
\begin{equation*}
G(V)=\int_{\partial \Omega _{S}(t)}dS_{\mathbf{x}}\int_{\mathbf{v}\cdot 
\mathbf{n}_{\mathbf{x}}\mathbf{\leqslant }0}d\mathbf{v}\left( \mathbf{v}%
\cdot \mathbf{n}_{\mathbf{x}}\right) \left( v_{x}-V\right) f_{0}(\mathbf{v}).
\end{equation*}%
Now 
\begin{equation*}
G(0)=\int_{\partial \Omega _{S}(t)}dS_{\mathbf{x}}\int_{\mathbf{v}\cdot 
\mathbf{n}_{\mathbf{x}}\mathbf{\leqslant }0}d\mathbf{v}\left( \mathbf{v}%
\cdot \mathbf{n}_{\mathbf{x}}\right) v_{x}f_{0}(\mathbf{v})=0
\end{equation*}%
since $\int_{\mathbb{R}}v_{x}f_{0}(\mathbf{v})d\mathbf{v}=0$ and 
\begin{equation*}
G^{\prime }(V)=-\int_{\partial \Omega _{S}(t)}dS_{\mathbf{x}}\int_{\mathbf{v}%
{\ }\cdot \mathbf{n}_{\mathbf{x}}\mathbf{\leqslant }0}d\mathbf{v}\left( 
\mathbf{v}{\ }\cdot \mathbf{n}_{\mathbf{x}}\right) f_{0}(\mathbf{v}%
)\geqslant 0.
\end{equation*}%
So $G\geqslant 0$, which means $F_{S}(t)\geqslant 0.$
\end{proof}

No matter whether we take (\ref{boundary condition: Lateral}) or (\ref%
{boundary condition: Lateral2}) as the boundary condition on $\partial
\Omega _{S}$, $F_{S}(t)$ is a nonnegative function which depends solely on $%
V(t).$ So we write $F_{S}(t)$ as $F_{S}(V(t))$ from here on.


\subsection{Total Force on the Body}

We now use the boundary conditions to write the force explicitly and
succinctly in terms of $f_{-}(t,x,v)$.

\begin{lemma}
\label{total force} The force is given in terms of $f_{-}(t,\mathbf{x},%
\mathbf{v})$ as 
\begin{equation*}
F(t)=F_{S}(V(t))+\int_{\partial \Omega_R (t) \cup \partial \Omega_L (t)}
dS_{x}\int_{\mathbb{R}^{3}}d\mathbf{v}\ \text{sgn}(V(t)-v_{x})\ \ell (%
\mathbf{v}-iV(t))\ f_{-}(t,\mathbf{x},\mathbf{v}),
\end{equation*}%
where the nonnegative function $F_{S}(V(t))$ either vanishes or is given by (%
\ref{force:constant lateral force}) depending on the choice of boundary
condition on $\partial \Omega _{l}$, and 
\begin{equation}  \label{ell}
\ell (\mathbf{w})=(1+\alpha )w_{x}^{2}+(1-\alpha )\int_{v_{x}\geq 0}d\mathbf{%
v}\ v_{x}^{2}\ K(\mathbf{v},\mathbf{w}).
\end{equation}%
Of course, the last integral could also be taken over $\{v_{x}\leq 0\}$ due
to the evenness of the kernel.
\end{lemma}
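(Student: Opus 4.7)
The plan is to start from $F(t)=F_{L,R}(t)+F_S(t)$ (formula \eqref{force:used formula}) and use the boundary conditions at the two ends, \eqref{boundary condition: Right} and \eqref{boundary condition: Left}, to eliminate the unknowns $f_+$ from the expression for $F_{L,R}$, producing an integrand expressed purely in terms of $f_-$. The term $F_S$ has already been reduced to $F_S(V(t))$ in Lemmas \ref{lateral force} and \ref{alt lateral force}, so no further work is needed on the lateral side.

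First, consider the right end. The $f_-$ contribution in \eqref{force:right} is already of the desired form on the set $\{v_x\leq V(t)\}$. Into the $f_+$ integral over $\{v_x\geq V(t)\}$, I would substitute \eqref{boundary condition: Right}. For the specular piece, the change of variable $v_x\mapsto 2V-v_x$ is an involution on $\{v_x\geq V\}\leftrightarrow\{v_x\leq V\}$ under which $(v_x-V)^2$ is invariant, yielding $\alpha(v_x-V)^2 f_-(t,\mathbf{x},\mathbf{v})$ on $\{v_x\leq V\}$. For the diffusive piece, I swap the order of the $\mathbf{v}$ and $\mathbf{u}$ integrals and then translate $\mathbf{v}\mapsto\mathbf{v}-\mathbf{i}V$, so the inner integral becomes
\begin{equation*}
\int_{v_x\geq 0} v_x^{2}\, K(\mathbf{v};\mathbf{u}-\mathbf{i}V)\,d\mathbf{v},
\end{equation*}
which is exactly the kernel in the definition \eqref{ell} of $\ell$ applied at argument $\mathbf{u}-\mathbf{i}V$. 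Adding the three contributions collapses to $\int_{v_x\leq V}\ell(\mathbf{v}-\mathbf{i}V)\,f_-\,d\mathbf{v}$ against $(1+\alpha)(v_x-V)^2+(1-\alpha)\!\int v_x'^{\,2}K\,dv'$, which is precisely $\ell(\mathbf{v}-\mathbf{i}V)$. Since on $\partial\Omega_R$ the incoming set is $\{v_x\leq V\}$, there $\sgn(V-v_x)=+1$, so the right-end contribution equals $\int_{\partial\Omega_R}dS\int d\mathbf{v}\,\sgn(V-v_x)\,\ell(\mathbf{v}-\mathbf{i}V)\,f_-$ with the convention that $f_-$ is supported on the incident half-space.

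The left end is completely analogous. The roles of $\{v_x\leq V\}$ and $\{v_x\geq V\}$ are swapped, and an overall minus sign appears in \eqref{force:left} because $\mathbf{n}=(-1,0,0)$. The diffusive change of variable now produces $\int_{v_x\leq 0} v_x^{2}K\,d\mathbf{v}$, and this is where the evenness assumption $K(-v_x,v_\perp;u_x,u_\perp)=K(v_x,v_\perp;u_x,u_\perp)$ is used to identify this integral with the one in \eqref{ell}. The resulting left-end contribution is $-\int_{\partial\Omega_L}dS\int_{v_x\geq V}\ell(\mathbf{v}-\mathbf{i}V)f_-\,d\mathbf{v}$, and on $\partial\Omega_L$ the incoming set is $\{v_x\geq V\}$ on which $\sgn(V-v_x)=-1$, so this fits into the unified expression of the lemma with $\partial\Omega_R$ and $\partial\Omega_L$ combined under one integral.

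The proof is essentially bookkeeping; the only place where a non-routine ingredient is needed is invoking the evenness of $K$ in $v_x$ to match the half-space integral on the left end with the one defining $\ell$. The main obstacle is keeping the signs, the changes of variable, and the specular reflection $v_x\mapsto 2V-v_x$ aligned so that after collecting the four terms the coefficient $(1+\alpha)(v_x-V)^2$ emerges together with the $(1-\alpha)$-weighted diffusive integral, giving exactly $\ell(\mathbf{v}-\mathbf{i}V)$ on both ends with the correct overall sign $\sgn(V(t)-v_x)$.
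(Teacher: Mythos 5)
Your proposal is correct and follows the same route as the paper: substitute the boundary conditions at the two ends into $F_{L,R}$, use the involution $v_x\mapsto 2V-v_x$ for the specular term and Fubini plus the shift $\mathbf{v}\mapsto\mathbf{v}-\mathbf{i}V$ for the diffusive term, and collect everything into $\ell(\mathbf{v}-\mathbf{i}V)$ on each end with sign $\sgn(V-v_x)$. Your observation that the evenness of $K$ in $v_x$ is needed to match the left-end half-space integral with the one in the definition of $\ell$ is exactly the point the paper acknowledges in the final sentence of the lemma statement.
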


\begin{proof}
Recall that $F_{L,R}(t)$ is 
\begin{eqnarray*}
F_{L,R}(t) &=&\int_{\partial \Omega _{R}(t)}dS_{\mathbf{x}%
}\int_{v_{x}\leqslant V_{x}(t)}d\mathbf{v}[v_{x}-V_{x}(t)]^{2}f_{-}\left( t,%
\mathbf{x},\mathbf{v}\right) \\
&&+\int_{\partial \Omega _{R}(t)}dS_{\mathbf{x}}\int_{v_{x}\geqslant
V_{x}(t)}d\mathbf{v}[v_{x}-V_{x}(t)]^{2}f_{+}\left( t,\mathbf{x},\mathbf{v}%
\right) \\
&&-\int_{\partial \Omega _{L}(t)}dS_{\mathbf{x}}\int_{v_{x}\geqslant
V_{x}(t)}d\mathbf{v}[v_{x}-V_{x}(t)]^{2}f_{-}\left( t,\mathbf{x},\mathbf{v}%
\right) \\
&&-\int_{\partial \Omega _{L}(t)}dS_{\mathbf{x}}\int_{v_{x}\leqslant
V_{x}(t)}d\mathbf{v}[v_{x}-V_{x}(t)]^{2}f_{+}\left( t,\mathbf{x},\mathbf{v}%
\right)
\end{eqnarray*}%
Plugging in the boundary conditions (\ref{boundary condition: Right}) and (%
\ref{boundary condition: Left}), it becomes%
\begin{eqnarray}
F_{L,R}(t) &=&\int_{\partial \Omega _{R}\left( t\right) }dS_{\mathbf{x}%
}\int_{v_{x}\leqslant V(t)}d\mathbf{v}\left( V\left( t\right) -v_{x}\right)
^{2}f_{-}(t,\mathbf{x};\mathbf{v})-\int_{\partial \Omega _{L}\left( t\right)
}dS_{\mathbf{x}}\int_{v_{x}\geqslant V(t)}d\mathbf{v}\left( V\left( t\right)
-v_{x}\right) ^{2}f_{-}(t,\mathbf{x};\mathbf{v})  \notag \\
&&+\alpha \int_{\partial \Omega _{R}\left( t\right) }dS_{\mathbf{x}%
}\int_{v_{x}\geqslant V(t)}d\mathbf{v}\left( v_{x}-V\left( t\right) \right)
^{2}f_{-}(t,\mathbf{x};2V(t)-v_{x},v_{\bot })  \notag \\
&&+(1-\alpha )\int_{\partial \Omega _{R}\left( t\right) }dS_{\mathbf{x}%
}\int_{v_{x}\geqslant V(t)}d\mathbf{v}\left( v_{x}-V\left( t\right) \right)
^{2}\int_{u_{x}\leqslant V\left( t\right) }d\mathbf{u}K\left( \mathbf{v}-%
\mathbf{i}V\left( t\right) ;\mathbf{u}-\mathbf{i}V\left( t\right) \right)
f_{-}(t,\mathbf{x};\mathbf{u})  \notag \\
&&-\alpha \int_{\partial \Omega _{L}\left( t\right) }dS_{\mathbf{x}%
}\int_{v_{x}\leqslant V(t)}d\mathbf{v}\left( v_{x}-V\left( t\right) \right)
^{2}f_{-}(t,\mathbf{x};2V(t)-v_{x},v_{\bot })  \notag \\
&&-(1-\alpha )\int_{\partial \Omega _{L}\left( t\right) }dS_{\mathbf{x}%
}\int_{v_{x}\leqslant V(t)}d\mathbf{v}\left( v_{x}-V\left( t\right) \right)
^{2}\int_{u_{x}\geqslant V\left( t\right) }d\mathbf{u}K\left( \mathbf{v}-%
\mathbf{i}V\left( t\right) ;\mathbf{u}-\mathbf{i}V\left( t\right) \right)
f_{-}(t,\mathbf{x};\mathbf{u})  \notag \\
&=&\int_{\partial \Omega _{R}\left( t\right) }dS_{\mathbf{x}%
}\int_{v_{x}\leqslant V(t)}d\mathbf{v}\ \ell \left( \mathbf{v}-\mathbf{i}%
V\left( t\right) \right) f_{-}(t,\mathbf{x};\mathbf{v})  \notag \\
&&-\int_{\partial \Omega _{L}\left( t\right) }dS_{\mathbf{x}%
}\int_{v_{x}\geqslant V(t)}d\mathbf{v}\ \ell \left( \mathbf{v}-\mathbf{i}%
V\left( t\right) \right) f_{-}(t,\mathbf{x};\mathbf{v}),
\label{formula:F(t) for product kernel}
\end{eqnarray}%
where $\ell (\mathbf{w})$ is defined in \eqref{ell}.
\end{proof}

\subsubsection{Force without Recollisions}

Putting the initial density $f_{0}(\mathbf{v})$ at the place of $f_{-}(t,%
\mathbf{x};\mathbf{v})$ in formula (\ref{formula:F(t) for product kernel}),
we get the \textit{fictitious force} 
\begin{equation*}
F_{0}(V) = F_{S}(V)+C\left( \int_{v_{x}\leqslant V}\ell(\mathbf{v}-\mathbf{i}%
V)f_{0}(\mathbf{v})d\mathbf{v}-\int_{v_{x}\geqslant V}\ell(\mathbf{v}-%
\mathbf{i}V)f_{0}(\mathbf{v})d\mathbf{v}\right) ,
\end{equation*}
where $C$ is the area of the ends of the cylinder. This is the force on the
cylinder if all the collisions occurring before time $t$ were ignored. The
basic properties of the fictitious force are stated in the next lemma.

\begin{lemma}
\label{fictitious lemma} Suppose $f_{0}(\mathbf{v})\ge0$\ is even,
continuous and $\not\equiv0$. If $\ell\in C^1$ and $\partial _{w_{x}}\ell(%
\mathbf{w}) < 0$ 
for $w_{x}\in (-\infty ,0)$, then $F_{0}(V)$ is a positive, increasing $%
C^{1} $ function of $V$.
\end{lemma}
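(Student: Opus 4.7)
My plan starts by circumventing the non-differentiability of $f_0$ (which is only continuous) through a change of variables that transfers all the $V$-dependence onto the $C^1$ function $\ell$. Explicitly, I would substitute $w = v_x - V$ in the defining formula, split at $w=0$, flip the $\{w\leq 0\}$ piece using the evenness of $\ell$ in $w_x$ (inherited from the evenness of $K$), and then substitute $a = V-w$ in one half and $b = V+w$ in the other. This yields the representation
\begin{equation*}
F_0(V)-F_S(V)=C\int dv_\perp\!\left[\int_{-\infty}^{V}\ell(V-a,v_\perp)f_0(a,v_\perp)\,da-\int_{V}^{\infty}\ell(b-V,v_\perp)f_0(b,v_\perp)\,db\right].
\end{equation*}
Evaluating at $V=0$, the two inner integrals coincide after the substitution $a\mapsto -a$ together with the evenness of $f_0$ in $v_x$, so the bracket vanishes. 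Combined with $F_S(0)=0$ (immediate if $F_S\equiv 0$, otherwise from the computation in the proof of Lemma~\ref{alt lateral force}), this gives $F_0(0)=0$.

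Next I would differentiate in $V$. Because the second integral enters with a minus sign, the two Leibniz boundary contributions at $a=V$ and $b=V$ add rather than cancel, producing $2\ell(0,v_\perp)f_0(V,v_\perp)\geq 0$. The interior derivatives contribute
\begin{equation*}
\int_{-\infty}^{V}\partial_{w_x}\ell(V-a,v_\perp)\,f_0(a,v_\perp)\,da+\int_{V}^{\infty}\partial_{w_x}\ell(b-V,v_\perp)\,f_0(b,v_\perp)\,db.
\end{equation*}
By evenness of $\ell$ in $w_x$, the hypothesis $\partial_{w_x}\ell<0$ for $w_x<0$ upgrades to $\partial_{w_x}\ell>0$ for $w_x>0$. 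Since $V-a>0$ throughout the first integral and $b-V>0$ throughout the second, both integrands are nonnegative. Together with $F_S'(V)\geq 0$, this gives $F_0'(V)\geq 0$.

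For strict positivity of $F_0'$ I would use that $f_0\not\equiv 0$, continuous, and nonnegative, so $f_0>0$ on a nonempty open set $U$, and $U$ is symmetric in $v_x$ because $f_0$ is even. For any $V>0$, if $U\cap\{v_x<V\}$ were empty then $U\subseteq\{v_x\geq V\}$, and by symmetry also $U\subseteq\{v_x\leq -V\}$, whence $U\subseteq\{v_x\geq V\}\cap\{v_x\leq -V\}=\emptyset$, a contradiction. Hence the first volume integral is strictly positive on a set of positive $v_\perp$-measure, giving $F_0'(V)>0$, and $F_0(V)>F_0(0)=0$ for $V>0$. Finally, $F_0\in C^1$ follows from $\ell\in C^1$, continuity of $f_0$, and the integrability hypotheses (implicit in the assumptions on $K$ and $f_0$) that dominate the differentiated integrand uniformly on compact $V$-intervals. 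The main obstacle at the outset is purely that $f_0$ need not be differentiable; the initial change of variables dissolves it entirely by placing every $V$ inside the $C^1$ function $\ell$.
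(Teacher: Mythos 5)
Your argument is correct, and the monotonicity half of it is essentially identical to the paper's: after using the evenness of $\ell$ and $f_0$ in the horizontal variable, the $V$-dependence sits only in the $C^1$ function $\ell$ and in the integration limits, the Leibniz boundary terms are harmless because $\ell(0)\geq 0$ (in fact $\ell(0)=0$), and the interior terms have a definite sign by $\partial_{w_x}\ell<0$ on $(-\infty,0)$ combined with oddness of $\partial_{w_x}\ell$. The genuine difference is in how you get positivity of $F_0$. The paper proves it \emph{directly}: for $V>0$ it shrinks the $\{v_x\leq V\}$ integral to $\{v_x\leq -V\}$, obtaining
\begin{equation*}
F_{0}(V)\geqslant C\int_{v_{x}\leqslant -V}\bigl(\ell(\mathbf{v}-\mathbf{i}V)-\ell(\mathbf{v}+\mathbf{i}V)\bigr)f_{0}(\mathbf{v})\,d\mathbf{v}>0,
\end{equation*}
using only that $\ell$ is decreasing on $(-\infty,0)$. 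You instead establish $F_0(0)=0$ from the evenness of $f_0$ and $\ell$ and then integrate the strictly positive derivative. Both are short and elementary; the paper's estimate is marginally more direct and makes the strict positivity manifest via a single pointwise inequality, whereas your route has the small extra bonus of recording $F_0(0)=0$ explicitly. One minor comment: the ``change of variables'' you describe at the outset is really just a relabeling using the evenness of $\ell$ (the original formula already places $V$ inside $\ell$ and in the integration limits, never inside $f_0$), so the concern about $f_0$'s non-differentiability was already resolved by the shape of the defining expression; it did not require a preparatory substitution.
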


\begin{proof}
First we have 
\begin{eqnarray*}
F_{0}(V) &=&F_{S}(V)+C\left( \int_{v_{x}\leqslant V}\ell(\mathbf{v}-\mathbf{i%
}V)f_{0}(\mathbf{v})d\mathbf{v}-\int_{v_{x}\geqslant V}\ell(\mathbf{v}-%
\mathbf{i}V)f_{0}(\mathbf{v})d\mathbf{v}\right) \\
&=&F_{S}(V)+C\left( \int_{v_{x}\leqslant V}\ell(\mathbf{v}-\mathbf{i}V)f_{0}(%
\mathbf{v})d\mathbf{v}-\int_{v_{x}\leqslant -V}\ell(\mathbf{v}+\mathbf{i}%
V)f_{0}(\mathbf{v})d\mathbf{v}\right) \\
&\geqslant &C\int_{v_{x}\leqslant -V}\left( \ell(\mathbf{v}-\mathbf{i}%
V)-\ell( \mathbf{v}+\mathbf{i}V)\right) f_{0}(\mathbf{v})d\mathbf{v} > 0,
\end{eqnarray*}
because $v_{x}-V\leqslant v_{x}+V$ and $\partial _{w_{x}}\ell(\mathbf{w}) <
0 $ for $w_{x}\in (-\infty ,0).$ Using the monotonicity of $\ell$ again, we
deduce the monotonicity of $F_{0}(V)$ by%
\begin{eqnarray*}
F_{0}^{\prime }(V) &=&-\int_{\mathbf{v}\cdot \mathbf{n\leqslant }0}d\mathbf{v%
}\left( \mathbf{v}\cdot \mathbf{n}\right) f_{0}(\mathbf{v}) \\
&& +C\left( -\int_{v_{x}\leqslant V}\left( \partial _{w_{x}}\ell\right) (%
\mathbf{v}-\mathbf{i}V)f_{0}(\mathbf{v})d\mathbf{v}-\int_{v_{x}\leqslant
-V}\left( \partial _{w_{x}}\ell\right) (\mathbf{v}+\mathbf{i}V)f_{0}(\mathbf{%
v})d\mathbf{v}\right) > 0.
\end{eqnarray*}
\end{proof}


\section{Iteration Scheme}

\subsection{The Iteration Family}

\begin{definition}
\label{def:W}We define $\mathcal{W}$ as the family of functions $W$ that
satisfy the following conditions.

(i) $W:[0,\infty )\rightarrow \mathbb{R}$ is Lipschitz and $W(0)=V_{0}$.

(ii) $W$ is strictly increasing over the interval $[0,t_{0}]$ for some $%
t_{0} $ depending on $\gamma =V_{\infty }-V_{0}.$

(iii) There exist bounded functions $h(t)=h(t,\gamma)$ and $g(t)=g(t,\gamma)$
such that for all $W\in \mathcal{W}$, $t\in \lbrack 0,\infty )$ and $\gamma
\in (0,1)$, 
\begin{equation}
0<\gamma h(t,\gamma )\leqslant V_{\infty }-W(t) < \gamma g(t,\gamma ).
\label{g and h}
\end{equation}
\end{definition}

We do not assume that $W(t)$ is increasing in $[t_0,\infty)$. Specific
choices for the functions $g$ and $h$ will be made later.  For any function $%
Y:[0,\infty )\rightarrow \mathbb{R}$, we denote its average over time
intervals by 
\begin{equation*}
\left\langle Y\right\rangle _{s,t}=\frac{1}{t-s}\int_{s}^{t}Y\left( \tau
\right) d\tau ,\qquad \left\langle Y\right\rangle _{0,t}=\left\langle
Y\right\rangle _{t}.
\end{equation*}%
Thus $Y\in L^1(\mathbb{R})$ implies $\langle Y \rangle_t = O(1/t)$. The
family $\mathcal{W}=\left\{ W\right\} $ has the following properties.

\begin{lemma}
\label{Lemma:The class of W} If $\langle h\rangle _{t}>g(t)$ for all $%
t\geqslant t_{0}$, then

(i) \ $W\left( t\right) >\left\langle W\right\rangle _{t}$, $\forall t>0.$

(ii) \ $\left\langle W\right\rangle _{t}$ is an increasing function. In
particular, $V_{0}\leqslant \left\langle W\right\rangle _{t}\leqslant
W\left( t\right) \leqslant V_{\infty }.$

(iii) \ $\left\langle W\right\rangle _{s,t}>\left\langle W\right\rangle _{t}$%
, $\forall s\in \left( 0,t\right) .$

(iv) \ $\gamma \left[ \langle h\rangle _{t}-g\left( t\right) \right] \ \chi
\{t\geqslant t_{0}\}\leqslant W(t)-\left\langle W\right\rangle _{t}\leqslant
\gamma \langle g\rangle _{t}. $
\end{lemma}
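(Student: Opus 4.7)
The plan is to first translate the two-sided bound in Definition \ref{def:W}(iii) into a useful form for both $W(t)$ and its time average. Writing $U(t)=V_\infty-W(t)$, we have the pointwise sandwich $\gamma h(t)\le U(t)<\gamma g(t)$, and integrating from $0$ to $t$ gives $\gamma\langle h\rangle_t\le \langle U\rangle_t<\gamma\langle g\rangle_t$, i.e.
\[
V_\infty-\gamma\langle g\rangle_t<\langle W\rangle_t\le V_\infty-\gamma\langle h\rangle_t.
\]
These two displays are the workhorses for everything that follows.

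To prove (i), I would split at $t_0$. For $t\in(0,t_0]$, strict monotonicity of $W$ on $[0,t_0]$ immediately yields $\langle W\rangle_t<W(t)$. For $t>t_0$, combine $W(t)>V_\infty-\gamma g(t)$ with $\langle W\rangle_t\le V_\infty-\gamma\langle h\rangle_t$ and invoke the standing hypothesis $\langle h\rangle_t>g(t)$ to conclude
\[
\langle W\rangle_t\le V_\infty-\gamma\langle h\rangle_t<V_\infty-\gamma g(t)<W(t).
\]
This is exactly the step where the hypothesis of the lemma is essential; this is also what I expect to be the conceptual heart (the other parts are bookkeeping).

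Part (ii) then follows from the identity
\[
\frac{d}{dt}\langle W\rangle_t=\frac{W(t)-\langle W\rangle_t}{t},
\]
which is positive by (i), so $\langle W\rangle_t$ is strictly increasing; since $\langle W\rangle_{0^+}=V_0$ by continuity and $W(t)<V_\infty$ by Definition \ref{def:W}(iii), the chain $V_0\le\langle W\rangle_t\le W(t)\le V_\infty$ is immediate. For (iii), I would use the splitting identity
\[
t\,\langle W\rangle_t=s\,\langle W\rangle_s+(t-s)\,\langle W\rangle_{s,t},\qquad 0<s<t,
\]
together with the monotonicity $\langle W\rangle_s<\langle W\rangle_t$ from (ii), to rearrange
\[
(t-s)\,\langle W\rangle_{s,t}=t\,\langle W\rangle_t-s\,\langle W\rangle_s>(t-s)\,\langle W\rangle_t.
\]

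Finally, for (iv), the upper bound comes from writing $W(t)-\langle W\rangle_t=\langle U\rangle_t-U(t)$ and using $\langle U\rangle_t<\gamma\langle g\rangle_t$ together with $U(t)\ge 0$. The lower bound, for $t\ge t_0$, uses $W(t)>V_\infty-\gamma g(t)$ and $\langle W\rangle_t\le V_\infty-\gamma\langle h\rangle_t$ to give $W(t)-\langle W\rangle_t>\gamma(\langle h\rangle_t-g(t))$, which is nonnegative by hypothesis; for $t<t_0$ the indicator makes the lower bound trivial. None of these steps should present serious difficulty beyond keeping track of the inequalities.
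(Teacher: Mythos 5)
Your proposal is correct and follows essentially the same route as the paper: part (i) is split at $t_0$ (monotonicity below, the hypothesis $\langle h\rangle_t > g(t)$ combined with the sandwich from Definition~\ref{def:W}(iii) above), (ii) comes from the derivative identity $\frac{d}{dt}\langle W\rangle_t = t^{-1}(W(t)-\langle W\rangle_t)$, (iii) is the same algebraic rearrangement, and (iv) uses the same integrated sandwich. Your substitution $U = V_\infty - W$ and the splitting identity $t\langle W\rangle_t = s\langle W\rangle_s + (t-s)\langle W\rangle_{s,t}$ are just cleaner bookkeeping for identical steps; no new idea or genuinely different decomposition is involved.
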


\begin{proof}
When $t\leqslant t_{0}$, (i) follows from the assumption that $W$ is
increasing. When $t\geqslant t_{0}$, we have%
\begin{eqnarray*}
W(t)-\left\langle W\right\rangle _{t} &=&\frac{1}{t}\int_{0}^{t}\left[
\left( V_{\infty }-W\left( \tau \right) \right) -\left( V_{\infty }-W\left(
t\right) \right) \right] d\tau \\
&\geqslant &\frac{1}{t}\int_{0}^{t}\left[ \gamma h(\tau)-\gamma g(t)\right]
d\tau \\
&\geqslant &\gamma \left( \frac{1}{t}\int_{0}^{t}h(\tau)d\tau-g(t)\right) >0
\end{eqnarray*}%
by assumption. This proves (i) and part of (iv). Now

\begin{equation*}
\frac{d}{dt}\left\langle W\right\rangle _{t}=\frac{1}{t}\left( -\left\langle
W\right\rangle _{t}+W\left( t\right) \right) >0
\end{equation*}%
by (i). Thus (ii) is true. Moreover,

\begin{eqnarray*}
\left\langle W\right\rangle _{s,t}-\left\langle W\right\rangle _{t} &=&\frac{%
1}{t-s}\int_{s}^{t}W\left( \tau \right) d\tau -\frac{1}{t}%
\int_{0}^{t}W\left( \tau \right) d\tau \\
&=&\frac{1}{t-s}\int_{0}^{t}W\left( \tau \right) d\tau -\frac{1}{t-s}%
\int_{0}^{s}W\left( \tau \right) d\tau -\frac{1}{t}\int_{0}^{t}W\left( \tau
\right) d\tau \\
&=&\frac{s}{t-s}\left( \frac{1}{t}\int_{0}^{t}W\left( \tau \right) d\tau -%
\frac{1}{s}\int_{0}^{s}W\left( \tau \right) d\tau \right) >0
\end{eqnarray*}%
by (ii). Finally, 
\begin{eqnarray*}
W(t)-\left\langle W\right\rangle _{t} &=&\frac{1}{t}\int_{0}^{t}\left[
\left( V_{\infty }-W\left( \tau \right) \right) -\left( V_{\infty }-W\left(
t\right) \right) \right] d\tau \\
&\leqslant &\frac{1}{t}\int_{0}^{t}\left[ \gamma g(\tau )-\gamma h(t)\right]
d\tau \leqslant \frac{\gamma }{t}\int_{0}^{t}g(\tau )d\tau .
\end{eqnarray*}
\end{proof}

The key step in the proof of the theorems will be to prove that $%
r_{W}^{R}\left( t\right) +r_{W}^{L}\left( t\right) \geqslant 0$ where we
define 
\begin{eqnarray*}
r_{W}^{R}\left( t\right) &=&\int_{\partial \Omega _{R}\left( t\right) }dS_{%
\mathbf{x}}\int_{u_{x}\leqslant W\left( t\right) }d\mathbf{u}\ \ell(\mathbf{u%
}-\mathbf{i}W(t))\left\{ f_{-}(t,\mathbf{x},\mathbf{u})-f_{0}(\mathbf{u}%
)\right\} , \\
r_{W}^{L}\left( t\right) &=&\int_{\partial \Omega _{L}\left( t\right) }dS_{%
\mathbf{x}}\int_{u_{x}\geqslant W\left( t\right) }d\mathbf{u}\ \ell(\mathbf{u%
}-\mathbf{i}W(t))\left\{ f_{-}(t,\mathbf{x},\mathbf{u})-f_{0}(\mathbf{u}%
)\right\} .
\end{eqnarray*}%
They represent the forces on the right and left of the cylinder due to the 
\textit{precollisions}, that is, all the collisions occurring \textit{before}
time $t$. This will be accomplished via a lower bound of $r_{W}^{R}\left(
t\right) $ (Lemma \ref{Lemma:Upper and lower bound of R+}) and an upper
bound of $\left\vert r_{W}^{L}\left( t\right) \right\vert $ (Lemma \ref%
{Lemma:UpperBoundOfR-}). Then we will be able to determine $g$ and $h$ via
the requirement that $\mathcal{W}=\left\{ W\right\} $ is closed under the
map $W \rightarrow V_{W}$.

Before beginning the detailed estimates, we consider the meaning of a
precollision. In order for a particle to have collisions at two times $t$
and $s$ with $s<t$, it is obviously required that 
\begin{equation*}
\int_s^t v(\tau) d\tau = \int_s^t W(\tau) d\tau .
\end{equation*}
In order to have no collisions in between $s$ and $t$, it is necessary that 
\begin{equation*}
(t-s)v_{x}=\int_{s}^{t}W(\tau )d\tau ,\qquad\left\vert v_{\bot }\right\vert
\leqslant \frac{2r}{t-s} ,
\end{equation*}%
where $r$ is the radius of the cylinder. Since $\left\langle W\right\rangle
_{s,t}$ is a continuous function of $s$ for any $t$, the existence of a
precollision at some time earlier than $t$ requires that 
\begin{eqnarray}  \label{velocitybound}
v_{x} &\in &\left[ \inf\limits_{s<t}\left\langle W\right\rangle
_{s,t},\sup\limits_{s<t}\left\langle W\right\rangle _{s,t}\right] = \left[
\left\langle W\right\rangle _{t},\sup\limits_{s<t}\left\langle
W\right\rangle _{s,t}\right] ,  \label{condition:recollision condition} \\
\left\vert v_{\bot }\right\vert &\leqslant &\frac{2r}{t-s} .  \notag
\end{eqnarray}
We will estimate $r_{W}^{R}\left( t\right) $ and $r_{W}^{L}\left( t\right) $
by taking only one precollision into account.

\subsection{Assumptions on $K$ and $f_{0}$}

We make the following assumptions on the collision kernel $K$ for the ends
of the cylinder and on the initial particle density $f_{0}$, in addition to
the previously stated assumptions that $K(\mathbf{v},\mathbf{u})$ and $f_0(%
\mathbf{v})$ are nonnegative, even in $v_x$ and $u_x$, and \eqref{kernelmass}
is valid. The first assumption below implies that at the boundary the
momentum is transferred only horizontally.

\noindent \textbf{A1.} Let $K$ and $f_{0}$ have the product form 
\begin{eqnarray*}
f_{0}(\mathbf{v}) &=&a_{0}(v_{x})b(v_{\bot }), \\
K(\mathbf{v,u}) &=&k(v_{x},u_{x})b(v_{\bot }),\qquad \int b(v_{\bot
})dv_{\bot }=1,
\end{eqnarray*}%
with each factor nonnegative and continuous and $f_{0}$ bounded.

Thus $a_0$ and $k$ are even. Under Assumption A1, $\ell(\mathbf{w})$
actually depends only on $w_{x}$, that is, 
\begin{equation*}
\ell (\mathbf{w})=(1+\alpha )w_{x}^{2}+(1-\alpha )\int_{v_{x}\geq 0}dv_x \
v_{x}^{2}\ k(v_x, w_x) = \ell(w_x).
\end{equation*}
Therefore, at any later time, $f_{+}$ and $f_{-}$ must take the product form 
\begin{equation*}
f_{+}(t,\mathbf{x};\mathbf{v}) = a_{+}(t,\mathbf{x;}v_{x})b(v_{\bot }),
\quad f_{-}(t,\mathbf{x};\mathbf{v}) = a_{-}(t,\mathbf{x;}v_{x})b(v_{\bot }).
\end{equation*}
It is then natural to ask whether the analysis is purely one-dimensional. In
fact, the dimension does come into play as will be demonstrated in Lemmas %
\ref{Lemma:Upper and lower bound of R+} and \ref{Lemma:UpperBoundOfR-}.
\bigskip

\noindent \textbf{A2.}%
\begin{equation*}
\sup_{\left\vert u_{x}\right\vert \leqslant \gamma }\sup_{v_{x}\in \mathbb{R}%
}k(v_{x},u_{x})<\infty .
\end{equation*}

\noindent \textbf{A3.} There is a power $0\leq p\leq 2$ and there are
positive constants $C$ and $c$ such that 
\begin{equation*}
c\left\vert u_{x}\right\vert ^{p}\leqslant \ \int_{v_{x}\geq 0}\ v_{x}^{2}\
k(v_{x},u_{x})\ dv_{x}\ \leqslant C\left\vert u_{x}\right\vert ^{p}
\end{equation*}%
for $u_{x}\in \left[ -\gamma ,0\right) $ We also assume that this integral
is a $C^{1}$ function of $u_{x}$ for $u_{x}\neq 0$. Note that A3 and A1
imply that $(1-\alpha )c|u_{x}|^{p}\leq \ell (u_{x})\leq (1+\alpha
)u_{x}^{2}+(1-\alpha )C|u_{x}|^{p}$.

\bigskip

\noindent \textbf{A4.}%
\begin{equation*}
\sup_{v_{x}\in \lbrack -\gamma ,0]}\ \sup_{\eta \in \lbrack V_{0},V_{\infty
}]}\int_{-\infty }^{V_{\infty }}k(v_{x},u_{x}-\eta
)a_{0}(u_{x})du_{x}<\infty .
\end{equation*}

\noindent \textbf{A5.}There exists $\delta >0$ such that 
\begin{equation*}
(1-\alpha )\inf_{v_{x}\in \lbrack -\gamma ,0]}\inf_{\eta \in \lbrack
V_{0},V_{\infty }]}\int_{-\infty }^{V_{0}}k(v_{x},u_{x}-\eta
)a_{0}(u_{x})du_{x}\geq a_{0}(V_{\infty })+\delta .
\end{equation*}

\section{Examples of Collision Kernels}

\begin{example}
\label{example:aoki}The kernel 
\begin{equation*}
K(\mathbf{v},\mathbf{u})=C_{2}e^{-\beta \left\vert \mathbf{v}\right\vert
^{2}}\left\vert u_{x}\right\vert
\end{equation*}%
and the initial density 
\begin{equation*}
f_{0}(\mathbf{v})=C_{1}e^{-\beta \left\vert \mathbf{v}\right\vert ^{2}}
\end{equation*}%
were the subject of \cite{Ital3}. They satisfy all the Assumptions A1-A5.
The constant $C_{2}$ is determined so as to satisfy \eqref{kernelmass}.
Indeed, it is obvious that they satisfy A1-A4. 
In order to verify A5, we note that 
\begin{equation}
(1-\alpha )C_{2}\inf_{v_{x}\in \lbrack -\gamma ,0]}\inf_{\eta \in \lbrack
V_{0},V_{\infty }]}\int_{-\infty }^{V_{0}}e^{-\beta v_{x}^{2}}\left\vert
u_{x}-\eta \right\vert e^{-\beta u_{x}^{2}}du_{x}=(1-\alpha )C_{2}C^{\ast
}e^{-\beta \gamma ^{2}},  \label{verifyA5}
\end{equation}%
where 
\begin{equation*}
C^{\ast }=\inf_{\eta \in \lbrack V_{0},V_{\infty }]}\int_{\infty
}^{V_{0}}(\eta -u_{x})e^{-\beta u_{x}^{2}}du_{x}=V_{0}\int_{\infty
}^{V_{0}}e^{-\beta u_{x}^{2}}du_{x}+\frac{1}{2\beta }e^{-\beta V_{0}^{2}}
\end{equation*}%
depends only on $V_{0}$ and $\beta $. We may consider $V_{\infty }$ as fixed
and $\gamma $ as small and then $V_{0}=V_{\infty }-\gamma $. From (\ref%
{verifyA5}) we require $(1-\alpha )C^{\ast }e^{-\beta \gamma ^{2}}>e^{-\beta
V_{\infty }^{2}}+\delta $ for some $\delta >0$ and all sufficiently small $%
\gamma $. Thus all we require is that 
\begin{equation*}
(1-\alpha )C_{2}\left[ V_{\infty }\int_{\infty }^{V_{\infty }}e^{-\beta
u_{x}^{2}}du_{x}+\frac{1}{2\beta }e^{-\beta V_{\infty }^{2}}\right]
>e^{-\beta V_{\infty }^{2}}.
\end{equation*}%
Because of the second term, A5 is true provided 
\begin{equation}  \label{beta small}
\beta <\frac{1-\alpha }{2}C_{2},
\end{equation}
which is a different kind of condition than in \cite{Ital3}. Using instead
the first term, we note that 
\begin{equation*}
V_{\infty }\int_{\infty }^{V_{\infty }}e^{-\beta u_{x}^{2}}du_{x}\geq
V_{\infty }\int_{-\infty }^{0}e^{-\beta u_{x}^{2}}dx=V_{\infty }\sqrt{\frac{%
\pi }{4\beta }},
\end{equation*}%
so that A5 is also satisfied if 
\begin{equation}  \label{Vinfinity large}
V_{\infty }e^{\beta V_{\infty }^{2}}>\frac{\sqrt{4\beta }}{(1-\alpha )C_{2}%
\sqrt{\pi }}.
\end{equation}
In \cite{Ital3} the condition was that $V_\infty$ be sufficiently large
without specifying how large. The inequality \eqref{Vinfinity large} is a
precise condition.
\end{example}


\begin{example}
\label{example:p=3/2}Choose 
\begin{equation*}
K(\mathbf{v,u})=C_{2}e^{-\frac{v_{x}^{2}}{\left\vert u_{x}\right\vert }%
}b(v_{\bot }),\text{ }f_{0}(\mathbf{v})=a_{0}(v_{x})b(v_{\bot }),
\end{equation*}%
where once again $C_{2}$ is chosen so that \eqref{kernelmass} is satisfied, $%
a_{0}\in L^{1}(\mathbb{R})$, and $\int bdv_{\bot }=1.$ A2 is easily
satisfied, because $0\leqslant e^{-\frac{v_{x}^{2}}{\left\vert
u_{x}\right\vert }}\leqslant 1$. Since 
\begin{equation*}
C_{2}\int_{0}^{\infty }v_{x}^{2}e^{-\frac{v_{x}^{2}}{\left\vert
u_{x}\right\vert }}dv_{x}=C\left\vert u_{x}\right\vert ^{\frac{3}{2}},
\end{equation*}%
A3 is satisfied with $p=\frac{3}{2}.$ We also have%
\begin{equation*}
\sup_{v_{x}}\sup_{\eta }\int_{-\infty }^{V_{\infty }}e^{-\frac{v_{x}^{2}}{%
\left\vert u_{x}-\eta \right\vert }}a_{0}(u_{x})du_{x}\leqslant \int_{%
\mathbb{R}}a_{0}(u_{x})du_{x},
\end{equation*}%
which verifies A4. To test A5, we notice that%
\begin{eqnarray*}
&&C_{2}(1-\alpha )\inf_{v_{x}\in \lbrack -\gamma ,0]}\inf_{\eta \in \lbrack
V_{0},V_{\infty }]}\int_{-\infty }^{V_{0}}e^{-\frac{v_{x}^{2}}{\left\vert
u_{x}-\eta \right\vert }}a_{0}(u_{x})du_{x} \\
&\geqslant &C_{2}(1-\alpha )\inf_{v_{x}\in \lbrack -\gamma ,0]}\inf_{\eta
\in \lbrack V_{0},V_{\infty }]}\int_{-\infty }^{V_{\infty }-1}e^{-\frac{%
v_{x}^{2}}{\left\vert u_{x}-\eta \right\vert }}a_{0}(u_{x})du_{x} \\
&\geqslant &C_{2}(1-\alpha )e^{-\gamma ^{2}}\int_{-\infty }^{V_{\infty
}-1}a_{0}(u_{x})du_{x}.
\end{eqnarray*}%
Thus if, for small enough $\gamma $, we have%
\begin{equation*}
\int_{-\infty }^{V_{\infty }-1}a_{0}(u_{x})du_{x}>\frac{a_{0}(V_{\infty })}{%
C_{2}(1-\alpha )},
\end{equation*}%
then A5 is satisfied.

The physical interpretation of such a choice of kernel is the following.
Notice that 
\begin{equation*}
k\left( v_{x}-V(t),u_{x}-V(t)\right) =C_{2}e^{-\frac{\left(
v_{x}-V(t)\right) ^{2}}{\left\vert u_{x}-V(t)\right\vert }}.
\end{equation*}%
Thus if $\left\vert u_{x}-V(t)\right\vert $ is big, then there is a wide
range of possible emitted velocities. On the other hand, if $\left\vert
u_{x}-V(t)\right\vert $ is small, meaning that the incident particle and the
body move at almost the same speed, then the same is true for the emitted
particles with high probability.
\end{example}

It is then natural to wonder if we can have a family of kernels such that it
covers a continuous range of $p.$ This is simply achieved by modifying
Example \ref{example:p=3/2}.

\begin{example}
\label{example:range of p}For $\beta \in \left[ -1,3\right] $, consider 
\begin{equation*}
K(\mathbf{v,u})=C_{2}\left\vert u_{x}\right\vert ^{\beta } e^{- {v_{x}^{2}} {%
\left\vert u_{x}\right\vert ^{\beta -1}}} b(v_{\bot }), \quad f_{0}(\mathbf{v%
})=a_{0}(v_{x})b(v_{\bot }),
\end{equation*}%
where $C_{2}$ is chosen so that \eqref{kernelmass} is satisfied, while $%
a_{0} $ and $b$ are as in Example \ref{example:p=3/2}. We then have%
\begin{equation*}
C_{2}\left\vert u_{x}\right\vert ^{\beta }\int_{0}^{\infty }v_{x}^{2} \ e^{- 
{v_{x}^{2}} {\left\vert u_{x}\right\vert ^{\beta -1}}} dv_{x}=C\left\vert
u_{x}\right\vert ^{\frac{3-\beta }{2}}.
\end{equation*}
Thus $p$ runs through $\left[ 0,{2}\right] $ as $\beta$ runs through $\left[%
-1,3\right] .$ In particular, if $\beta=1$ we have Example \ref{example:aoki}%
. If $\beta = 0$ we have Example \ref{example:p=3/2}. The same physical
interpretation as in Example \ref{example:p=3/2} holds if $\beta \in \left[
-1,1\right)$, meaning that $p\in(1,2]$.
\end{example}

It is also natural to inquire whether a gaussian is needed. Actually, it
just suffices to have some good decay, as we now illustrate.

\begin{example}
Let us choose 
\begin{equation*}
K(\mathbf{v},\mathbf{u})=C_{2}|u_{x}|\langle v_{x}\rangle ^{-N}\langle
v_{\perp }\rangle ^{-M},\quad f_{0}(\mathbf{v})=\langle v_{x}\rangle
^{-P}\langle v_{\perp }\rangle ^{-M},
\end{equation*}%
where $C_{2}$ is chosen so that \eqref{kernelmass} is satisfied and $%
M>2,N>3,P>2$. Assumption A1 is true because $M>2$. A2 is obvious. A3 is true
because $N>3$. A4 is true because $P>2$. A5 requires 
\begin{equation*}
C_{2}\int_{-\infty }^{V_{0}}(V_{0}-u_{x})\langle u_{x}\rangle
^{-P}du_{x}>\langle V_{\infty }\rangle ^{-P},
\end{equation*}%
which is true for instance if $V_{\infty }$ is sufficiently large. One can
also modify this example to cover a range of $p$ instead of only $p=1$.
\end{example}


\section{Main Estimates of the Force}

\subsection{The Right Side}

In the next lemma we estimate the force on the right side of the cylinder.

\begin{lemma}
\label{Lemma:Upper and lower bound of R+} Let $K$ and $a_{0}$ satisfy the
Assumptions A1-A5. Then for all sufficiently small $\gamma $ we have the
inequalities 
\begin{equation*}
\frac{C\gamma ^{p+1}\chi \{t\geqslant t_{0}\}}{t^{d-1}}\left( \langle
h\rangle _{t}-g\left( t\right) \right) ^{p+1}\leqslant r_{W}^{R}\left(
t\right) \leqslant \frac{C\gamma ^{p+1}\langle g\rangle _{t}^{p+1}}{\left(
1+t\right) ^{d-1}}+C\gamma ^{p+1}\sup_{\frac{t}{2}\leqslant \tau \leqslant
t}\langle g\rangle _{\tau ,t}^{p+1}.
\end{equation*}
\end{lemma}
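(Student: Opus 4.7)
The integrand in $r_W^R(t)$ vanishes unless the particle at $\mathbf{x}\in\partial\Omega_R(t)$ with $u_x\leq W(t)$ has had a prior collision, so the integral is supported on the precollision set characterized by \eqref{condition:recollision condition}: $u_x\in[\langle W\rangle_t,W(t)]$ has the unique form $\langle W\rangle_{s,t}$ for some $s\in(0,t)$ (by monotonicity of $\langle W\rangle_{s,t}$ in $s$, Lemma \ref{Lemma:The class of W}), and $|u_\perp|\leq 2r/(t-s)$, with precollision point $\mathbf{x}_s=\mathbf{x}-(t-s)\mathbf{u}\in\partial\Omega_R(s)$. Tracing the trajectory backward and applying \eqref{boundary condition: Right} at $(s,\mathbf{x}_s)$, together with A1 (so $K=kb$ and $\ell(\mathbf{u}-\mathbf{i}W(t))=\ell(u_x-W(t))$), yields, on the precollision set,
\[
f_-(t,\mathbf{x},\mathbf{u})=b(u_\perp)\Bigl[\alpha\,a_-\bigl(s,\mathbf{x}_s,2W(s)-u_x\bigr)+(1-\alpha)\!\int_{w_x\leq W(s)}\!k\bigl(u_x-W(s),w_x-W(s)\bigr)\,a_-\bigl(s,\mathbf{x}_s,w_x\bigr)\,dw_x\Bigr],
\]
while $f_-(t,\mathbf{x},\mathbf{u})=f_0(\mathbf{u})$ elsewhere.

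\textbf{Lower bound.} In the diffusive integral I restrict to $w_x\leq V_0$, where $a_-(s,\mathbf{x}_s,w_x)=a_0(w_x)$ since any precollision at time $s$ forces $w_x\geq\langle W\rangle_s\geq V_0$ by Lemma \ref{Lemma:The class of W}(ii). Assumption A5 (using the evenness of $k$ to accommodate $u_x-W(s)\in[0,\gamma]$) then gives
\[
(1-\alpha)\!\int_{-\infty}^{V_0}\!k\bigl(u_x-W(s),w_x-W(s)\bigr)a_0(w_x)\,dw_x\;\geq\;a_0(V_\infty)+\delta,
\]
and continuity of $a_0$ at $V_\infty$ together with smallness of $\gamma$ yields $a_0(u_x)\leq a_0(V_\infty)+\delta/2$, hence $f_-(t,\mathbf{x},\mathbf{u})-f_0(\mathbf{u})\geq(\delta/2)\,b(u_\perp)$ throughout the precollision set. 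The geometric constraint confines $u_\perp$ to a ball of radius $r/(t-s)$ centered at $\mathbf{x}^\perp/(t-s)$; because $|\mathbf{x}^\perp|\leq r$ this ball always contains the origin, so $\int b(u_\perp)\,du_\perp\geq c\min\bigl(1,(r/(t-s))^{d-1}\bigr)\geq c(r/t)^{d-1}$ uniformly in $s$. Combined with $\int_{\partial\Omega_R}dS_{\mathbf{x}}\sim r^{d-1}$, the A3 lower bound $\ell(u_x-W(t))\geq c(1-\alpha)|u_x-W(t)|^p$, and
\[
\int_{\langle W\rangle_t}^{W(t)}|u_x-W(t)|^p\,du_x=\frac{(W(t)-\langle W\rangle_t)^{p+1}}{p+1}\geq c\gamma^{p+1}(\langle h\rangle_t-g(t))^{p+1}\chi\{t\geq t_0\}
\]
from Lemma \ref{Lemma:The class of W}(iv), this yields the required lower bound.

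\textbf{Upper bound.} I bound $|f_-(t,\mathbf{x},\mathbf{u})-f_0(\mathbf{u})|\leq Cb(u_\perp)$ on the precollision set, using A2 and A4 (so that $\|a_-(s,\cdot,\cdot)\|_\infty\leq C$ propagates through the reflection law), and apply A3 to obtain $\ell(u_x-W(t))\leq C|u_x-W(t)|^p+C|u_x-W(t)|^2$. I split the $s$-integral (equivalently the $u_x$-integral) at $s=t/2$. For $s\in[0,t/2]$, the perpendicular integral is bounded by $C(r/(t-s))^{d-1}\leq C/(1+t)^{d-1}$, and Lemma \ref{Lemma:The class of W}(iv) gives $(W(t)-\langle W\rangle_t)^{p+1}\leq\gamma^{p+1}\langle g\rangle_t^{p+1}$, producing the first term. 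For $s\in[t/2,t]$, no $(t-s)^{-(d-1)}$ decay is available and I bound the $u_\perp$-integral by $\int b\,du_\perp=1$; applying the proof of Lemma \ref{Lemma:The class of W}(iv) on $[s,t]$ gives $W(t)-\langle W\rangle_{s,t}\leq\gamma\langle g\rangle_{s,t}$, so the $u_x$-integral contributes $\leq C\gamma^{p+1}\sup_{t/2\leq\tau\leq t}\langle g\rangle_{\tau,t}^{p+1}$, producing the second term. The principal technical obstacle is the coupled change of variables $(u_x,u_\perp)\leftrightarrow(s,\mathbf{x}_s^\perp)$ (Jacobian $|u_x-W(s)|/(t-s)^d$) with carefully tracked ranges via Lemma \ref{Lemma:The class of W}, and the verification that the uniform perpendicular lower bound $(r/t)^{d-1}$ survives—this relies on the geometric disk always covering the origin where $b$ is bounded below.
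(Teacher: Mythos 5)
Your proof is correct and follows essentially the same path as the paper's: unwind each trajectory to its first precollision at time $s$, apply the boundary condition there, establish the uniform bounds $f_{+}\leq Cb$ (via A2, A4 and the geometric-series absorption of the specular term since $\alpha<1$ and $\gamma$ is small) and $f_{+}\geq(a_0(V_\infty)+\delta)b$ (via A5, restricting the diffusive integral to $w_x\leq V_0$ where no precollision can have occurred so $f_-=f_0$), and then split the remaining $u_x$-integral at $\tau=t/2$ and invoke A3 and Lemma~\ref{Lemma:The class of W}(iv). The only cosmetic difference is that you phrase the lower bound on the perpendicular integral through the precise offset disk containing the origin, while the paper uses the enclosing disk $|u_\perp|\leq 2r/(t-\tau)$; both hinge on the same implicit positivity of $b$ near $0$, and neither actually needs the change of variables $(u_x,u_\perp)\leftrightarrow(s,\mathbf{x}_s^\perp)$ you flag as the main obstacle.
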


We remark that it would seem that the second term in the upper bound of $%
r_{W}^{R}\left( t\right) $ should dominate. Such a statement is actually not
true. The first term always acts like $t^{-\left( d+p\right) }$ while the
second one is like $g^{p+1}$. But $g(t)$ has to act like $t^{-\left(
d+p\right) }$ for the sake of the fixed point argument. This will be
clarified in the proof of Corollary \ref{Prop:DeducingConditionsOnhAndg}.

\begin{proof}
To establish upper and lower bounds of $r_{W}^{R}\left( t\right) ,$ we need
upper and lower bounds of $f_{+}(t,\mathbf{x};\mathbf{v}).$ Recall the
boundary condition (\ref{boundary condition: Right}) on the right of the
cylinder%
\begin{equation*}
f_{+}(t,\mathbf{x};\mathbf{v})=\alpha f_{-}(t,\mathbf{x};2W(t)-v_{x},v_{\bot
})+(1-\alpha )\int_{u_{x}\leqslant W\left( t\right) }K\left( \mathbf{v}-%
\mathbf{i}W\left( t\right) ;\mathbf{u}-\mathbf{i}W\left( t\right) \right)
f_{-}(t,\mathbf{x};\mathbf{u})d\mathbf{u}.
\end{equation*}%
In light of condition (\ref{condition:recollision condition}), we denote the
precollision characteristic function by 
\begin{eqnarray*}
\chi _{0}(t,\mathbf{u}) &=&\chi \left\{ \mathbf{u:}\ \forall s\in (0,t),%
\text{ either }u_{x}\neq \left\langle W\right\rangle _{s,t}\text{ or }%
\left\vert u_{\bot }\right\vert >\frac{2r}{t-s}\right\} , \\
\chi _{1}(t,\mathbf{u}) &=&\chi \left\{ \mathbf{u:\exists }s\in (0,t)\text{
s.t. }u_{x}=\left\langle W\right\rangle _{s,t}\text{ and }\left\vert u_{\bot
}\right\vert \leqslant \frac{2r}{t-s}\right\} .
\end{eqnarray*}
In case the precollisions occurred at a sequence of earlier times $%
t_{j}\rightarrow t$, it would follow that $v_{x}=W(t)$, so there would be no
contribution to the force since $\ell(0)=0$. Thus we can assume that there
is a first precollision, that is, a collision that occurs at an earlier time
closest to $t$. In that case let $\tau $ be the time and $\mathbf{\xi }$ be
the position of that first precollision. Of course, $\tau $ and $\mathbf{\xi 
}$ depend on $t,\mathbf{{x},{u}}$. We can then write 
\begin{equation}
f_{-}(t,\mathbf{x};\mathbf{u})=f_{+}(\tau ,\mathbf{\xi };\mathbf{u})\chi
_{1}(t,\mathbf{u})+f_{0}\left( \mathbf{u}\right) \chi _{0}(t,\mathbf{u}).
\label{equation:f- with precollision}
\end{equation}%
Plugging (\ref{equation:f- with precollision}) into the boundary condition,
we have 
\begin{eqnarray*}
f_{+}(t,\mathbf{x};\mathbf{v}) &=&\alpha f_{-}(t,\mathbf{x}%
;2W(t)-v_{x},v_{\bot })+(1-\alpha )\int_{u_{x}\leqslant W\left( t\right)
}K\left( \mathbf{v}-\mathbf{i}W\left( t\right) ;\mathbf{u}-\mathbf{i}W\left(
t\right) \right) f_{-}(t,\mathbf{x};\mathbf{u})d\mathbf{u} \\
&=&\alpha \left\{ f_{+}(\tau ,\mathbf{\xi };2W(t)-v_{x},v_{\bot })\chi
_{1}(t,2W(t)-v_{x},v_{\bot })+f_{0}\left( 2W(t)-v_{x},v_{\bot }\right) \chi
_{0}(t,2W(t)-v_{x},v_{\bot })\right\} \\
&&+(1-\alpha )\int_{u_{x}\leqslant W\left( t\right) }K\left( \mathbf{v}-%
\mathbf{i}W\left( t\right) ;\mathbf{u}-\mathbf{i}W\left( t\right) \right)
\left\{ f_{+}(\tau ,\mathbf{\xi };\mathbf{u})\chi _{1}(t,\mathbf{u}%
)+f_{0}\left( \mathbf{u}\right) \chi _{0}(t,\mathbf{u})\right\} d\mathbf{u.}
\end{eqnarray*}%
Since the momentum is only transferred horizontally, we can rewrite this
formula as 
\begin{eqnarray*}
&&a_{+}(t,\mathbf{x;}v_{x})b(v_{\bot }) \\
&=&\alpha \left\{ a_{+}(\tau ,\mathbf{\xi ;}2W(t)-v_{x})b(v_{\bot })\chi
_{1}(t,2W(t)-v_{x},v_{\bot })+a_{0}(2W(t)-v_{x})b(v_{\bot })\chi
_{0}(t,2W(t)-v_{x},v_{\bot })\right\} \\
&&+(1-\alpha )b(v_{\bot })\int_{u_{x}\leqslant W\left( t\right)
}k(v_{x}-W(t),u_{x}-W(t))\left\{ a_{+}(\tau ,\mathbf{\xi ;}u_{x})b(u_{\bot
})\chi _{1}(t,\mathbf{u})+a_{0}(u_{x})b(u_{\bot })\chi _{0}(t,\mathbf{u}%
)\right\} d\mathbf{u}.
\end{eqnarray*}
We do not divide by $b(v_{\bot })$ on both sides because it could possibly
vanish. Now 
\begin{eqnarray*}
a_{+}(t,\mathbf{x;}v_{x})b(v_{\bot }) &\leqslant &\alpha \left[ a_{+}(\tau ,%
\mathbf{\xi ;}2W(t)-v_{x})b(v_{\bot })+a_{0}(2W(t)-v_{x})b(v_{\bot })\right]
\\
&&+b(v_{\bot })\left\{ \sup_{\tau ,u_{x},\mathbf{\xi \in \partial \Omega
(\tau )}}a_{+}(\tau ,\mathbf{\xi ;}u_{x})\right\} \int_{\left\langle
W\right\rangle _{t}}^{W(t)}k(v_{x}-W(t),u_{x}-W(t))du_{x} \\
&&+b(v_{\bot })\int_{\mathbb{-\infty }}^{V_{\infty
}}k(v_{x}-W(t),u_{x}-W(t))a_{0}(u_{x})du_{x}
\end{eqnarray*}
by \eqref{velocitybound}. Since $W\left( t\right) -\left\langle
W\right\rangle _{t}\leqslant \gamma =V_{\infty }-V_{0}$, we have 
\begin{eqnarray*}
a_{+}(t,\mathbf{x;}v_{x})b(v_{\bot }) &\leqslant &\alpha \left[ a_{+}(\tau ,%
\mathbf{\xi ;}2W(t)-v_{x})b(v_{\bot })+a_{0}(2W(t)-v_{x})b(v_{\bot })\right]
\\
&&+b(v_{\bot })C\gamma \left\{ \sup_{\tau ,u_{x},\mathbf{\xi \in \partial
\Omega (\tau )}}a_{+}(\tau ,\mathbf{\xi ;}u_{x})\right\} +b(v_{\bot })C
\end{eqnarray*}
by A2 and A4. Hence, taking the supremum over all times $t$, positions $%
\mathbf{x}\in\partial\Omega(t)$ and velocities $v_x\in\mathbb{R}$, we have 
\begin{eqnarray*}
b(v_{\bot })\left\{ \sup_{\tau ,u_{x},\mathbf{\xi \in \partial \Omega (\tau )%
}}a_{+}(\tau ,\mathbf{\xi ;}u_{x})\right\} &\leqslant &\alpha b(v_{\bot
})\left\{ \sup_{\tau ,u_{x},\mathbf{\xi \in \partial \Omega (\tau )}%
}a_{+}(\tau ,\mathbf{\xi ;}u_{x})\right\} +Cb(v_{\bot }) \\
&&+b(v_{\bot })C\gamma \left\{ \sup_{\tau ,u_{x},\mathbf{\xi \in \partial
\Omega (\tau )}}a_{+}(\tau ,\mathbf{\xi ;}u_{x})\right\} +Cb(v_{\bot }).
\end{eqnarray*}%
That is, 
\begin{equation}
b(v_{\bot })\left\{ \sup_{\tau ,u_{x},\mathbf{\xi \in \partial \Omega (\tau )%
}}a_{+}(\tau ,\mathbf{\xi ;}u_{x})\right\} \leqslant \frac{Cb(v_{\bot })}{%
1-\alpha -C\gamma }\leqslant Cb(v_{\bot }),\text{ for }\gamma <\frac{%
1-\alpha }{C},  \label{bound:upper bound for f+}
\end{equation}%
which is an upper bound for $f_{+}(t,\mathbf{x};\mathbf{v}).$

In order to get a lower bound of $f_{+}(t,\mathbf{x};\mathbf{v}),$ we use
Assumption A5 to deduce, for $V_{0}\leq v_{x}\leq V_{\infty }$, that 
\begin{eqnarray}
f_{+}(t,\mathbf{x};\mathbf{v}) &=&a_{+}(t,\mathbf{x;}v_{x})b(v_{\bot })
\label{bound:lower bound of IL} \\
&\geqslant &(1-\alpha )b(v_{\bot })\int_{u_{x}\leqslant W\left( t\right)
}k(v_{x}-W(t),u_{x}-W(t))a_{0}(u_{x})b(u_{\bot })\chi _{0}(t,\mathbf{u})d%
\mathbf{u}  \notag \\
&\geqslant &(1-\alpha )b(v_{\bot })\int_{-\infty
}^{V_{0}}k(v_{x}-W(t),u_{x}-W(t))a_{0}(u_{x})du_{x}  \notag \\
&\geqslant &(a_{0}(V_{\infty })+\delta )\ b(v_{\perp })  \notag
\end{eqnarray}
by A5.

We are now ready to establish upper and lower bounds of $r_{W}^{R}\left(
t\right) .$ We begin with the crucial lower bound because it is the main
reason why $r_{W}^{R}+r_{W}^{L}\geqslant 0$. Using the lower bound of $%
f_{+}\left( \tau ,\mathbf{\xi },\mathbf{u}\right) $, we get 
\begin{eqnarray*}
r_{W}^{R}\left( t\right) &=&\int_{\partial \Omega _{R}\left( t\right) }dS_{%
\mathbf{x}}\int_{u_{x}\leqslant W\left( t\right) }d\mathbf{u}\
\ell(u_{x}-W(t))\left\{ f_{-}(t,\mathbf{x},\mathbf{u})-f_{0}(\mathbf{u}%
)\right\} \\
&=& \int_{\partial \Omega _{R}\left( t\right) }dS_{\mathbf{x}%
}\int_{u_{x}\leqslant W\left( t\right) }d\mathbf{u}\
\ell(u_{x}-W(t))[f_{+}\left( \tau ,\mathbf{\xi },\mathbf{u}\right) \chi
_{1}\left( t,\mathbf{u}\right) +f_{0}(\mathbf{u})\chi _{0}\left( t,\mathbf{u}%
\right) -f_{0}(\mathbf{u})] \\
&=& \int_{\partial \Omega _{R}\left( t\right) }dS_{\mathbf{x}%
}\int_{\left\vert u_{\bot }\right\vert \leqslant \frac{2r}{t-\tau }}du_{\bot
}\int_{\left\langle W\right\rangle _{t}}^{W\left( t\right) }du_{x}\
\ell(u_{x}-W(t))(f_{+}\left( \tau ,\mathbf{\xi },\mathbf{u}\right) -f_{0}(%
\mathbf{u})) \\
&\geqslant & \int_{\partial \Omega _{R}\left( t\right) }dS_{\mathbf{x}
}\int_{\left\vert u_{\bot }\right\vert \leqslant \frac{2r}{t-\tau }%
}du_{\bot} b(u_\perp) \int_{\left\langle W\right\rangle _{t}}^{W\left(
t\right) }du\ \ell(u_{x}-W(t)) \left( a_{0}(V_{\infty })+\delta
-a_{0}(u_{x})\right)
\end{eqnarray*}%
by (\ref{bound:lower bound of IL}).  For small enough $\gamma $, we have by
continuity of $a_{0}$ that%
\begin{equation*}
a_{0}(V_{\infty })+\delta -a_{0}(u_{x})\geqslant \frac{\delta }{2}>0.
\end{equation*}
We then deduce via Assumption A3 and Lemma \ref{Lemma:The class of W}(iv)
that 
\begin{equation*}
r_{W}^{R}\left( t\right) \geqslant C\frac{\delta }{2}\int_{\left\vert
u_{\bot }\right\vert \leqslant \frac{2r}{t-\tau }} b(u_\perp)
du_{\bot}\int_{\left\langle W\right\rangle _{t}}^{W\left( t\right)
}du_{x}\ell(u_{x}-W(t))\geqslant C\frac{\delta }{2}\frac{\left(
W(t)-\left\langle W\right\rangle _{t}\right) ^{p+1}}{\left( 1+t\right) ^{d-1}%
}\geq 0.
\end{equation*}%
Thus 
\begin{equation*}
r_{W}^{R}\left( t\right) \geqslant \frac{C\delta }{\left( 1+t\right) ^{d-1}}%
\gamma ^{p+1}\left( \langle h\rangle _{t}-g(t)\right) ^{p+1}
\end{equation*}%
for $t\geqslant t_{0}$ and small enough $\gamma $. This is the desired the
lower bound of $r_{W}^{R}$.

We now determine an upper bound for $r_{W}^{R}.$ Using the upper bound %
\eqref{bound:upper bound for f+} of $f_{+}\left( \tau ,\mathbf{\xi },\mathbf{%
u}\right) $ and Lemma \ref{Lemma:The class of W}(iv), we have 
\begin{eqnarray*}
r_{W}^{R}\left( t\right) &=&\int_{\partial \Omega _{R}\left( t\right) }dS_{%
\mathbf{x}}\int_{u_{x}\leqslant W\left( t\right) }d\mathbf{u}\
\ell(u_{x}-W(t))\left\{ f_{+}\left( \tau ,\mathbf{\xi },\mathbf{u}\right)
-f_{0}(\mathbf{u})\right\} \\
&=&\int_{\partial \Omega _{R}\left( t\right) }dS_{\mathbf{x}%
}\int_{\left\langle W\right\rangle _{t}}^{W\left( t\right)
}du_{x}\int_{\left\vert u_{\bot }\right\vert \leqslant \frac{2r}{t-\tau }%
}du_{\bot }\ell(u_{x}-W(t))(f_{+}\left( \tau ,\mathbf{\xi },\mathbf{u}%
\right) -f_{0}(\mathbf{u})) \\
&\leqslant &C\int_{\left\langle W\right\rangle _{t}}^{W\left( t\right)
}du_{x}\int_{\left\vert u_{\bot }\right\vert \leqslant \frac{2r}{t-\tau }%
}du_{\bot }\ell(u_{x}-W(t))\ b(u_{\bot }).
\end{eqnarray*}%
We split the integral according to whether $\tau <t/2$ or $\tau \geq t/2$.
Thus 
\begin{eqnarray*}
r_{W}^{R}\left( t\right) &\leq &\frac{C\left( W(t)-\left\langle
W\right\rangle _{t}\right) ^{p+1}}{\left( 1+t\right) ^{d-1}}%
+C\int_{\left\langle W\right\rangle _{t}}^{W\left( t\right)
}du_{x}\int_{\left\vert u_{\bot }\right\vert \leqslant \frac{2r}{t-\tau }%
\text{,}\tau \geqslant \frac{t}{2}}du_{\bot }\ell(u_{x}-W(t))b(u_{\bot }) \\
&\leqslant &\frac{C\left( \frac{\gamma }{t}\int_{0}^{t}g(\tau )d\tau \right)
^{p+1}}{\left( 1+t\right) ^{d-1}}+C\int_{\left\langle W\right\rangle
_{t}}^{W\left( t\right) }du_{x}\int_{\left\vert u_{\bot }\right\vert
\leqslant \frac{2r}{t-\tau }\text{,}\tau \geqslant \frac{t}{2}}du_{\bot
}\ell(u_{x}-W(t))\ b(u_{\bot })
\end{eqnarray*}%
by Assumption A3. For the second term in this estimate, by the precollision
condition (\ref{condition:recollision condition}) we notice that%
\begin{equation*}
u_{x}=W(t)-\frac{1}{t-\tau }\int_{\tau }^{t}\left( W(t)-W(s)\right)
ds\geqslant W(t)-\frac{1}{t-\tau }\int_{\tau }^{t}\left( V_{\infty
}-W(s)\right) ds\geqslant W(t)-\gamma \langle g\rangle _{\tau ,t}.
\end{equation*}%
By Assumption A3 again, this inequality allows us to estimate the second
term as 
\begin{eqnarray*}
&&\int_{W(t)-\langle g\rangle _{\tau ,t}}^{W\left( t\right)
}du_{x}\int_{\left\vert u_{\bot }\right\vert \leqslant \frac{2r}{t-\tau }%
\text{,}\tau \geqslant \frac{t}{2}}du_{\bot }\ell(u_{x}-W(t))b(u_{\bot }) \\
&\leqslant &C\sup_{\frac{t}{2}\leqslant \tau \leqslant t}\int_{-\gamma
\langle g\rangle _{\tau ,t}}^{0}du_{x}\left\vert u_{x}\right\vert
^{p}\leqslant C\gamma ^{p+1}\sup_{\frac{t}{2}\leqslant \tau \leqslant
t}\langle g\rangle _{\tau ,t}^{p+1}\ .
\end{eqnarray*}%
Hence%
\begin{equation*}
r_{W}^{R}\left( t\right) \leqslant \frac{C\gamma ^{p+1}\langle g\rangle
_{t}^{p+1}}{\left( 1+t\right) ^{d-1}}+C\gamma ^{p+1}\sup_{\frac{t}{2}%
\leqslant \tau \leqslant t}\langle g\rangle _{\tau ,t}^{p+1}.
\end{equation*}
\end{proof}


\subsection{The Left Side}

We now proceed to bound the force $\left\vert r_{W}^{L}\right\vert $ on the
left side of the cylinder.

\begin{lemma}
\label{Lemma:UpperBoundOfR-}Under the same assumptions as in Lemma \ref%
{Lemma:Upper and lower bound of R+}, for $\gamma $ small enough, we have 
\begin{equation*}
\left\vert r_{W}^{L}\left( t\right) \right\vert \leqslant C\gamma ^{p+1}\chi
\left\{ t\geqslant t_{0}\right\} \left( \frac{g^{p+1}(t)}{t^{d-1}}+\sup_{%
\frac{t}{2}\leqslant \tau \leqslant t}\langle g\rangle _{\tau
,t}^{p+1}\right) .
\end{equation*}
\end{lemma}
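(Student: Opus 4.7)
The plan is to mirror the upper bound analysis for $r_W^R(t)$ in Lemma \ref{Lemma:Upper and lower bound of R+}, adapted to the left face, and crucially exploiting the severe restriction that the near-monotonicity of $W$ places on precollisions there.

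\textbf{Kinematic setup.} An incident particle hitting $\partial\Omega_L(t)$ satisfies $u_x \geq W(t) > 0$. Any precollision must itself be a left-face collision, because emissions from the right face carry $u_x \geq W(\tau)$ and travel rightward faster than the body, never overtaking it in time to reach the left face at a later instant. A genuine left-face precollision at time $\tau$ requires the emission condition $u_x \leq W(\tau)$, the trajectory-matching identity $u_x = \langle W \rangle_{\tau, t}$, and the arrival condition $u_x \geq W(t)$. Together these force
\begin{equation*}
W(t) \leq \langle W \rangle_{\tau, t} \leq W(\tau),
\end{equation*}
so $W$ cannot be strictly increasing on $[\tau, t]$. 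Since every $W \in \mathcal{W}$ is strictly increasing on $[0, t_0]$ by Definition \ref{def:W}(ii), there is no left-face precollision for $t \leq t_0$, hence $r_W^L(t) = 0$ on that range; this accounts for the factor $\chi\{t \geq t_0\}$.

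\textbf{Decomposition and pointwise bounds.} For $t > t_0$, write $f_-(t, x, u) - f_0(u) = [f_+(\tau, \mathbf{\xi}, u) - f_0(u)]\, \chi_1(t, u)$ as in Lemma \ref{Lemma:Upper and lower bound of R+}. Apply the uniform bound $|f_+(\tau, \mathbf{\xi}, u)| \leq C b(u_\perp)$ derived there, together with $f_0 \leq C b(u_\perp)$, and use Assumption A3 to get $\ell(u_x - W(t)) \leq C |u_x - W(t)|^p$ (valid because $|u_x - W(t)| \leq \gamma$). This yields
\begin{equation*}
|r_W^L(t)| \leq C \int_{\partial\Omega_L(t)} dS_x \int du_x \int_{|u_\perp| \leq 2r/(t-\tau)} du_\perp\, |u_x - W(t)|^p\, b(u_\perp),
\end{equation*}
with $u_x$ restricted to the precollision set $\{u_x = \langle W \rangle_{s, t}\ \text{for some}\ s<t,\ u_x \geq W(t)\}$.

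\textbf{Time-split estimate.} Following Lemma \ref{Lemma:Upper and lower bound of R+}, split by $\tau < t/2$ versus $\tau \geq t/2$. In the first regime, $t-\tau > t/2$, so $\int_{|u_\perp| \leq 2r/(t-\tau)} b(u_\perp) du_\perp \leq C/t^{d-1}$ via $\|b\|_\infty$ and the volume of the transverse ball, while the full width of the $u_x$ range is at most
\begin{equation*}
\langle W \rangle_{\tau, t} - W(t) \leq V_\infty - W(t) \leq \gamma g(t),
\end{equation*}
so integrating $|u_x-W(t)|^p$ produces $C(\gamma g(t))^{p+1}$, giving the summand $C\gamma^{p+1} g(t)^{p+1}/t^{d-1}$. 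In the second regime, bound the transverse integral by $\int b \leq 1$ and bound the $u_x$ width by the same computation as on the right face, yielding $C\gamma^{p+1} \sup_{t/2 \leq \tau \leq t} \langle g \rangle_{\tau, t}^{p+1}$ after taking the supremum over allowable $\tau$. Adding the two pieces gives the claimed inequality.

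\textbf{Main obstacle.} The non-routine step is the kinematic observation: that precollisions on $\partial\Omega_L$ can only arise from prior emissions off the same face, and that strict monotonicity of $W$ on $[0, t_0]$ then extinguishes them entirely. Once that structural fact is in place, the remaining analysis is a direct mirror of the right-face upper bound, with the uniform bound on $f_+$ from Lemma \ref{Lemma:Upper and lower bound of R+} serving as the critical input.
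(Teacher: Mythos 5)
Your proposal follows essentially the same route as the paper: the monotonicity of $W$ on $[0,t_0]$ kills all left-face precollisions for $t\le t_0$ (the paper argues $u_x\le W(\tau)$ from emission and $u_x\ge W(t)>W(\tau)$ from incidence, a contradiction — your version routes this through $\langle W\rangle_{\tau,t}$, which is equivalent), then one plugs in the first-precollision decomposition, uses a uniform $L^\infty$ bound on $f_+$, and splits the time integral at $\tau=t/2$ to produce the two terms. Two small remarks. First, the paper does not simply cite the $f_+$ bound from Lemma~\ref{Lemma:Upper and lower bound of R+}: it re-derives it using the \emph{left}-face boundary condition (integration over $u_x\ge W(t)$, velocity range pinched into $[W(t),V_\infty]$) together with A2 and A4, so that the contraction $\sup a_+\le(\alpha+C\gamma)\sup a_+ +C$ closes on the left face as well; your step glosses over this, though the same argument does go through verbatim. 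Second, the parenthetical claim that right-face emissions ``travel rightward faster than the body, never overtaking it'' is not literally correct (the body accelerates toward $V_\infty$ and can overtake such a particle, producing a right-face recollision); that aside, the assertion is also unnecessary — the paper's $\chi_1$ is already defined only for same-face precollisions, so no cross-face case ever enters. Neither point changes the outcome, and your time-split estimate and use of A3 match the paper's.
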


This estimate is different from the upper bound of $r_{W}^{R}\left( t\right) 
$ because it is the second term that is the dominant one.

\begin{proof}
We first notice that $r_{W}^{L}\left( t\right) =0$ for all $t\leqslant t_{0}$
because $W$ is increasing. Indeed, suppose that on the left there is a
precollision at time $\tau $ and a later collision at time $t\le t_0$. If
the velocity of the particle in the time period $\left( \tau ,t\right) $ is $%
\mathbf{u},$ then $u_{x}\leqslant W\left( \tau \right) $ and $u_{x}\geqslant
W(t)>W(\tau )$ which is a contradiction.

Now by the precollision condition, we have%
\begin{equation*}
u_{x}=\left\langle W\right\rangle _{\tau ,t}=\frac{1}{t-\tau }\int_{\tau
}^{t}W\left( s\right) ds\leqslant \frac{1}{t-\tau }\int_{\tau }^{t}\left(
V_{\infty }-\gamma h(s)\right) ds\leqslant V_{\infty }.
\end{equation*}%
Recalling the boundary condition (\ref{boundary condition: Left}) on the
left side of the cylinder, we have 
\begin{equation*}
f_{+}(t,\mathbf{x};\mathbf{v})=\alpha f_{-}(t,\mathbf{x},2W(t)-v_{x},v_{\bot
})+(1-\alpha )\int_{u_{x}\geqslant W\left( t\right) }K\left( \mathbf{v}-%
\mathbf{i}W\left( t\right) ,\mathbf{u}-\mathbf{i}W\left( t\right) \right)
f_{-}(t,\mathbf{x};\mathbf{u})d\mathbf{u}.
\end{equation*}%
Again, plugging in the precollision condition (\ref{equation:f- with
precollision}), namely $f_{-}(t,\mathbf{x};\mathbf{u})=f_{+}(\tau ,\mathbf{%
\xi };\mathbf{u})\chi _{1}\left( t,\mathbf{u}\right) +f_{0}(\mathbf{u})\chi
_{0}(t,\mathbf{u}),$ we then have%
\begin{eqnarray*}
f_{+}(t,\mathbf{x};\mathbf{v}) &=&\alpha \left\{ f_{+}(\tau ,\mathbf{\xi }%
;2W(t)-v_{x},v_{\bot })\chi _{1}\left( t,2W(t)-v_{x},v_{\bot }\right)
+f_{0}(2W(t)-v_{x},v_{\bot })\chi _{0}(t,2W(t)-v_{x},v_{\bot })\right\} \\
&&+(1-\alpha )\int_{u_{x}\geqslant W\left( t\right) }K\left( \mathbf{v}-%
\mathbf{i}W\left( t\right) ,\mathbf{u}-\mathbf{i}W\left( t\right) \right)
\left\{ f_{+}(\tau ,\mathbf{\xi };\mathbf{u})\chi _{1}\left( t,\mathbf{u}%
\right) +f_{0}(\mathbf{u})\chi _{0}(t,\mathbf{u})\right\} d\mathbf{u.}
\end{eqnarray*}%
Together with $u_{x}\leqslant V_{\infty }$, we have 
\begin{eqnarray*}
&&f_{+}(t,\mathbf{x};\mathbf{v})=a_{+}(t\mathbf{,x;}v_{x})b(v_{\bot }) \\
&=&\alpha \left\{ a_{+}(\tau ,\mathbf{\xi };2W(t)-v_{x})\chi _{1}\left(
t,2W(t)-v_{x},v_{\bot }\right) +a_{0}(2W(t)-v_{x})\chi
_{0}(t,2W(t)-v_{x},v_{\bot })\right\} b(v_{\bot }) \\
&&+(1-\alpha )b(v_{\bot })\int_{u_{x}\geqslant W\left( t\right) }k\left(
v_{x}-W\left( t\right) ,u_{x}-W\left( t\right) \right) \left\{ a_{+}(\tau ,%
\mathbf{\xi };u_{x})b(u_{\bot })\chi _{1}\left( t,\mathbf{u}\right)
+a_{0}(u_{x})b(u_{\bot })\chi _{0}(t,\mathbf{u})\right\} d\mathbf{u} \\
&\leqslant &\alpha \left\{ a_{+}(\tau ,\mathbf{\xi };2W(t)-v_{x})\chi
_{1}\left( t,2W(t)-v_{x},v_{\bot }\right) +a_{0}(2W(t)-v_{x})\chi
_{0}(t,2W(t)-v_{x},v_{\bot })\right\} b(v_{\bot }) \\
&&+(1-\alpha )b(v_{\bot })\int_{W\left( t\right) }^{V_{\infty }}k\left(
v_{x}-W\left( t\right) ,u_{x}-W\left( t\right) \right) a_{+}(\tau ,\mathbf{%
\xi };u_{x})du_{x} \\
&&+(1-\alpha )b(v_{\bot })\int_{-\infty }^{V_{\infty }}k\left( v_{x}-W\left(
t\right) ,u_{x}-W\left( t\right) \right) a_{0}(u_{x})du_{x} \\
&\leqslant &\alpha \left\{ a_{+}(\tau ,\mathbf{\xi };2W(t)-v_{x})\chi
_{1}\left( t,2W(t)-v_{x},v_{\bot }\right) +a_{0}(2W(t)-v_{x})\chi
_{0}(t,2W(t)-v_{x},v_{\bot })\right\} b(v_{\bot }) \\
&&+Cb(v_{\bot })\gamma \left\{ \sup_{\tau ,u_{x},\mathbf{\xi \in \partial
\Omega (\tau )}}a_{+}(\tau ,\mathbf{\xi ;}u_{x})\right\} +C
\end{eqnarray*}%
for $v_{x}\in \lbrack V_{0},V_{\infty }]$, where in the last line we used
Assumptions A2 and A5. Hence, taking supremums as in the earlier estimate (%
\ref{bound:upper bound for f+}), we have 
\begin{equation*}
b(v_{\bot })\left\{ \sup_{\tau ,u_{x},\mathbf{\xi \in \partial \Omega (\tau )%
}}a_{+}(\tau ,\mathbf{\xi ;}u_{x})\right\} \leqslant b(v_{\bot })\{\alpha
+C\gamma \}\left\{ \sup_{\tau ,u_{x},\mathbf{\xi \in \partial \Omega (\tau )}%
}a_{+}(\tau ,\mathbf{\xi ;}u_{x})\right\} +Cb(v_{\bot }).
\end{equation*}%
Since $\alpha <1$ and $\gamma $ is small, we deduce that 
\begin{equation*}
b(v_{\bot })\left\{ \sup_{\tau ,u_{x},\mathbf{\xi \in \partial \Omega (\tau )%
}}a_{+}(\tau ,\mathbf{\xi ;}u_{x})\right\} \leqslant Cb(v_{\bot }).
\end{equation*}%
Using this upper bound of $f_{+}(\tau ,\mathbf{\xi },u)$, we get 
\begin{eqnarray*}
\left\vert r_{W}^{L}\left( t\right) \right\vert &=&\left\vert \int_{\partial
\Omega _{L}(t)}dS_{x}\int_{u\geqslant W\left( t\right) }d\mathbf{u}\
\ell(u_{x}-W(t))\left\{ f_{-}(t,\mathbf{x};\mathbf{u})-f_{0}(\mathbf{u}%
)\right\} \right\vert \\
&\leqslant &\int_{\partial \Omega _{L}(t)}dS_{x}\int_{u\geqslant W\left(
t\right) }d\mathbf{u}\ \ell(u_{x}-W(t))\left\vert f_{+}(\tau ,\mathbf{\xi };%
\mathbf{u})\chi _{1}\left( t,\mathbf{u}\right) +f_{0}(\mathbf{u})\chi _{0}(t,%
\mathbf{u})-f_{0}(\mathbf{u})\right\vert \\
&\leqslant &C\int_{\left\vert u_{\bot }\right\vert \leqslant \frac{2r}{%
t-\tau }}du_{\bot }\int_{W(t)}^{V_{\infty }}du_{x}\
\ell(u_{x}-W(t))b(u_{\bot })
\end{eqnarray*}%
As before, we split the integral at $\tau =t/2$. So, as in the proof of the
previous lemma, we obtain 
\begin{eqnarray*}
\left\vert r_{W}^{L}\left( t\right) \right\vert &\leqslant &\frac{C\left(
V_{\infty }-W(t)\right) ^{p+1}}{\left( 1+t\right) ^{d-1}}\chi {\{t\geq
t_{0}\}}+C\sup_{\frac{t}{2}\leqslant \tau \leqslant t}\left( \frac{\gamma }{%
t-\tau }\int_{\tau }^{t}g(s)ds\right) ^{p+1} \\
&\leqslant &\frac{C\gamma ^{p+1}g^{p+1}(t)}{\left( 1+t\right) ^{d-1}}\chi {%
\{t\geq t_{0}\}}+C\gamma ^{p+1}\sup_{\frac{t}{2}\leqslant \tau \leqslant
t}\langle g\rangle _{\tau ,t}^{p+1}
\end{eqnarray*}%
for small $\gamma $, by Assumption A3.
\end{proof}


\subsection{Force Due to Precollisions}

\begin{lemma}
\label{Corollary:UpperAndLowerBoundOfR}Define 
\begin{equation*}
R_W(t)=r_{W}^{R}(t)+r_{W}^{L}(t).
\end{equation*}%
Assume that $g$ is non-increasing and that there is a power $M>\frac{p+d}{p+1%
}$ and a constant $G$ such that 
\begin{equation}
g(t)\leq G(1+t)^{-M}.  \label{Gbound}
\end{equation}%
Then 
\begin{equation}
R_W(t)\leq C\gamma ^{p+1}\frac{G^{p+1}}{(1+t)^{d+p}}.  \label{upperR}
\end{equation}
Furthermore, for $t\geq \max ((2G/H)^{1/\left( M-1\right) },4^q\left(
2^{M}G/H\right) ^{q(p+1)}$ , we have 
\begin{equation}
R_W(t)\geq C\gamma ^{p+1}\frac{\chi \{t\geq t_{0}\}}{t^{d+p}}H^{p+1},
\label{lowerR}
\end{equation}%
where $H=\int_{0}^{1}h(s)ds$ and $q = \{M(p+1)-(p+d)\}^{-1}$. Here and
below, the constant $C$ may change from line to line but is always
independent of $t,\gamma ,H,G,g(0)$.
\end{lemma}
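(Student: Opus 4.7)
The plan is to combine the estimates from Lemmas \ref{Lemma:Upper and lower bound of R+} and \ref{Lemma:UpperBoundOfR-} and convert all quantities involving $g$ and $h$ into the target decay rate $(1+t)^{-(d+p)}$ using the hypothesis $g(t)\leqslant G(1+t)^{-M}$. Two auxiliary estimates do the work. First, since $M>(p+d)/(p+1)>1$,
$$\langle g\rangle_t=\frac{1}{t}\int_0^t g(s)\,ds\leqslant \frac{G}{(M-1)t}.$$
Second, because $g$ is non-increasing, for $\tau\in[t/2,t]$,
$$\langle g\rangle_{\tau,t}\leqslant g(\tau)\leqslant g(t/2)\leqslant 2^M G\,(1+t)^{-M}.$$

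For the upper bound \eqref{upperR}, I would write $R_W(t)\leqslant r_W^R(t)+|r_W^L(t)|$, giving three kinds of terms. The term $\langle g\rangle_t^{p+1}/(1+t)^{d-1}$ is $\leqslant CG^{p+1}/(1+t)^{d+p}$ by the first auxiliary estimate. The term $g^{p+1}(t)/(1+t)^{d-1}$ is $\leqslant G^{p+1}(1+t)^{-M(p+1)-(d-1)}$, which is even smaller than needed since $M\geqslant 1$. The term $\sup_{t/2\leqslant\tau\leqslant t}\langle g\rangle_{\tau,t}^{p+1}$ is $\leqslant (2^M G)^{p+1}(1+t)^{-M(p+1)}\leqslant C G^{p+1}(1+t)^{-(d+p)}$ by the hypothesis $M(p+1)\geqslant d+p$. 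Summing yields \eqref{upperR}.

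For the lower bound \eqref{lowerR}, I would use $R_W(t)\geqslant r_W^R(t)-|r_W^L(t)|$ and start from the lower bound in Lemma \ref{Lemma:Upper and lower bound of R+}, whose main factor is $(\langle h\rangle_t-g(t))^{p+1}/t^{d-1}$. Since $h\geqslant 0$ gives $\langle h\rangle_t\geqslant H/t$, the first threshold $t\geqslant(2G/H)^{1/(M-1)}$ forces $g(t)\leqslant G/t^M\leqslant H/(2t)$, and hence $\langle h\rangle_t-g(t)\geqslant H/(2t)$, which produces
$$r_W^R(t)\geqslant \frac{c\gamma^{p+1}\chi\{t\geqslant t_0\}H^{p+1}}{2^{p+1}\,t^{d+p}}.$$
The error contribution $|r_W^L(t)|$ is controlled by the same $\sup$ bound above: it is $\leqslant C\gamma^{p+1}(2^M G)^{p+1}(1+t)^{-M(p+1)}$ (the $g^{p+1}(t)/t^{d-1}$ piece is smaller still). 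The second threshold is chosen so that this error is at most half of the main term, that is,
$$(2^M G)^{p+1}(1+t)^{-M(p+1)}\leqslant \tfrac{1}{2}\cdot H^{p+1}/(2^{p+1}t^{d+p}),$$
which, after raising to the power $q=1/(M(p+1)-(d+p))$ and solving, is exactly $t\geqslant 4^q(2^M G/H)^{q(p+1)}$ up to harmless constants. Subtracting gives \eqref{lowerR}.

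The main obstacle is essentially bookkeeping: keeping precise track of the constants $2^{p+1}$ and $2^M$ so that the two explicit thresholds on $t$ stated in the lemma are reached exactly, and using the strict inequality $M(p+1)>d+p$ (so that $q$ is positive and finite) to trade factors of $G/H$ against factors of $t$. No new analytic idea is required beyond the two preceding lemmas.
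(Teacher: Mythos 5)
Your proof is correct and follows essentially the same route as the paper: combine Lemmas \ref{Lemma:Upper and lower bound of R+} and \ref{Lemma:UpperBoundOfR-}, use monotonicity of $g$ to control $\langle g\rangle_{\tau,t}$ by $g(t/2)$, convert the $g$- and $h$-terms via \eqref{Gbound} and $\langle h\rangle_t\geq H/t$, and choose the two $t$-thresholds so that (first) $g(t)\leq H/(2t)$ and (second) the error term is at most half the main term. One small caveat: your first auxiliary estimate $\langle g\rangle_t\leq G/((M-1)t)$ blows up as $t\to 0$, so it alone does not give $\langle g\rangle_t^{p+1}/(1+t)^{d-1}\leq CG^{p+1}/(1+t)^{d+p}$ uniformly; you should pair it with the trivial $\langle g\rangle_t\leq g(0)\leq G$ to get $\langle g\rangle_t\leq CG/(1+t)$ for all $t\geq 0$. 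Beyond that, the factors of $2^p$ you flag as ``harmless constants'' are indeed harmless, since the $C$'s in the two preceding lemmas are already generic.
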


\begin{proof}
By the monotonicity, $\langle g\rangle _{t/2,t}\leq g(t/2)$. So by Lemmas %
\ref{Lemma:Upper and lower bound of R+} and \ref{Lemma:UpperBoundOfR-}, we
have 
\begin{equation*}
R_W(t)\leq C\gamma ^{p+1}\left\{ \frac{1}{(1+t)^{d-1}}\langle g\rangle
_{t}^{p+1}+g^{p+1}(\frac{t}{2})\right\} .
\end{equation*}%
Thus by (\ref{Gbound}) we have 
\begin{equation*}
R_W(t)\leq C\gamma ^{p+1}\frac{G^{p+1}}{(1+t)^{d+p}}.
\end{equation*}%
which is the desired upper bound. Next, by Lemmas \ref{Lemma:Upper and lower
bound of R+} and \ref{Lemma:UpperBoundOfR-} we have the lower bound 
\begin{equation*}
R_W(t)\geq \chi \{t\geq t_{0}\}C\gamma ^{p+1}\left\{ \frac{[\langle h\rangle
_{t}-g(t)]^{p+1}}{t^{d-1}}-g^{p+1}(t/2)\right\}
\end{equation*}%
\begin{equation*}
\geq \chi \{t\geq t_{0}\}C\gamma ^{p+1}\left\{ \left[ \frac{H}{t}-\frac{G}{%
t^{{M}}}\right] ^{p+1}{t^{1-d}}-\frac{G^{p+1}}{(t/2)^{M(p+1)}}\right\} \geq
\chi \{t\geq t_{0}\}C\gamma ^{p+1}\left\{ \frac{H^{p+1}}{2t^{p+d}}%
-G^{p+1}\left( \frac{2}{t}\right) ^{M(p+1)}\right\}
\end{equation*}%
provided $t>(2G/H)^{1/\left( M-1\right) }$. Therefore 
\begin{equation*}
R_W(t)\geq \chi \{t\geq t_{0}\}C\gamma ^{p+1}\frac{H^{p+1}}{4t^{d+p}}
\end{equation*}%
provided also that $t> 4^q\left( 2^{M}G/H\right) ^{q(p+1)}$. 
\end{proof}


\section{Motion of the Body}

Combining Lemmas \ref{Lemma:Upper and lower bound of R+}, \ref%
{Lemma:UpperBoundOfR-} and \ref{Corollary:UpperAndLowerBoundOfR}, we can now
determine upper and lower bounds of $V_\infty-V_W(t)$.

\begin{lemma}
\label{Prop:DeducingConditionsOnhAndg} Define the quotient%
\begin{equation*}
Q(t)=\frac{F_{0}(V_{\infty })-F_{0}(W(t))}{V_{\infty }-W(t)},
\end{equation*}%
the two positive constants 
\begin{equation*}
B_{0}=\min_{V\in \left[ V_{0},V_{\infty }\right] }F_{0}^{\prime }(V),\quad
B_{\infty }=\max_{V\in \left[ V_{0},V_{\infty }\right] }F_{0}^{\prime }(V)
\end{equation*}%
and the cutoff time 
\begin{equation*}
t_{0}=\frac{1}{2B_{\infty }}\log \frac{B_{0}}{\gamma ^{p}}.
\end{equation*}%
Assuming \eqref{Gbound} and that $\gamma $ is small enough, we then have the
following conclusions.

\begin{description}
\item[(i)] As a function of $t,$ $V_{W}(t)$ is differentiable with bounded
derivatives and it is increasing over the interval $[0,t_{0}]$.

\item[(ii)] For $t\geqslant 0,$ we have the upper bound 
\begin{equation*}
V_{\infty }-V_{W}(t)\leqslant \gamma e^{-B_{0}t}+C\gamma ^{p+1}
G^{p+1}(1+t)^{-d-p} .
\end{equation*}

\item[(iii)] For $t\geqslant 0,$ we have the lower bound 
\begin{equation*}
V_{\infty }-V_{W}(t)\geqslant \gamma e^{-B_{\infty }t}+CH^{p+1}\gamma
^{p+1}t^{-d-p}\chi _{(2t_{0},\infty )}\left( t\right) .
\end{equation*}
\end{description}
\end{lemma}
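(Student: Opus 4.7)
The starting point is the iteration equation~\eqref{iteration equation}. Using $E=F_0(V_\infty)$ and the definition of $Q(t)$, it rewrites as
\begin{equation*}
V_W'(t) = Q(t)\,(V_\infty - W(t)) - R_W(t), \qquad V_W(0)=V_0.
\end{equation*}
Putting $\Phi(t)=V_\infty-V_W(t)$ and $\Psi(t)=V_\infty-W(t)$, this becomes the scalar ODE
\begin{equation*}
\Phi'(t) = -Q(t)\,\Psi(t) + R_W(t), \qquad \Phi(0)=\gamma.
\end{equation*}

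Part (i) follows directly: the right-hand side is uniformly bounded by Lemma~\ref{Corollary:UpperAndLowerBoundOfR} together with the a priori bound $|\Psi|\leqslant \gamma g$, so $V_W$ is Lipschitz with a bounded derivative. For strict monotonicity on $[0,t_0]$, I would estimate $V_W'(t) \geqslant B_0\gamma h(t) - C\gamma^{p+1}G^{p+1}(1+t)^{-d-p}$; with the natural lower bound $h(t)\geqslant e^{-B_\infty t}$, the definition $t_0=(2B_\infty)^{-1}\log(B_0/\gamma^p)$ is precisely tuned so that $B_0\gamma e^{-B_\infty t}\geqslant B_0^{1/2}\gamma^{1+p/2}$ on $[0,t_0]$, which dominates $C\gamma^{p+1}G^{p+1}$ once $\gamma$ is small.

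For (ii), the plan is to convert the ODE into a Gronwall-type inequality by writing $\Psi = \Phi + (V_W-W)$:
\begin{equation*}
\Phi'(t) + B_0\Phi(t) = -(Q(t)-B_0)\Psi(t) - B_0(V_W(t)-W(t)) + R_W(t).
\end{equation*}
Since $Q\geqslant B_0$ and $\Psi\geqslant 0$, the first term on the right is $\leqslant 0$, and when the iteration maintains $V_W\geqslant W$ the second term is also $\leqslant 0$. The integrating factor $e^{B_0 t}$ then yields
\begin{equation*}
\Phi(t) \leqslant \gamma e^{-B_0 t} + \int_0^t e^{-B_0(t-s)}\, R_W(s)\, ds.
\end{equation*}
Inserting the upper bound on $R_W$ from Lemma~\ref{Corollary:UpperAndLowerBoundOfR} and applying the convolution estimate
\begin{equation*}
\int_0^t e^{-B_0(t-s)}(1+s)^{-d-p}\, ds \leqslant C(1+t)^{-d-p}
\end{equation*}
(obtained by splitting at $s=t/2$) delivers (ii). The lower bound (iii) is proved by the mirror argument with $Q\leqslant B_\infty$ and the lower bound on $R_W$, whose $\chi_{\{t\geqslant t_0\}}$ cutoff from Lemma~\ref{Corollary:UpperAndLowerBoundOfR} combined with the convolution integration produces the $\chi_{(2t_0,\infty)}$ cutoff in the statement.

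The main obstacle is controlling the sign of $V_W-W$ so that the Gronwall step closes cleanly. Since $V_W(0)=W(0)=V_0$ and $V_W'(0)=Q(0)\gamma>0$ (because $R_W(0)=0$), one expects $V_W\geqslant W$ initially; combining the strict monotonicity from part~(i), the a priori sandwich $\gamma h\leqslant \Psi\leqslant \gamma g$, and a continuity/comparison argument should then propagate $V_W\geqslant W$ for all $t$, with any residual excursion absorbed into the polynomial tail for small $\gamma$. Without the integrating-factor step, direct integration only gives $\Phi(t)\leqslant \gamma e^{-B_0 t}+C\gamma^{p+1}G^{p+1}$, so the convolution estimate is essential for the sharp $(1+t)^{-d-p}$ correction claimed in (ii).
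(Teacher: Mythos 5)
Your decomposition of the ODE, the integrating-factor trick, and the convolution estimate with the split at $s=t/2$ are all exactly the tools the paper uses, and your treatment of part~(i) is essentially the paper's. However, there is a genuine gap coming from a misreading of the iteration equation. You interpret it as $V_W'(t) = E - F_0(W(t)) - R_W(t)$, giving $\Phi'(t) = -Q(t)\Psi(t) + R_W(t)$ with $\Phi = V_\infty - V_W$ and $\Psi = V_\infty - W$. The paper's actual definition of $V_W$, stated explicitly in the proof of Theorem~\ref{ThExistence}, is $\dot V_W = E - F_0(V_W) - R_W(t)$, i.e.\ the fictitious force is evaluated at $V_W$, not at $W$. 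With that reading, $\Phi$ solves the \emph{self-contained} linear ODE $\Phi' = -\widetilde Q(t)\,\Phi + R_W(t)$ with $\widetilde Q(t) = \bigl(F_0(V_\infty)-F_0(V_W(t))\bigr)/\bigl(V_\infty-V_W(t)\bigr) \in [B_0,B_\infty]$ (the $Q$ written with $W$ in the lemma statement is a harmless abuse of notation since both quotients lie between $B_0$ and $B_\infty$). The variation-of-constants formula \eqref{equation:V_inf-V_w} then applies directly and parts~(ii) and~(iii) follow without any comparison between $V_W$ and $W$.

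With your reading this comparison becomes unavoidable, and it does not close. For the upper bound you need $V_W(t)\geqslant W(t)$ so that $-B_0(V_W-W)\leqslant 0$; for the mirror lower bound you need $-B_\infty(V_W-W)\geqslant 0$, i.e.\ $V_W(t)\leqslant W(t)$. These are contradictory unless $V_W\equiv W$, which only holds at the fixed point, not at an arbitrary iterate. Moreover the class $\mathcal W$ only pins both $W$ and $V_W$ into the same strip $\gamma h \leqslant V_\infty - \cdot < \gamma g$, so no ordering between them is available; and the crude bound $|V_W - W|\leqslant 2\gamma g(t)$ produces an error of the same order as the quantity you are trying to estimate, so it cannot be ``absorbed into the polynomial tail.'' The ``continuity/comparison argument'' you gesture at is therefore not a patch but the missing content. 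The fix is simply to use the correct form of the iteration ODE, after which your Gronwall and convolution steps reproduce the paper's proof verbatim.
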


\begin{proof}
(i) By \eqref{iteration equation} we have 
\begin{equation}
\frac{d}{dt}(V_{\infty }-V_{W}(t))=-Q(t)(V_{\infty }-V_{W}(t))+R_W(t),
\label{equation:diff(V_inf-V_w)}
\end{equation}%
so that 
\begin{equation}
V_{\infty }-V_{W}(t)=\gamma e^{-\int_{0}^{t}Q\left( s\right)
ds}+\int_{0}^{t}e^{-\int_{s}^{t}Q\left( \tau \right) d\tau }R_W(s)ds
\label{equation:V_inf-V_w}
\end{equation}%
since $V_{\infty }-V_{W}(0)=V_{\infty }-V_{0}=\gamma $. By Lemma \ref%
{Lemma:The class of W} we have $V_{0}\leqslant V_{\infty }-\gamma g\leqslant
W(t)\leqslant V_{\infty }-\gamma h\leqslant V_{\infty }$. By Lemma \ref%
{fictitious lemma}, 
\begin{equation*}
0<B_{0}=\min_{V\in \left[ V_{0},V_{\infty }\right] }F_{0}^{\prime
}(V)\leqslant Q(t)=\frac{1}{V_{\infty }-W(t)}\int_{W(t)}^{V_{\infty
}}F_{0}^{\prime }(s)ds\leqslant \max_{V\in \left[ V_{0},V_{\infty }\right]
}F_{0}^{\prime }(V)=B_{\infty } <\infty.
\end{equation*}%
Thus (\ref{equation:V_inf-V_w}) together with the positivity of $R_W$
implies the lower bound 
\begin{equation*}
V_{\infty }-V_{W}(t)\geqslant \gamma e^{-\int_{0}^{t}Q}\geqslant \gamma
e^{-B_{\infty }t}.
\end{equation*}%
Now (\ref{equation:diff(V_inf-V_w)}) gives us the upper bound 
\begin{equation*}
\frac{d}{dt}\left( V_{\infty }-V_{W}(t)\right) \leqslant -\min \left[ Q(t)%
\right] \cdot \min \left[ V_{\infty }-V_{W}(t)\right] +R_W(t)\leqslant
-B_{0}\gamma e^{-B_{\infty }t}+R_W(t).
\end{equation*}%
Applying the upper bound on $R_W(t)$, we then turn this estimate into 
\begin{equation*}
\frac{d}{dt}\left( V_{\infty }-V_{W}(t)\right) \leqslant -B_{0}\gamma
e^{-B_{\infty }t}+C\gamma ^{p+1}<0
\end{equation*}%
for $0\leqslant t\leqslant t_{0}$, provided $({B_{0}}/{\gamma }%
^{p})e^{-B_{\infty }t_{0}}>C$. We choose $t_{0}$ as above so that $%
t_{0}\rightarrow +\infty $ as $\gamma \rightarrow 0$, and $({B_{0}}/{\gamma }%
^{p})e^{-B_{\infty }t_{0}}=e^{+B_{\infty }t_{0}}>>0$.

(ii) By (\ref{equation:V_inf-V_w}), we have the upper bound 
\begin{equation*}
V_{\infty }-V_{W}(t)\leqslant \gamma
e^{-B_{0}t}+\int_{0}^{t}e^{-B_{0}(t-s)}R_W(s)\ ds.
\end{equation*}%
We split the integral into two parts. On the one hand, the integral from $%
t/2 $ to $t$ is bounded above by 
\begin{equation*}
\int_{\frac{t}{2}}^{t}e^{-B_{0}(t-s)}R_W(s)ds\leq \int_{\frac{t}{2}%
}^{t}e^{-B_{0}(t-s)}C\gamma ^{p+1}\frac{G^{p+1}}{(1+s)^{d+p}}ds\leq C\gamma
^{p+1}\frac{G^{p+1}}{(1+t)^{d+p}}.
\end{equation*}%
On the other hand, the integral from $0$ to $t/2$ is bounded more simply by 
\begin{equation*}
\int_{0}^{t/2}e^{-B_{0}(t-s)}C\gamma ^{p+1}ds\leq \frac{C\gamma ^{p+1}}{B_{0}%
}e^{-B_{0}t/2}.
\end{equation*}%
Thus 
\begin{equation}
V_{\infty }-V_{W}(t)\leqslant \gamma e^{-B_{0}t}+C\gamma ^{p+1}\frac{G^{p+1}%
}{(1+t)^{d+p}}.  \label{UPPER}
\end{equation}

(iii) On the other hand, by (\ref{equation:V_inf-V_w}) we have the lower
bound 
\begin{eqnarray*}
V_{\infty }-V_{W}(t) &\geqslant &\gamma e^{-B_{\infty
}t}+\int_{0}^{t}e^{-B_{\infty }(t-s)}R_W(s)ds\geq \gamma e^{-B_{\infty
}t}+CH^{p+1}\gamma ^{p+1}\int_{t_{0}}^{t}e^{-B_{\infty }(t-s)}s^{-d-p}ds \\
&\geqslant &\gamma e^{-B_{\infty }t}+CH^{p+1}\gamma ^{p+1}\frac{%
1-e^{-B_{\infty }(t-t_{0})}}{B_{\infty }}t^{-d-p}
\end{eqnarray*}%
by \eqref{lowerR}. Now for $t\geq 2t_{0}$, we have $1-e^{-B_{\infty
}(t-t_{0})}\geq 1-e^{-B_{\infty }t_{0}}>\frac{1}{2}$ for large $t_{0}$
(small $\gamma $). Thus we have the desired lower bound 
\begin{equation}
V_{\infty }-V_{W}(t)\geqslant \gamma e^{-B_{\infty }t}+CH^{p+1}\gamma
^{p+1}t^{-d-p}\chi _{(2t_{0},\infty )}\left( t\right)  \label{LOWER}
\end{equation}%
with a different constant $C$.
\end{proof}


By (\ref{g and h}) we summarize the requirements on $g$ and $h$ as follows.

\begin{corollary}
\label{requirements} $V_{W}(t)\in \mathcal{W}$ provided the following
conditions are satisfied. 
\begin{eqnarray}
\gamma e^{-B_{0}t}+C\gamma ^{p+1}\frac{G^{p+1}}{(1+t)^{d+p}} &<&\gamma g(t),
\label{upper} \\
\gamma e^{-B_{\infty }t}+CH^{p+1}\gamma ^{p+1}t^{-d-p}\chi _{(2t_{0},\infty
)}\left( t\right) &>&\gamma h(t),  \label{lower}
\end{eqnarray}%
\begin{equation}
2t_{0}\geq \max \{ (2G/H)^{1/\left( M-1\right) },4^q \left( 2^{M}G/H\right)
^{q(p+1)} \} ,  \label{bound on t_0}
\end{equation}%
\begin{equation}
g(t)\leq G(1+t)^{-M}\text{ with }M>\frac{p+d}{p+1}  \label{G}
\end{equation}
\end{corollary}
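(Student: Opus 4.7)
The plan is to recognize that essentially all the analytic work has already been done in Lemma \ref{Prop:DeducingConditionsOnhAndg} and Lemma \ref{Corollary:UpperAndLowerBoundOfR}, so the proof reduces to checking that the four conditions of the corollary let us invoke those lemmas and then directly yield the three defining properties of membership in $\mathcal{W}$ listed in Definition \ref{def:W}.

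First I would verify that the hypotheses of the preceding lemmas hold. Condition \eqref{G} is exactly hypothesis \eqref{Gbound} of Lemma \ref{Corollary:UpperAndLowerBoundOfR}, which is in turn assumed in Lemma \ref{Prop:DeducingConditionsOnhAndg}; tacitly one is also assuming $g$ non-increasing, as in Lemma \ref{Corollary:UpperAndLowerBoundOfR}. Condition \eqref{bound on t_0} ensures $2t_{0}\geq \max\{(2G/H)^{1/(M-1)},\,4^{q}(2^{M}G/H)^{q(p+1)}\}$, so that the lower bound \eqref{lowerR} on $R_{W}(t)$ is valid on the entire interval $[2t_{0},\infty)$ where it is used in part (iii) of Lemma \ref{Prop:DeducingConditionsOnhAndg}. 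With these hypotheses in place we have the differentiability of $V_{W}$ with bounded derivative (hence Lipschitz continuity), the initial value $V_{W}(0)=V_{0}$ from integrating \eqref{iteration equation}, and the monotonicity of $V_{W}$ on $[0,t_{0}]$, which together verify items (i) and (ii) of Definition \ref{def:W}.

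Next I would verify item (iii) of Definition \ref{def:W} by chaining the bounds of Lemma \ref{Prop:DeducingConditionsOnhAndg} with hypotheses \eqref{upper} and \eqref{lower}. For the upper bound, part (ii) of that lemma gives
\begin{equation*}
V_{\infty}-V_{W}(t)\leq \gamma e^{-B_{0}t}+C\gamma^{p+1}\frac{G^{p+1}}{(1+t)^{d+p}},
\end{equation*}
and this is strictly less than $\gamma g(t)$ by \eqref{upper}. For the lower bound, part (iii) gives
\begin{equation*}
V_{\infty}-V_{W}(t)\geq \gamma e^{-B_{\infty}t}+CH^{p+1}\gamma^{p+1}t^{-d-p}\chi_{(2t_{0},\infty)}(t),
\end{equation*}
which is strictly greater than $\gamma h(t)$ by \eqref{lower}. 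Since $\gamma h(t)>0$ is built into the statement of \eqref{lower} (and into the definition of $\mathcal{W}$), this yields the two-sided inequality
\begin{equation*}
0<\gamma h(t,\gamma)\leq V_{\infty}-V_{W}(t)<\gamma g(t,\gamma),
\end{equation*}
which is precisely \eqref{g and h} for $V_{W}$. Combining all three items, $V_{W}\in\mathcal{W}$.

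There is no real analytic obstacle here; the only subtlety is the bookkeeping that tracks where each inequality in the hypotheses is used. The genuine difficulty was absorbed into Lemma \ref{Corollary:UpperAndLowerBoundOfR}, where one had to balance the two competing contributions in the upper and lower bounds of $R_{W}$ to extract a clean $t^{-(d+p)}$ rate, and into the threshold choice of $t_{0}$ in Lemma \ref{Prop:DeducingConditionsOnhAndg} that makes the exponential initial transient dominate the polynomial tail of $R_{W}$ for $t\leq t_{0}$ while reversing roles for $t\geq 2t_{0}$. The corollary itself is really a packaging statement to feed into the fixed-point argument of Section 7.
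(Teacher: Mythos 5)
Your proof is correct and matches the paper's (implicit) approach: the paper presents this corollary without a separate proof, as an immediate repackaging of Lemma \ref{Prop:DeducingConditionsOnhAndg} against Definition \ref{def:W}, and you fill in exactly that chain of checks, correctly tracing \eqref{G} back to \eqref{Gbound}, \eqref{bound on t_0} to the validity range of \eqref{lowerR}, and \eqref{upper}, \eqref{lower} to the sandwich \eqref{g and h}.
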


\begin{corollary}
\label{ghSpecific} \label{Corollary:Choosing g and h} One can choose
constants $A_{+}$ and $A_{-}$ so that the pair 
\begin{eqnarray*}
g(t) &=&e^{-B_{0}t}+\frac{\gamma ^{p}A_{+}}{\left( 1+t\right) ^{d+p}} \\
h(t) &=&e^{-B_{\infty }t}+\frac{\gamma ^{p}A_{-}}{t^{d+p}}\chi
_{(2t_{0},\infty )}\left( t\right)
\end{eqnarray*}%
satisfies the conditions of Corollary \ref{requirements}.
\end{corollary}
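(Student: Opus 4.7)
The plan is to substitute the proposed formulas for $g$ and $h$ directly into the four conditions of Corollary \ref{requirements} and extract simple algebraic requirements on $A_+$ and $A_-$. First I would verify \eqref{G}. Since $(1+t)^{M}e^{-B_0 t}$ is bounded uniformly in $t\ge 0$ by a constant $C_1 = C_1(B_0,d,p,M)$, choosing an $M$ with $(p+d)/(p+1) < M \le d+p$ gives
\begin{equation*}
g(t) \;\le\; \frac{C_1 + \gamma^p A_+}{(1+t)^{M}},
\end{equation*}
so we may take $G := C_1 + \gamma^p A_+$; for small $\gamma$ this is essentially the fixed constant $C_1$.

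Next, divide \eqref{upper} and \eqref{lower} by $\gamma$. Condition \eqref{upper} reduces, after canceling the matching $e^{-B_0 t}$ terms, to the $t$-independent requirement $C\, G^{p+1} < A_+$; condition \eqref{lower}, after canceling the $e^{-B_\infty t}$ terms and noting that the $\chi_{(2t_0,\infty)}$-factors coincide, reduces to $C_2\, H^{p+1} > A_-$. Here the crucial observation is that $t_0 = (2B_\infty)^{-1}\log(B_0/\gamma^p) \to \infty$ as $\gamma \to 0$, so for small $\gamma$ we have $2t_0 > 1$ and hence $h(s) = e^{-B_\infty s}$ on $[0,1]$; therefore
\begin{equation*}
H \;=\; \int_0^1 e^{-B_\infty s}\,ds \;=\; \frac{1 - e^{-B_\infty}}{B_\infty}
\end{equation*}
is a definite positive constant independent of $A_-$. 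So I would first pick any $A_-$ with $A_- < C_2 H^{p+1}$, and then pick $A_+$ larger than $C\, C_1^{p+1}$. For $\gamma$ sufficiently small the perturbation $\gamma^p A_+$ in $G$ is negligible, so $C(C_1+\gamma^p A_+)^{p+1} < A_+$ continues to hold.

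Finally, \eqref{bound on t_0} is automatic once $\gamma$ is small: its right-hand side is built from $G$ and $H$, both bounded quantities in our setting, whereas $t_0 \to \infty$. The only delicate point is the apparently circular relationship in \eqref{upper} between $A_+$ and $G$, since $G$ itself depends on $A_+$. This is resolved because the dependence is only through $\gamma^p A_+$, which can be driven to zero by shrinking $\gamma$ \emph{after} $A_+$ has been fixed. In effect, the free parameters are chosen in the order $A_-$, then $A_+$, then $\gamma$, each step removing one of the couplings. The main obstacle is merely keeping track of this ordering; no new estimate beyond Lemmas \ref{Lemma:Upper and lower bound of R+}, \ref{Lemma:UpperBoundOfR-} and \ref{Corollary:UpperAndLowerBoundOfR} is required.
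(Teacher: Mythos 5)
Your proposal is correct and follows essentially the same route as the paper: cancel the matching exponential terms in \eqref{upper} and \eqref{lower} to reduce them to the $t$-free conditions $C\,G^{p+1}<A_+$ and $A_-<C\,H^{p+1}$, compute $H=(1-e^{-B_\infty})/B_\infty$ using that $2t_0>1$ for small $\gamma$, and observe that \eqref{bound on t_0} holds since $t_0\to\infty$ as $\gamma\to 0$. Your choice $G=C_1+\gamma^p A_+$ with $C_1=\sup_t (1+t)^M e^{-B_0 t}$ is in fact slightly more careful than the paper's stated $G=1+\gamma A_+$, which as written would not bound $e^{-B_0 t}(1+t)^M$ when $M>B_0$; the self-reference of $G$ on $A_+$ is handled identically in both arguments by fixing $A_+$ first and then shrinking $\gamma$.
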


\begin{proof}[Proof of Corollary \protect\ref{Corollary:Choosing g and h}]
Notice for (\ref{G}) we can take $G=1+\gamma A_{+}$. For (\ref{upper}) we
require 
\begin{equation*}
C\gamma ^{p}\frac{G^{p+1}}{(1+t)^{d+p}}<\frac{\gamma ^{p}A_{+}}{\left(
1+t\right) ^{d+p}},
\end{equation*}%
which is true provided $A_{+}>2C$ and $\gamma $ is sufficiently small.
Notice that $H=\int_{0}^{1}h(s)ds=\int_{0}^{1}\exp {(-B_{\infty }s)}%
ds=(1-\exp (-B_{\infty })/B_{\infty }>0$. For (\ref{lower}) we require 
\begin{equation*}
\frac{CH^{p+1}\gamma ^{p}}{t^{d+p}}\chi _{(2t_{0},\infty )}\left( t\right) >%
\frac{\gamma ^{p}A_{-}}{t^{d+p}}\chi _{(2t_{0},\infty )}(t),
\end{equation*}%
which merely requires $A_{-}<CH^{p+1}$. Finally, (\ref{bound on t_0}) is
true for small $\gamma $ because $t_{0}\rightarrow \infty $ as $\gamma
\rightarrow 0$.
\end{proof}


\section{Proof of Existence and Asymptotic Behavior}

\begin{proof}[Proof of Theorem \protect\ref{ThExistence}]
As in Definition \ref{def:W}, $\mathcal{W}$ is defined as the set of all
Lipschitz functions $W(t)$ on the half line $0\leq t<\infty $,
non-decreasing in $[0,t_{0}]$, such that 
\begin{equation*}
W(0)=V_{0},\quad \lim_{t\rightarrow \infty }W(t)=V_{\infty },\quad 0<\gamma
h(t)\leq V_{\infty }-W(t)<\gamma g(t).
\end{equation*}%
We define the \textquotedblleft ball" of radius $L$ in $\mathcal{W}$ as 
\begin{equation*}
\mathcal{K}=\{W\in \mathcal{W}\ \Big |\text{esssup}_{0\leqslant t<\infty
}(|W(t)|+|\dot{W}(t)|)\leq L\}
\end{equation*}%
for any positive constant $L$. Define $C_{b}([0,\infty ))$ to be the space
of continuous bounded functions on $[0,\infty )$. Of course, $\mathcal{K}$
is a compact and convex subset of $C_{b}([0,\infty ))$.

Given $W\in \mathcal{K}$, recall that $V_{W}$ is defined as the solution of
the differential equation 
\begin{equation}
\dot{V}_{W}=E-F_{0}(V_{W})-R_{W}(t),\quad V_{W}(t)=V_{0}.  \label{diffeqn}
\end{equation}%
Keeping in mind that $\left\vert R_{W}(t)\right\vert \leqslant C\gamma ^{p+1}
$ according to Lemma \ref{Corollary:UpperAndLowerBoundOfR}, we choose 
\begin{equation*}
L=\max \{V_{\infty },E+F_{0}(V_{\infty })+C\gamma ^{p+1}\}.
\end{equation*}%
We then consider the mapping $\mathcal{A}:W\rightarrow V_{W}$. By Corollary %
\ref{requirements}, $\mathcal{A}$ maps $\mathcal{K}$ into $\mathcal{K}$. By
Lemma \ref{Lemma:continuity of the map} below, $\mathcal{A}$ is continuous
in the topology of $C_{b}([0,\infty ))$, that is, with respect to uniform
convergence. By the Schauder fixed point theorem, $\mathcal{A}$ has a fixed
point in $\mathcal{K}$, which is our desired solution. Hence we have
concluded the proof of Theorem \ref{ThExistence}.
\end{proof}

\begin{lemma}
\label{Lemma:continuity of the map}The mapping $\mathcal{A}$ is continuous
in the topology of $C_{b}([0,\infty ))$.
\end{lemma}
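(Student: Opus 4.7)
The plan is to reduce uniform continuity of $\mathcal{A}$ to pointwise convergence $R_{W_n}(s)\to R_W(s)$ via a Gronwall-type estimate that exploits the strict monotonicity $F_0'\geq B_0>0$ proved in Lemma \ref{fictitious lemma}. Taking $W_n\to W$ uniformly in $\mathcal{K}$, subtracting the two copies of \eqref{iteration equation} and applying the mean value theorem to $F_0$, I obtain
$$\frac{d}{dt}(V_{W_n}-V_W)=-\xi_n(t)(V_{W_n}-V_W)-(R_{W_n}(t)-R_W(t)),$$
with $\xi_n(t)\in[B_0,B_\infty]$. Since both solutions start at $V_0$, the integrating-factor formula and $\xi_n\geq B_0$ give
$$|V_{W_n}(t)-V_W(t)|\leq\int_0^t e^{-B_0(t-s)}\,|R_{W_n}(s)-R_W(s)|\,ds.$$

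To turn this into a sup estimate, I combine a tail bound with dominated convergence on a compact interval. Lemma \ref{Corollary:UpperAndLowerBoundOfR} supplies the $W$-uniform bound $|R_W(s)|\leq C\gamma^{p+1}G^{p+1}(1+s)^{-d-p}$, which is integrable since $d+p>1$. Given $\varepsilon>0$, I pick $T$ so large that $2C\gamma^{p+1}G^{p+1}\int_T^\infty (1+s)^{-d-p}\,ds<\varepsilon/2$; the contribution to the Gronwall integral from $s\in[T,t]$ is then below $\varepsilon/2$ uniformly in $n$ and $t$. On $[0,T]$ I use $e^{-B_0(t-s)}\leq 1$, so the remaining task is to show $R_{W_n}\to R_W$ in $L^1([0,T])$, which will hold once I have pointwise convergence plus the same integrable majorant.

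The heart of the argument is therefore to verify $R_{W_n}(s)\to R_W(s)$ for a.e.\ $s$. By \eqref{equation:f- with precollision}, the integrands of $r_W^R$ and $r_W^L$ collapse onto the precollision set and equal $\ell(u_x-W(s))\bigl(f_+(\tau_W,\mathbf{\xi}_W;\mathbf{u})-f_0(\mathbf{u})\bigr)\chi_1(s,\mathbf{u})$, where the precollision time $\tau_W(s,\mathbf{x},\mathbf{u})$ is determined implicitly by $(s-\tau)u_x=\int_\tau^s W(r)\,dr$. Because every $W\in\mathcal{K}$ is strictly increasing on $[0,t_0]$ with uniformly bounded Lipschitz constant, the right-hand side is strictly monotone in $\tau$ away from a measure-zero grazing set, and the implicit function theorem yields $\tau_{W_n}\to\tau_W$ and $\mathbf{\xi}_{W_n}\to\mathbf{\xi}_W$. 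Unwrapping $f_+(\tau,\mathbf{\xi};\mathbf{u})$ once via the boundary conditions \eqref{boundary condition: Right}, \eqref{boundary condition: Left} and using continuity of $k$, $a_0$, $b$, $\ell$ together with the $W$-uniform bound \eqref{bound:upper bound for f+} on $a_+$ then yields pointwise convergence of the integrand, and dominated convergence closes the loop. The main obstacle is the behavior near grazing velocities $u_x\in\{\langle W\rangle_s,\sup_{\sigma<s}\langle W\rangle_{\sigma,s}\}$, where $\partial_\tau[(s-\tau)u_x-\int_\tau^s W]$ vanishes and the implicit map is singular; since this set has measure zero in the relevant integration, a direct Egorov-type splitting removes a small neighborhood of it without affecting the limit.
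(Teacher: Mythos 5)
Your reduction to convergence of $R_{W_n}$ via the linear ODE and an integrating factor is fine, and your use of the $W$-uniform decay $|R_W(s)|\le C\gamma^{p+1}G^{p+1}(1+s)^{-d-p}$ to dispose of the large-time tail matches the paper's treatment of that piece. The genuine gap is in the claim that on a compact interval $[0,T]$ a single unwrap of $f_+$ at the first precollision, together with the uniform bound \eqref{bound:upper bound for f+} on $a_+$, yields pointwise convergence $R_{W_n}(s)\to R_W(s)$. The quantity $f_+(\tau_W,\mathbf{\xi}_W;\mathbf{u})$ is defined \emph{recursively}: plugging in the boundary condition expresses it through $f_-(\tau_W,\mathbf{\xi}_W;\cdot)$, which by \eqref{equation:f- with precollision} again involves $f_+$ at a still earlier precollision, and a trajectory passing through $(s,\mathbf{x},\mathbf{u})$ may have collided an unbounded number of times in $[0,s]$. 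A uniform bound on $a_+$ gives you domination but says nothing about $a_+(\tau_{W_n},\mathbf{\xi}_{W_n};u_x)\to a_+(\tau_W,\mathbf{\xi}_W;u_x)$; to show that, you would have to propagate the continuity argument through the entire chain of precollisions, which has no \emph{a priori} finite length. Stopping after one unwrap does not close the loop.

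The paper's essential device, which your proposal omits, is the decomposition of $R_W$ according to the number $N$ of collisions in $[0,T]$: iterating the single-collision estimate $N$ times gives $\sup_t |R_W(t;B_W^N)|\le (C\gamma^{p+1})^{N}$, so for $\gamma$ small enough that $C\gamma^{p+1}<1$ the contribution of trajectories colliding more than $N$ times is negligible, \emph{uniformly} in $W\in\mathcal{K}$. Only after this truncation is one left with a finite composition of integrals in which $W_j$ appears in finitely many places, at which point the elementary continuity argument you are trying to run actually applies. Without this $N$-collision cut-off (and the attendant smallness condition on $\gamma$), the pointwise convergence step is not justified. A secondary issue: your implicit-function-theorem argument invokes strict monotonicity of $W$, but elements of $\mathcal{W}$ are only required to be strictly increasing on $[0,t_0]$; for precollision times $\tau>t_0$ the derivative $W(\tau)-\langle W\rangle_{\tau,s}$ of the defining relation need not be bounded away from zero, so the smooth dependence $\tau_{W_n}\to\tau_W$ is not automatic there.
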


\begin{proof}
Let $W_{j}\rightarrow W$ in $C_{b}([0,\infty ))$ where each $W_{j}\in 
\mathcal{K}$. By \eqref{diffeqn} it suffices to prove that $%
R_{W_{j}}(t)\rightarrow R_{W}(t)$ uniformly in $[0,\infty )$. Fix any $T>0$.
For any ${j}$ and any $N$, we define $B_{W_{j}}^{N}=\{(x,v_{x}):\exists \ $%
trajectory passing through $(T,x,v_{x})$ which has collided at least $N+1$
times in $[0,T]\}.$ Its complement is $A_{W_{j}}^{N}=\{(x,v_{x})$: no
trajectory passing through $(T,x,v_{x})$ has collided more than $N$ times in 
$[0,T]\}.$ We can then write $R_{W_{j}}\left( t\right) $ as a sum of
contributions from $A_{W_{j}}^{N}$ and $B_{W_{j}}^{N}$, namely, 
\begin{equation*}
R_{W_{j}}\left( t\right) =R_{W_{j}}\left( t;A_{W_{j}}^{N}\right)
+R_{W_{j}}\left( t;B_{W_{j}}^{N}\right) .
\end{equation*}%
Taking account of only the first precollision of each particle, we proved in
Lemmas \ref{Lemma:Upper and lower bound of R+} and \ref{Lemma:UpperBoundOfR-}
that 
\begin{equation}
0\leqslant R_{W}\left( t\right) \leqslant \frac{C\gamma ^{p+1}}{\left(
1+t\right) ^{p+d}}  \label{estimate:R estimate 1 in continuity}
\end{equation}%
with $C$ independent of both $t\in\mathbb{R}$ and $W\in\mathcal{W}$.
Iterating the same argument $N$ times, we have 
\begin{equation}
\sup_{0\leqslant t<\infty }\left\vert R_{W}\left( t;B_{W}^{N}\right)
\right\vert \leqslant \left( C\gamma ^{p+1}\right) ^{N}.
\label{estimate:R estimate 2 in continuity}
\end{equation}%
Thus it is natural to choose $\gamma <C^{-\frac{1}{p+1}}.$

Now we may write 
\begin{eqnarray*}
\sup_{0\leqslant t<\infty }\left\vert R_{W_{j}}\left( t\right) -R_{W}\left(
t\right) \right\vert &\leqslant &\sup_{T\leqslant t<\infty }\left\vert
R_{W_{j}}\left( t\right) -R_{W}\left( t\right) \right\vert +\sup_{0\leqslant
t<T}\left\vert R_{W}\left( t;B_{W}^{N}\right) \right\vert +\sup_{0\leqslant
t<T}\left\vert R_{W_{j}}\left( t;B_{W_{j}}^{N}\right) \right\vert \\
&&+\sup_{0\leqslant t<T}\left\vert R_{W_{j}}\left( t;A_{W_{j}}^{N}\right)
-R_{W}\left( t;A_{W}^{N}\right) \right\vert \\
&=&I+II+III+IV.
\end{eqnarray*}%
Let $\varepsilon >0$. By estimate (\ref{estimate:R estimate 1 in continuity}%
), we may choose $T=T_{\varepsilon }$ so large that $\left\vert I\right\vert
<\varepsilon /3.$ By estimate (\ref{estimate:R estimate 2 in continuity}),
we may then choose $N=N_{\varepsilon }$ so large that%
\begin{equation*}
\left\vert II\right\vert +\left\vert III\right\vert \leqslant 2\left(
C\gamma ^{p+1}\right) ^{N+1}<\varepsilon /3.
\end{equation*}%
In $IV$, there are no more than $N$ collisions. Therefore we can express
both terms in $IV$ as iterates of $N$ integrals by repeated use of the
collision boundary condition. The resulting finite number of iterated
integrals contain $W_{j}$ in a finite number of places. Therefore they
converge as $j\rightarrow \infty $ to the same expression with $W_{j}$
replaced by $W$. Thus we can choose $j$ so large that $\left\vert
IV\right\vert <\varepsilon /3$. Therefore $R_{W_{j}}(t)\rightarrow R_{W}(t)$
in $C_{b}([0,\infty ))$. Hence $\mathcal{A}$ is continuous in the topology
of $C_{b}([0,\infty ))$.
\end{proof}

\begin{proof}[Proof of Theorem \protect\ref{ThEvery}]
Let $\left( V(t),f(t,\mathbf{x},\mathbf{v})\right) $ be a solution of the
problem in the sense of Theorem \ref{ThExistence}. Then $V$ is a fixed point
of $\mathcal{A}$ so that $V\in\mathcal{W}$ and Corollary \ref{requirements}
is valid for it. Letting $g(t)$ and $h(t)$ be given by Corollary \ref%
{ghSpecific}, we then have 
\begin{equation*}
\gamma h(0)=\gamma =V_{\infty }-V(0)<\gamma g(0),
\end{equation*}%
so that $V_{\infty }-V(t)<\gamma g(t)$ for small enough $t$. Furthermore, 
\begin{equation*}
\frac{dV}{dt}\Big |_{t=0}=F_{0}(V_{\infty })-F(V_{0})=F_{0}(V_{\infty
})-F_{0}(V_{\infty }-\gamma )<\gamma \max_{V\in \left[ V_{0},V_{\infty }%
\right] }F_{0}^{\prime }(V)=\gamma B_{\infty },
\end{equation*}%
so that $V_{\infty }-V(t)>\gamma e^{-B_{\infty }t}=\gamma h(t)$ at least for
small enough $t>0$. Let 
\begin{equation*}
T=\inf \{s\ \Big |\ \gamma h(t)<V_{\infty }-V(t)<\gamma g(t),\ \forall \
0<t<s\}\leq \infty .
\end{equation*}%
In the interval $(0,T)$ the inequalities \eqref{UPPER} and \eqref{LOWER} are
satisfied. If $T$ were finite, we would have 
\begin{equation*}
V_{\infty }-V(T) = \gamma h(T) \ \ \text{ or }\ \ V_{\infty }-V(T) =\gamma
g(T),
\end{equation*}%
contradicting Corollary \ref{requirements}. Hence $T=\infty $. This proves
Theorem \ref{ThEvery}.
\end{proof}


\end{document}